 \title[Endomorphism algebras in negative CY categories]{Endomorphism algebras for a class of negative Calabi-Yau categories}
\author{Raquel Coelho Sim\~oes}
\email{rcoelhosimoes@campus.ul.pt}
\author{Mark James Parsons}
\email{markjamesparsons@googlemail.com}
\newcommand{\harxiv}[1]{ \href{http://arxiv.org/abs/#1}{\texttt{arXiv:#1}}}
\newcommand{\hyref}[2]{ \hyperref[#2]{#1~\ref*{#2}} }
\theoremstyle{plain}
\newtheorem{theorem}{Theorem}[section]
\newtheorem{lemma}[theorem]{Lemma}
\newtheorem{corollary}[theorem]{Corollary}
\newtheorem{proposition}[theorem]{Proposition}
\theoremstyle{definition}
\newtheorem{remark}[theorem]{Remark}
\newtheorem{example}[theorem]{Example}
\newtheorem*{definition}{Definition} 
\newtheorem*{definitions}{Definitions} 
\newtheorem*{notation}{Notation}
\newcommand{\Case}[1]{\medskip\noindent\emph{Case #1:}}
\DeclareMathAlphabet{\mathpzc}{OT1}{pzc}{m}{it}
\newcommand{\cF}{\mathscr{F}}
\newcommand{\sB}{\mathsf{B}}
\newcommand{\sD}{\mathsf{D}}
\newcommand{\sG}{\mathsf{G}}
\newcommand{\sI}{\mathsf{I}}
\newcommand{\sR}{\mathsf{R}}
\newcommand{\sT}{\mathsf{T}}
\newcommand{\Db}{\mathsf{D}^b}
\newcommand{\bN}{\mathbb{N}}
\newcommand{\mcC}{\mathcal{C}}
\newcommand{\mcP}{\mathcal{P}}
\newcommand{\mcR}{\mathcal{R}}
\newcommand{\mcT}{\mathcal{T}}
\renewcommand{\geq}{\geqslant}
\renewcommand{\leq}{\leqs}
\renewcommand{\phi}{\varphi}
\renewcommand{\epsilon}{\varepsilon}
\DeclareMathOperator{\Ext}{\mathsf{Ext}}
\DeclareMathOperator{\End}{{\mathsf{End}}}
\newcommand{\ind}[1]{\mathsf{ind}(#1)}
\newcommand{\kk}{{\mathbf{k}}}
\newcommand{\leqs}{\leqslant}
\newcommand{\arc}[1]{\mathtt{#1}}
\newcommand{\arup}{\ar@/^/[u]}
\newcommand{\ardn}{\ar@/^/[d]}
\newcommand{\coloneqq}{\mathrel{\mathop:}=}
\begin{document}

\begin{abstract}
We consider an orbit category of the bounded derived category of a path algebra of type $A_n$ which can be viewed as a $-(m+1)$-cluster category, for $m\geq 1$. In particular, we give a characterisation of those maximal $m$-rigid objects whose endomorphism algebras are connected, and then use it to explicitly study these algebras. Specifically, we give a full description of them in terms of quivers and relations, and relate them with (higher) cluster-tilted algebras of type $A$. As a by-product, we introduce a larger class of algebras, called {\it tiling algebras}.
\end{abstract}

\keywords{AG-invariant; cluster-tilted algebras; cuts; endomorphism algebras; gentle algebras; maximal rigid objects; orbit categories of the derived category; tilings.}

\subjclass[2010]{Primary: 05E10, 16G20, 16G70, 18E30; Secondary: 05C10}

\maketitle

\addtocontents{toc}{\protect{\setcounter{tocdepth}{-1}}}  
\section*{Introduction} 
\addtocontents{toc}{\protect{\setcounter{tocdepth}{1}}}   

In \cite{BMRRT}, and independently in \cite{CCS} in type A, the authors introduced cluster categories as a categorical model of cluster algebras. These categories led to the development of so-called cluster-tilting theory, an important generalisation of classical tilting theory, and are therefore of central importance in representation theory.

The cluster category of an acyclic quiver $Q$ is defined to be the $\tau^{-1} \Sigma$-orbit category of the bounded derived category $\sD^b (\kk Q)$ of the corresponding path algebra $\kk Q$, where $\tau$ is the Auslander-Reiten translate and $\Sigma$ is the suspension functor in $\sD^b (\kk Q)$. In \cite{Thomas}, Thomas introduced a generalisation of cluster categories, the so-called $m$-cluster categories, for $m \geq 1$. These are the $\tau^{-1} \Sigma^m$-orbit categories of $\sD^b (\kk Q)$. A key property of these categories, which inspired several authors to study cluster-tilting theory in a more general set-up, is that they are $(m+1)$-Calabi-Yau triangulated categories. 

In the present article, we will consider the orbit categories $\sB_m (A_n)$ of $\sD^b (\kk A_n)$ by $\tau \Sigma^{m+1}$, for $m \geq 1$, where $\kk A_n$ is a path algebra of type $A_n$. These categories can be viewed as $-(m+1)$-cluster categories and $(-m)$-Calabi-Yau ($\Sigma^{-m}$ is the Serre functor). In particular, they can be considered to be of negative Calabi-Yau `dimension'. Further reasons to support this idea can be found in \cite{CS13} and \cite{CSP}.

The main interest in $m$-cluster categories, and other positive CY-triangulated  categories, has arisen from the nice homological and combinatorial properties of $m$-cluster-tilting objects and their corresponding endomorphism algebras. In the acyclic case, $m$-cluster-tilting objects coincide with maximal $m$-rigid objects: objects which are maximal with respect to the property that $\Ext^i (t,t^\prime) = 0$, for every pair of summands $t, t^\prime$ and for all $1 \leq i \leq m$. 

In this article, we will study a subclass of the maximal $m$-rigid objects of $\sB_m (A_n)$, namely those whose endomorphism algebras are connected. We note that, when $m = 1$, this is the whole class of maximal $m$-rigid objects.

Cluster-tilting theory has also led to an interest in associating combinatorial models to triangulated categories, in particular the (higher-) cluster categories. These models facilitate the provision of simple characterisations of several representation-theoretic objects, which are more tractable than the objects themselves. For instance, $m$-cluster-tilting objects in type $A_n$ can be simply described via $(m+2)$-angulations of an $(m (n+1)+2)$-gon. Note that when at least one of the $(m+2)$-gons in an $(m+2)$-angulation has two disjoint boundary segments, the corresponding $m$-cluster-tilted algebra is disconnected. 

We will also make use of a combinatorial model for $\sB_m (A_n)$. This model, in which indecomposable objects correspond to the `$(m+1)$-diagonals' of an $((m+1)(n+1)-2)$-gon was introduced in \cite{CS13}. We note that, in the case when $m=1$, maximal rigid objects were characterised in \cite{CS11} using a different combinatorial model, in terms of oriented diagonals in an $n$-gon. The characterisation (Theorem~\ref{thm:maximalmrigid}) we present in this article is not only more general, it also has a simpler description, which enables us to develop a deeper understanding of the corresponding endomorphism algebras. 

The collection of $(m+1)$-diagonals (viewed inside the polygon) corresponding to a maximal $m$-rigid object in $\sB_m (A_n)$ can `admit crossings' or `contain regions bounded by disjoint boundary segments'; in which case the associated endomorphism algebra is disconnected. The behaviour of such collections of $(m+1)$-diagonals is not as neat, and so we will restrict to the connected case. The connected endomorphism algebras can be realised as {\it tiling algebras}, whose notion we introduce in this paper. Tiling algebras arise from {\it tilings} of a disc, which can be seen as polygon dissections, where each subpolygon is referred to as a {\it tile}. 

We will prove that tiling algebras are precisely those gentle algebras for which cycles are oriented and full of relations (Proposition \ref{prop:tilinggentle}). We will then give a complete description of the endomorphism algebras of connected maximal $m$-rigid objects in $\sB_m(A_n)$ (Theorem~\ref{thm:endalg}), by giving a list of conditions on the so-called `permitted' and `forbidden' paths of their quivers. These results make use of a tool we call the {\it tiling algorithm} which associates a tiling to a gentle algebra whose cycles are oriented and full of relations. It is interesting to note that this algorithm can be seen as a `dual' version of an algorithm given in \cite{S14}, which associates a Brauer graph to a gentle algebra. 

We would also like to point out that a tiling can be viewed as a partial triangulation of a polygon. Consequently, the tiling algebras, which generalise the notion of surface algebras (in the case of a disc) introduced in \cite{DRS}, are precisely the endomorphism algebras of partial cluster-tilting objects of type $A$. Other algebras associated to partial triangulations have recently been studied in \cite{Demonet}. 

The $k$-cluster-tilted algebras, $k \geq 1$, of type $A$ are an obvious example of tiling algebras. We examine the relationship between them and the connected endomorphism algebras of maximal $m$-rigid objects ($m \geq 1$) in $\sB_m (A_n)$ in Section~\ref{sec:clustertilted}. Then, to illustrate how amenable to computation tiling algebras are, we finish this article by computing the Gorenstein dimension and the AG-invariant (which gives a necessary condition for derived equivalence) of an arbitrary tiling algebra.

\section{Background} \label{sec:background}

Let $\kk$ be an algebraically closed field, $n, m \in \bN$ and $\Db (\kk A_n)$ be the bounded derived category of a path algebra of type $A_n$. Let $\tau$ be the AR-translate on $\Db (\kk A_n)$ and $\Sigma$ the shift functor. In this paper we will consider the orbit category $\sB_m (A_n)$ of $\Db (\kk A_n)$ by $\tau \Sigma^{m+1}$. 

\subsection{A combinatorial model for $\sB_m (A_n)$.} In \cite{CS13} a combinatorial model for $\sB_m (A_n)$, which will be used in this paper, was introduced. 

We shall now recall the description of the combinatorial model in \cite{CS13}, for the convenience of the reader. Let $\mcP_{n,m}$ be the regular $((m+1)(n+1)-2)$-gon, with vertices numbered clockwise from $1$ to $(m+1)(n+1)-2$. All operations on vertices of $\mcP_{n,m}$ will be performed modulo $(m+1)(n+1)-2$, with representatives $1, \ldots, (m+1)(n+1)-2$. An \emph{$(m+1)$-diagonal} of $\mcP_{n,m}$ is a diagonal that divides $\mcP_{n,m}$ into two polygons each of whose number of vertices is divisible by $m+1$. An $(m+1)$-diagonal linking vertices $i, j$ of $\mcP_{n,m}$ will be denoted by $\{i,j\}$. The $(m+1)$-diagonals of the form $\{i,i+m\}$ are called {\it short diagonals}. The non-short diagonals are called {\it long diagonals}.

Let $\Gamma (n,m)$ be the stable translation quiver whose vertices are the $(m+1)$-diagonals of $\mcP_{n,m}$ and arrows are obtained in the following way: given two $(m+1)$-diagonals $D$ and $D'$ with a vertex $i$ in common, there is an arrow from $D$ to $D'$ in $\Gamma (n,m)$ if and only if $D'$ can be obtained from $D$ by rotating clockwise $m+1$ steps around $i$. The translation automorphism $\tau: \Gamma (n,m) \to \Gamma (n,m)$ sends an $(m+1)$-diagonal $\{i,j\}$ to $\tau (\{i,j\}):= \{i-m-1, j-m-1\}$.

The AR quiver of $\sB_m (A_n)$ is equivalent to the stable translation quiver $\Gamma (n,m)$. Note that $\Sigma \{i,j\} = \{i+1,j+1\}$. 

\begin{example}
The category $\sB_2 (A_3)$ is equivalent to $\Gamma (3,2)$, which is as follows:
\[
\xymatrix@!=0.4pc{&                       & \{1,9\} \ar[dr]         &                       & \{2,4\} \ar[dr]       &                      & \{5,7\} \ar[dr]
&                      & \{8,10\} \ar[dr]       &                     & \{1,3\} \ar[dr] \\
                      & \{1,6\} \ar[dr]\ar[ur]  &                       & \{4,9\} \ar[dr]\ar[ur] &                      & \{2,7\} \ar[dr]\ar[ur] &
& \{5,10\} \ar[dr]\ar[ur] &                     & \{3,8\} \ar[dr]\ar[ur] &                          & \{1,6\} \ar[dr] \\
\{1,3\} \ar[ur] &                       & \{4,6\} \ar[ur]  &                       & \{7,9\} \ar[ur] &                      & \{2,10\} \ar[ur]
&                      & \{3,5\} \ar[ur] &                     & \{6,8\} \ar[ur] &                     & \{1,9\}}
\]
\end{example}

We shall identify (isoclasses of) indecomposable ojects in $\sB_m (A_n)$ with the corresponding $(m+1)$-diagonals, and we shall freely switch between objects and diagonals. We will use Roman typeface for the indecomposable objects of $\sB_m (A_n)$ and typewriter typeface for the corresponding diagonals.

\subsection{Combinatorial description of the Ext-hammocks.} The main objects of study in this paper are {\it maximal $m$-rigid objects}, that is, basic objects with a maximal number of indecomposable direct summands for which $\Ext^k_{\sB_m (A_n)} (x,y) = 0$, for any pair of indecomposable summands $x$ and $y$, and $k \in \{1, \ldots, m\}$. Lemma \ref{lemma:exthammockconditions}, which describes the (forward) $\Ext^i$-hammock of an indecomposable object in terms of $(m+1)$-diagonals, will be essential to our characterisation of these objects. 

Given $k \in \{ 1, \ldots, m \}$, we say that two $(m+1)$-diagonals $\arc{a}$ and $\arc{b}$ are {\it $k$-neighbours} provided they do not cross and there is some vertex $v$ incident with $\arc{a}$ such that $\arc{b}$ is incident with $v\pm k$. 

The following notation will be useful throughout the paper. 

\begin{notation}
\begin{compactenum}
\item Given the vertices $i_1, i_2, \ldots, i_k$ of $\mcP_{n}$, we write $C(i_1, i_2, \ldots, i_k)$ to mean that $i_1, i_2, \ldots, i_k, i_1$ follow each other under the clockwise circular order on the boundary of $\mcP_{n}$.
\item Given two vertices $i, j$ of $\mcP_{n,m}$, we define $[i,j]$ to be the number of vertices encountered when travelling along the boundary in the clockwise direction from $i$ to $j$, inclusive.
\end{compactenum}
\end{notation}

Note that $[i,j]$ is a multiple of $m+1$ if and only if $\{i,j\}$ is an $(m+1)$-diagonal.

\begin{lemma}\label{lemma:exthammockconditions}
Let $k \in \{1, \ldots, m\}$, $a, b \in \ind{\sB_m (A_n)}$ and $\arc{a} = \{a_1, a_2\}$, with $a_1 < a_2$, be the $(m+1)$-diagonal corresponding to $a$. Then $\Ext^k_{\sB_m (A_n)} (b,a) \ne 0$ if and only if $\arc{b}$ satisfies one of the following conditions:
\begin{compactenum}
\item $\arc{b}$ is a $k$-neighbour of $\arc{a}$ incident with $a_2 +k$,
\item $\arc{b}$ is a $k$-neighbour of $\arc{a}$ incident with $a_1 +k$, 
\item $\arc{b}$ crosses $\arc{a}$ in such a way that $\arc{b} = \{b_1, b_2\}$ with $C(a_1,b_1,a_2,b_2)$ and $[a_i,b_i] = x_i (m+1) + k$, for some $x_i \geq 1$, $i = 1, 2$. 
\item $\arc{b} = \{a_1+k,a_2+k\}$, i.e. $b = \Sigma^k a$. 
\end{compactenum} 
\end{lemma}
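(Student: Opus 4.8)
The plan is to reduce the computation of $\Ext^k_{\sB_m(A_n)}(b,a)$ to Hom-spaces in the bounded derived category $\Db(\kk A_n)$, where the hammocks are classical, and then to translate the outcome back into the polygon model of \cite{CS13}. First I would write $\Ext^k_{\sB_m(A_n)}(b,a)=\Hom_{\sB_m(A_n)}(b,\Sigma^k a)$ and unfold the orbit category as $\Hom_{\sB_m(A_n)}(b,\Sigma^k a)=\bigoplus_{\ell\in\bZ}\Hom_{\Db(\kk A_n)}\big(\tilde b,(\tau\Sigma^{m+1})^\ell\Sigma^k\tilde a\big)$, where $\tilde a,\tilde b$ are fixed lifts of $a,b$. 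Since $\kk A_n$ is hereditary, each summand vanishes unless the target lift lies in cohomological degree $0$ or $1$ relative to $\tilde b$. The functor $\tau\Sigma^{m+1}$ shifts cohomological degree by a net amount that grows with $m$, so that for $1\le k\le m$ at most one value of $\ell$ can place the target into each of degrees $0$ and $1$; the whole sum therefore collapses to a single $\Hom_{\Db}$-contribution together with a single $\Ext^1_{\Db}$-contribution.

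Second, I would recall the arc/interval description of these two classical hammocks. In $\Db(\kk A_n)$ the indecomposables are shifts of interval modules, and both $\Hom_{\Db}$ and $\Ext^1_{\Db}$ between indecomposables are at most one-dimensional, with supports given by explicit interval inequalities. Transported through the model of \cite{CS13}, the $\Hom_{\Db}$-support corresponds to $(m+1)$-diagonals that do \emph{not} cross but meet the relevant shifted endpoint, while the $\Ext^1_{\Db}$-support corresponds to $(m+1)$-diagonals that genuinely cross. This yields the basic dichotomy organising the proof: the degree-$0$ contribution is supported on non-crossing configurations and the degree-$1$ contribution on crossing ones.

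Third, I would substitute $\Sigma^k a=\{a_1+k,a_2+k\}$ and read the two contributions off geometrically, keeping all vertex operations modulo $(m+1)(n+1)-2$. The degree-$0$ contribution forces $\arc b$ to be non-crossing with $\arc a$ and to share an endpoint with $\Sigma^k\arc a$, i.e.\ to be incident with $a_2+k$ or $a_1+k$ while being a $k$-neighbour of $\arc a$; this produces conditions (1) and (2), with the degenerate case $\arc b=\{a_1+k,a_2+k\}=\Sigma^k\arc a$ giving condition (4) (here the nonzero morphism is the image of $\id_a$). The degree-$1$ contribution forces $\arc b$ to cross $\arc a$; writing $\arc b=\{b_1,b_2\}$ with $C(a_1,b_1,a_2,b_2)$ and expressing the overlap data of the underlying intervals in the polygon coordinates converts the degree-$1$ requirement into the congruences $[a_i,b_i]=x_i(m+1)+k$ with $x_i\ge 1$, which is exactly condition (3).

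I expect the main obstacle to be the bookkeeping in the third step: one must show that the congruence conditions $[a_i,b_i]\equiv k \pmod{m+1}$ together with the integrality and positivity constraints $x_i\ge1$ are precisely equivalent to the single contributing lift landing in degree $1$, and dually that the $k$-neighbour and incidence conditions capture exactly the degree-$0$ lift. This requires carefully tracking the clockwise rotation by $m+1$ that defines the arrows of $\Gamma(n,m)$, and using the bound $1\le k\le m$ both to exclude wrap-around solutions and to rule out the two degrees contributing simultaneously except in the boundary configurations. A useful internal consistency check is Serre duality, which for the Serre functor $\SSS=\Sigma^{-m}$ reads $\Hom_{\sB_m(A_n)}(b,\Sigma^k a)\cong D\Hom_{\sB_m(A_n)}(a,\Sigma^{-m-k}b)$ and provides a symmetry interchanging the roles of the two endpoints $a_1+k$ and $a_2+k$, confirming the matching of conditions (1) and (2) and pinning down the exact range of the $x_i$ in condition (3).
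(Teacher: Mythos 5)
Your proposal is correct in outline but takes a genuinely different route from the paper. The paper's proof is a short bootstrap: the case $k=1$ is precisely \cite[Corollary 10.6]{CSP}, and for $k \geq 2$ it applies that result to the pair $(b,\Sigma^{k-1}a)$, using $\Ext^k_{\sB_m (A_n)}(b,a) \cong \Ext^1_{\sB_m (A_n)}(b,\Sigma^{k-1}a)$; the only remaining work is the combinatorial dictionary, namely that $\arc{b}$ is a $1$-neighbour of $\Sigma^{k-1}\arc{a}$ if and only if it is a $k$-neighbour of $\arc{a}$ (the bound $k < m+1$ rules out the degenerate incidences, which would violate divisibility by $m+1$), and that the crossing congruences $[a_i+k-1,b_i] = x_i(m+1)+1$ transform into $[a_i,b_i] = x_i(m+1)+k$ with the crossing itself persisting (checked via inequalities such as $[b_2+1,a_1+k-2] > k-1$). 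You instead unfold the orbit-category Hom-space over all powers of $\tau\Sigma^{m+1}$ and reduce to degree-$0$ and degree-$1$ hammocks in $\Db(\kk A_n)$ --- which is, in essence, re-deriving the $k=1$ input of \cite{CSP} from scratch rather than citing it. Your collapsing argument is sound: since $\tau$ lowers cohomological degree by at most one, each application of $\tau\Sigma^{m+1}$ raises degree by $m$ or $m+1$, so for $1 \leq k \leq m$ at most one value of $\ell$ lands in each of degrees $0$ and $1$. What each approach buys: yours is self-contained and makes the negative-CY ``windowing'' visible; the paper's is short and delegates all hammock computations to the cited result.

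One caution, so that your second step does not pass as a mere recall: the claim that the $\Hom_{\Db}$-contribution is supported exactly on non-crossing diagonals meeting a shifted endpoint, while the $\Ext^1_{\Db}$-contribution is supported exactly on crossing ones, is not an off-the-shelf fact about classical hammocks. Hom- and Ext-supports between (shifted) interval modules in $\Db(\kk A_n)$ are two-dimensional regions of the AR quiver, whereas conditions (1) and (2) of the lemma cut out only one-parameter families (the thin strips in Figure \ref{fig:ext-shaded}); the truncation of those two-dimensional regions down to rays is produced precisely by the interaction of the hammocks with the fundamental domain of the $\tau\Sigma^{m+1}$-action, i.e., by the bookkeeping you defer to your third step. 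So the real content of your proof is concentrated there, and carrying it out amounts to reproducing the hammock computations behind \cite[Corollary 10.6]{CSP}. With that work done the argument closes; the Serre-duality symmetry $\Hom_{\sB_m(A_n)}(b,\Sigma^k a) \cong D\Hom_{\sB_m(A_n)}(a,\Sigma^{-m-k}b)$ is a legitimate consistency check, but it cannot substitute for that computation.
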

\begin{proof}
The case $k = 1$ is \cite[Corollary 10.6]{CSP}. For $k \geq 2$, we prove the result by applying \cite[Corollary 10.6]{CSP} to $b$ and $\Sigma^{k-1} a$. We must first check that $\arc{b}$ is a $1$-neighbour of $\Sigma^{k-1} \arc{a}$ if and only if $\arc{b}$ is a $k$-neighbour of $\arc{a}$. Suppose that $\arc{b} = \{b_1, b_2\}$ is a $1$-neighbour of $\Sigma^{k-1} \arc{a}$. We may assume that $b_1 = a_1 + k$. Then either $\arc{b}$ is a $k$-neighbour of $\arc{a}$ or $b_2 = a_2 +j$, for some $0 \leq j < k-1$. However, the latter would imply that $\arc{b}$ is not an $(m+1)$-diagonal, since $k < m+1$. Therefore, $\arc{b}$ is a $k$-neighbour of $\arc{a}$. The converse is trivial.

Now we must show that $\arc{b}$ crosses $\Sigma^{k-1} \arc{a}$ such that $[a_i + k-1,b_i] = x_i (m+1) +1$, for $i= 1, 2$ and for some $x_i \geq 1$ if and only if $\arc{b}$ crosses $\arc{a}$ such that $[a_i,b_i] = x_i (m+1) +k$, for $i = 1, 2$.

Suppose $\arc{b}$ crosses $\Sigma^{k-1} \arc{a}$ such that $[a_i + k-1,b_i] = x_i (m+1) +1$, for $i= 1, 2$ and for some $x_i \geq 1$. Since $\arc{b}$ is an $(m+1)$ diagonal, we have that $[b_1,b_2] = \ell (m+1)$, for some $\ell \geq 1$. It follows from the fact that $\arc{b}$ and $\Sigma^{k-1} \arc{a}$ cross that $\ell > k_1$ and $n+1-\ell > k_2$. Therefore, $[b_2+1,a_1+k-2] = (\ell -k_1) (m+1)-2 > m-1 \geq k-1$. Likewise, $[b_1 +1,a_2+k-2] > k-1$. Therefore, $\arc{b}$ crosses $\arc{a}$ and  $[a_i,b_i] = x_i (m+1) +k$, for $i = 1, 2$. The converse is similar.
\end{proof}

Figure \ref{fig:ext-shaded} shows where the indecomposable objects corresponding to the arcs that satisfy the conditions in Lemma \ref{lemma:exthammockconditions} lie in the AR quiver.

\begin{figure}[!ht]

\begin{center}
\begin{tikzpicture}[thick,scale=0.75, every node/.style={scale=0.75}]

\fill (-10.5,0.5) circle (2pt);
\node (a) at (-10.5,0.5) {};
\node [left] at (a) {$\Sigma^{k-1} a$};

\filldraw[fill=black!20!white] (-10,0.5) -- (-8.5,2) -- (-6,-0.5) -- (-7.5,-2) -- (-10,0.5); 

\filldraw[fill=black!30!white] (-8.5,2) -- (-8,2.5) -- (-5.5,0) -- (-6,-0.5) -- (-8.5,2); 

\filldraw[fill=black!40!white] (-7.5,-2) -- (-6,-0.5) -- (-5.5,-1) -- (-7,-2.5) -- (-7.5,-2); 

\filldraw[fill=black!10!white] (-6,-0.5) -- (-5.5,0) -- (-5,-0.5) -- (-5.5,-1) -- (-6,-0.5); 

\fill[black!50!white] (-9.5,0.5) circle (2pt);
\node (tauinv-a) at (-9.5,0.5) {};
\node [right] at (tauinv-a) {$\tau^{-1} \Sigma^{k-1} a$};

\draw[dotted] (-9.5,0.5) -- (-8,2) -- (-5.5,-0.5) -- (-7,-2) -- (-9.5,0.5);

\node (2) at (-6.5,1.5) {};
\node at (2) {$(2)$};
\node (3) at (-9,-1) {};
\node at (3) {$(3)$}; 
\node (4) at (-5,-0.5) {};
\node[right] at (4) {$(4)$};
\node (1) at (-5.75,-2){};
\node[above] at (1) {$(1)$};

\draw[thick] (-11,-2) -- (-7.5,-2);
\draw[dotted] (-7.5,-2) -- (-6.5,-2);
\draw[thick] (-6.5,-2) -- (-5,-2);

\draw[thick] (-11,2) -- (-8.5,2);
\draw[dotted] (-8.5,2) -- (-7.5,2);
\draw[thick] (-7.5,2) -- (-5,2);
\end{tikzpicture}
\end{center}
\caption{$\Ext^k_{\sB_m (A_n)}(-,a) \neq 0$.}
\label{fig:ext-shaded}
\end{figure}

\begin{remark}
It follows immediately from Lemma \ref{lemma:exthammockconditions} that, for $n \geq 2$, $\Ext^k_{\sB_m (A_n)} (a,a) = 0$, for any object $a \in \ind{\sB_m (A_n)}$ and $1 \leq k \leq m$. In the case when $n=1$, we have $\Ext^m_{\sB_m (A_1)} (a,a) \neq 0$, for any object $a \in \ind{\sB_m (A_1)}$. Therefore, there are no $m$-rigid objects in $\sB_m (A_1)$, and so we take $n \geq 2$ throughout the rest of the paper.  
\end{remark}

\section{Classification of connected maximal $m$-rigid objects} \label{sec:classification}

The aim of this section is to give a characterisation of a large subclass of the maximal $m$-rigid objects of $\sB_m (A_n)$, which we call \emph{connected} maximal $m$-rigid objects, and will be defined below. This characterisation will make use of the geometric model described in Section~\ref{sec:background}.  

We can view a maximal $m$-rigid object $T$ as a graph whose vertices are the vertices of $\mcP_{n,m}$ and whose edges are (correspond to) the indecomposable summands of $T$. We will determine the properties that characterise these graphs. The method used is similar to the one used in \cite{CS11}, but we will see that this geometric model provides a much neater description of these objects. 

From now on, we shall tacitly switch between interpreting a maximal $m$-rigid object as a direct sum of indecomposable objects in $\sB_m (A_n)$ and as a graph whose vertices are those of $\mcP_{n,m}$, as described.

\begin{proposition}\label{prop:m-rigidconditions}
A set of objects $\sT$ in $\sB_m (A_n)$ is $m$-rigid if and only if it satisfies the following conditions:
\begin{compactenum}
\item There are no $k$-neighbours, for every $k \in \{1, \ldots, m\}$. 
\item There are no adjacent short diagonals.
\item The only possible crossings are between a short diagonal and a long diagonal. 
\end{compactenum}
\end{proposition}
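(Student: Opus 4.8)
The plan is to read the statement directly off the $\Ext$-hammock description in Lemma~\ref{lemma:exthammockconditions}. By definition $\sT$ is $m$-rigid precisely when $\Ext^k_{\sB_m(A_n)}(b,a)=0$ for all $a,b\in\sT$ and all $k\in\{1,\dots,m\}$ (self-pairs being automatic by the Remark), so by Lemma~\ref{lemma:exthammockconditions} this is equivalent to saying that, for every ordered pair of summands and every such $k$, none of the four conditions (1)--(4) of that lemma holds. I would therefore prove the proposition by matching the non-occurrence of each of those conditions, taken over both $\Ext^k(b,a)$ and $\Ext^k(a,b)$, against the three conditions of the proposition.

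First I would dispose of the neighbour and shift conditions. Conditions (1)--(2) of the lemma say exactly that $\arc b$ is a $k$-neighbour of $\arc a$ incident with $a_1+k$ or $a_2+k$; running over both $\Hom$-directions (so that $\arc a$ plays the role of the first entry) turns the incidence $+k$ into $\pm k$, whence their simultaneous vanishing for all $k$ is equivalent to there being no $k$-neighbours, i.e.\ condition~(1). For condition~(4), $\arc b=\Sigma^k\arc a$, I would observe that $\Sigma^k\arc a$ has the same side-lengths as $\arc a$ and, for $1\leq k\leq m$, is never a ``distant'' diagonal: if $\arc a$ is short it crosses $\arc a$ for $k<m$ and is an adjacent short diagonal for $k=m$, while if $\arc a$ is long it always crosses $\arc a$. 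Thus the shift condition is entirely subsumed by the crossing and adjacency phenomena governed by conditions~(2) and~(3), and in particular an adjacent pair of short diagonals is the $k=m$ shift and so carries a nonzero $\Ext^m$.

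The heart of the argument is the analysis of crossings, which I would organise around the four boundary arcs of a crossing $C(a_1,b_1,a_2,b_2)$, writing $p=[a_1,b_1]$, $q=[b_1,a_2]$, $r=[a_2,b_2]$, $s=[b_2,a_1]$. Since $\arc a,\arc b$ are $(m+1)$-diagonals one gets $p\equiv r$ and $q\equiv s$ with $p+q\equiv 1\pmod{m+1}$, and a diagonal is short exactly when one of its two sides has length $m+1$. With this bookkeeping: a short--short crossing is precisely a shift $\arc b=\Sigma^t\arc a$ with $1\leq t\leq m-1$, hence yields a nonzero $\Ext^t$ by condition~(4); a short--long crossing cuts the short side of length $m+1$ into two arcs summing to $m+2$, forcing each to be at most $m$ and so to equal its own residue modulo $m+1$, which makes condition~(3) (with its requirement $x_i\geq 1$) fail in both directions, so no extension arises. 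For a pair that does not cross and is not a $k$-neighbour, no condition of the lemma can apply. Together with the long--long case this gives both implications: the forbidden crossings all produce extensions, and the permitted configurations produce none.

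The step I expect to be the main obstacle is showing that two crossing \emph{long} diagonals always admit an extension, so that condition~(3) may legitimately forbid them. Here I would use that each side of a long diagonal has length at least $2(m+1)$, whence each of the four sums $p+q$, $q+r$, $r+s$, $s+p$ is at least $2m+3$. Let $\rho,\sigma\in\{0,\dots,m\}$ be the common residues of $p,r$ and of $q,s$, so $\rho+\sigma\equiv 1\pmod{m+1}$. If $\rho=0$ then $\sigma=1$ and both $q,s\geq 2>\sigma$, giving $\Ext^1(a,b)\neq 0$; the case $\sigma=0$ is symmetric. Otherwise $\rho,\sigma\in\{1,\dots,m\}$ with $\rho+\sigma=m+2$, and the assumption that neither $\Ext^\rho(b,a)$ nor $\Ext^\sigma(a,b)$ is nonzero forces one of $\{p,r\}$ and one of $\{q,s\}$ to equal its residue, hence to be at most $m$; since the two arcs of any one side sum to at least $2m+3$, the small arcs must be non-adjacent, forcing two \emph{adjacent} arcs to sum to $\rho+\sigma=m+2<2m+3$, a contradiction. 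Assembling the neighbour, shift, short--long and long--long analyses then yields the equivalence.
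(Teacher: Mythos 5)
Your proposal is correct, and its overall strategy is the same as the paper's: both proofs reduce the proposition to Lemma~\ref{lemma:exthammockconditions} and check the identical list of configurations (neighbours in both $\Hom$-directions, adjacent short diagonals via the shift $\Sigma^{m}$, short--short crossings as shifts $\Sigma^{t}$ with $1\leq t\leq m-1$, short--long crossings failing the $x_i\geq 1$ requirement, and long--long crossings). The one genuine difference is in the long--long case, which both you and the paper treat as the main obstacle: the paper runs an asymmetric three-way case split on the residue of $[a_1+1,b_1]$ modulo $m+1$ and exhibits a specific nonzero $\Ext$ in each case, whereas you introduce the four boundary arcs $p,q,r,s$ with common residues $\rho,\sigma$ satisfying $\rho+\sigma\equiv 1\pmod{m+1}$ and derive a contradiction from the fact that one ``small'' arc from $\{p,r\}$ and one from $\{q,s\}$ must occur, any such pair is adjacent, yet adjacent arcs sum to at least $2m+3>m+2=\rho+\sigma$. (Your phrase ``the small arcs must be non-adjacent'' is slightly garbled --- every choice of one arc from each alternating pair is adjacent --- but the contradiction you draw is exactly the right one, and your bounds check out: a genuine crossing forces $p,q,r,s\geq 2$, so an arc congruent to a nonzero residue $\sigma$ with value exceeding it is at least $\sigma+m+1$, supplying the $x_i\geq 1$ in the lemma.) Your symmetric bookkeeping buys two things over the paper's version: it handles both $\Ext^{k}(b,a)$ and $\Ext^{k}(a,b)$ simultaneously rather than case by case, and, combined with your explicit observation that condition~(4) of the lemma (the shift $\Sigma^{k}\arc{a}$, $1\leq k\leq m$) is always realised geometrically as a crossing or as an adjacent pair of short diagonals, it makes fully explicit the converse verification that the paper compresses into ``it is also easy to see that the other conditions in Lemma~\ref{lemma:exthammockconditions} are not satisfied either.''
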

\begin{proof}
Suppose $\sT$ is an $m$-rigid object. Given an arc $\arc{a} = \{a_1, a_2\}$ in $\sT$, $k \in \{1, \ldots m\}$ and a $k$-neighbour $\arc{b}$ of $\arc{a}$, then $\arc{b}$ must be incident with one of the following vertices: $a_1+k, a_2 +k, a_1-k$ or $a_2 - k$. In the first two cases, we have $\Ext^k (b,a) \neq 0$ and in the other cases, we have $\Ext^k (a,b) \neq 0$, by Lemma \ref{lemma:exthammockconditions}. Therefore, $\arc{b} \not\in \sT$, and (1) holds. Now, given a short diagonal $\arc{a} = \{a_1, a_1+m\}$, we have $\Sigma^m \arc{a} = \{a_1 + m, a_1 + 2m\}$ and $\Sigma^{-m} \arc{a} = \{a_1 -m, a_1\}$. Therefore, adjacent short diagonals have an $\Ext^m$, and so (2) holds. 

Now, let $\arc{a}, \arc{b}$ be two short arcs in $\sT$ which cross. Assume, without loss of generality, that $C(a_1,b_1,a_2,b_2)$. Then there is $1 \leq k \leq m-1$ such that $\arc{b} = \Sigma^k \arc{a}$, and so $\Ext^k (b,a) \neq 0$, a contradiction. Suppose now, $\arc{a}, \arc{b}$ are two long arcs in $\sT$ which cross. Assume, without loss of generality, that $C(a_1,b_1,a_2,b_2)$. Since $\arc{a}$ and $\arc{b}$ are long $(m+1)$-diagonals, we can write $[a_1,a_2] = t(m+1)$, and $[b_1,b_2] = t' (m+1)$, for some $1 < t, t' < n$. 

\Case{$[a_1+1,b_1] =  x$, for some $1 \leq x \leq m$} We have $[b_1+1,a_2] = [a_1,a_2] - [a_1,b_1] = t (m+1) - (x+1) = (t-1) (m+1) + (m-x)$, where $0 \leq m-x \leq m-1$ and $t-1 \geq 1$, since $t>1$. Similarly, $[b_2+1,a_1] = [b_2,b_1]-[a_1, b_1] = (n+1-t') (m+1) -(x+1) = (n-t') (m+1) + (m-x)$, with $0 \leq m-x \leq m-1$ and $n-t' \geq 1$. Hence, by Lemma \ref{lemma:exthammockconditions}, $\Ext^{m-x+1} (a,b) \neq 0$, a contradiction.  

\Case{$[a_1+1,b_1] = \ell (m+1) + i$, for some $\ell \geq 1$ and $0 \leq i \leq m-1$} In this case, we must have $[a_2 +1, b_2] = \ell^\prime (m+1) +i$, for some $\ell^\prime \geq 0$. If $\ell^\prime \geq 1$, then $\Ext^{i+1} (b,a) \neq 0$ by Lemma \ref{lemma:exthammockconditions}, whereas $\ell^\prime = 0$ is covered by the previous case. 

\Case{$[a_1+1,b_1] = \ell (m+1) + m$, for some $\ell \geq 1$} In this case, we must have $[b_2+1, a_1] =  y_1 (m+1)$ and $[b_1+1,a_2] =  y_2 (m+1)$, for some $y_1, y_2 \geq 1$. Hence, $\Ext^1 (a,b) \neq 0$, a contradiction. This implies that (3) must hold. 

Conversely, let $\sT$ be a set of objects in $\sB_m (A_n)$ satisfying conditions (1), (2) and (3). If $\arc{a} = \{a_1, a_2\}$ and $\arc{b} = \{b_1,b_2\}$ are $(m+1)$-diagonals in $\sT$ which cross, then one of the arcs, say $\arc{a}$, must be short and the other one long. Suppose, without loss of generality that $C(a_1,b_1,a_2,b_2)$ and $[a_1,b_1] = x$ and $[b_1,a_2] = y$, for some $1 \leq x, y \leq m$, since $\arc{a}$ is short. But then there is no $1 \leq k \leq m$ for which this crossing satisfies condition (3) in Lemma \ref{lemma:exthammockconditions}. It is also easy to see that the other conditions in Lemma \ref{lemma:exthammockconditions} (for every $1 \leq k \leq m$) are not satisfied either, which implies that $\sT$ is $m$-rigid.
\end{proof}

\begin{remark}
If $m=1$, no $2$-diagonal can cross a short $2$-diagonal. Therefore, there are no crossings in a maximal $1$-rigid object in $\sB_1(A_n)$. 
\end{remark}

\begin{lemma}\label{lem:isos-shortdiagonals}
Let $\sT$ be a maximal $m$-rigid object in $\sB_m (A_n)$ and $\arc{a} = \{a_1, a_1 + m\}$ a short arc in $\sT$. Then the vertices $a_1 +m + i$ and $a_1 -i$, for $1 \leq i \leq m$, are isolated vertices. 
\end{lemma}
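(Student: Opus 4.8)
My plan is to argue that $m$-rigidity alone already forbids any summand of $\sT$ from being incident with these vertices; concretely, I would assume for contradiction that some arc $\arc{b}\in\sT$ is incident with $a_1+m+i$ for some $1\leq i\leq m$ and derive a violation of Proposition~\ref{prop:m-rigidconditions}. The vertices $a_1-i$ are symmetric, replacing the endpoint $a_1+m$ of $\arc{a}$ by $a_1$ and $v+k$ by $v-k$, so I would only write out the $a_1+m+i$ case in full. Since $a_1+m+i\notin\{a_1,a_1+m\}$ for $1\leq i\leq m$ and $n\geq 2$, necessarily $\arc{b}\neq\arc{a}$, and I would then split into two cases according to whether or not $\arc{b}$ crosses $\arc{a}$.

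The non-crossing case is immediate: as $a_1+m$ is a vertex of $\arc{a}$ and $\arc{b}$ is incident with $(a_1+m)+i$, the arcs $\arc{a}$ and $\arc{b}$ are $i$-neighbours, contradicting Proposition~\ref{prop:m-rigidconditions}(1). For the crossing case, I would use that the short arc $\arc{a}=\{a_1,a_1+m\}$ separates the vertices $a_1+1,\dots,a_1+m-1$ from the remaining boundary vertices; since $a_1+m+i$ lies among the latter, crossing forces the second endpoint of $\arc{b}$ to be $a_1+j$ with $1\leq j\leq m-1$. The key computation is then that $\arc{b}$, being an $(m+1)$-diagonal, must have $[a_1+j,a_1+m+i]=m+i-j+1$ divisible by $m+1$; as $|i-j|\leq m-1<m+1$ this forces $i=j$, so $\arc{b}=\{a_1+i,a_1+m+i\}$ is itself short. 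Two crossing short diagonals contradict Proposition~\ref{prop:m-rigidconditions}(3), completing this case. When $i=m$ no admissible $j$ exists, so no crossing arc arises and the non-crossing argument applies.

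I expect the crossing case to be the only real obstacle: the content is that the divisibility built into the notion of $(m+1)$-diagonal pins the second endpoint down so rigidly that any diagonal crossing $\arc{a}$ and meeting $a_1+m+i$ is forced to be short --- precisely what condition~(3) rules out. The symmetric treatment of $a_1-i$ runs identically, the crossing case yielding instead $[a_1-i,a_1+j]=i+j+1\equiv 0\pmod{m+1}$, hence $i+j=m$ and $\arc{b}=\{a_1-i,a_1+m-i\}$ short, while the non-crossing case again produces an $i$-neighbour. Once both families of vertices are shown to admit no incident summand, they are isolated, as claimed.
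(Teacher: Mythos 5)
Your proof is correct and takes essentially the same route as the paper, whose proof is a one-line appeal to Proposition~\ref{prop:m-rigidconditions}: your non-crossing case is its condition (1) and your crossing case is its condition (3), with the divisibility computation for $(m+1)$-diagonals filling in the ``straightforward'' details the authors omit. The only cosmetic difference is that the paper also cites ``no adjacent short diagonals'' (condition (2)) as an ingredient, whereas in your write-up the arc $\{a_1+m,\,a_1+2m\}$ arising when $i=m$ is already excluded as an $m$-neighbour under the paper's stated definition of $k$-neighbours, so nothing is missing.
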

\begin{proof}
This is a straightforward consequence of the fact that there are no $k$-neighbours, for $1 \leq k \leq m$, no crossings between short arcs, and no adjacent short arcs.  
\end{proof}

\begin{definition}
Let $T$ be a maximal $m$-rigid object of $\sB_m (A_n)$ and $\sT$ be the corresponding graph in $\mcP_{n,m}$. We say that $T$ is \textit{connected} if the full subgraph of $\sT$ with set of vertices given by the non-isolated vertices in $\sT$ is connected. 
\end{definition}

Our aim is to classify the connected maximal $m$-rigid objects of $\sB_m (A_n)$, that is, the maximal $m$-rigid objects which are also connected. We will see in Section~\ref{sec:endoalgebras} that these are precisely the maximal $m$-rigid objects for which the corresponding endomorphism algebras are connected. In the case when $m=1$, every maximal $1$-rigid object is connected (see \cite{CS11}).

The following lemma states that there will be no crossings in connected maximal $m$-rigid objects.  

\begin{lemma}
\label{lem:crossdisc}
Let $T$ be a maximal $m$-rigid object in $\sB_m (A_n)$ with corresponding graph $\sT$ in $\mcP_{n,m}$, and let $\sT^\prime$ be the full subgraph of $\sT$ on the non-isolated vertices. If $\sT^\prime$ contains a crossing, then it is disconnected.
\end{lemma}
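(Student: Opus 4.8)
The plan is to prove the stronger, more structural statement that the short diagonal involved in the crossing is an \emph{isolated edge} of $\sT^\prime$: its two endpoints are incident to no arc of $\sT$ other than that diagonal itself. Disconnectedness then follows immediately, since the crossing long diagonal supplies non-isolated vertices outside this edge. First I would reduce to a single short--long crossing: because $T$ is $m$-rigid, Proposition~\ref{prop:m-rigidconditions}(3) forces any crossing of $\sT$, hence any crossing of the subgraph $\sT^\prime$, to be between a short diagonal $\arc{a} = \{a_1, a_1+m\}$ and a long diagonal $\arc{b} = \{b_1, b_2\}$. Since $\arc{b}$ crosses $\arc{a}$, exactly one endpoint of $\arc{b}$, say $b_1 = a_1 + j$, lies strictly between $a_1$ and $a_1+m$, so $1 \le j \le m-1$, while $b_2$ lies on the long side. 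I would also record, via Lemma~\ref{lem:isos-shortdiagonals} with $i = m$, that the vertices $a_1 + 2m$ and $a_1 - m$ are isolated.

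The heart of the argument is the claim that $a_1+m$ is incident to no arc of $\sT$ except $\arc{a}$. Let $\arc{c} \neq \arc{a}$ be an arc of $\sT$ incident to $a_1+m$; note $a_1+m \notin \{b_1, b_2\}$, so $\arc{c} \neq \arc{b}$. If $\arc{c}$ is short, then the only short diagonal at $a_1+m$ other than $\arc{a}$ is $\{a_1+m, a_1+2m\}$, which cannot lie in $\sT$ because $a_1 + 2m$ is isolated. If $\arc{c}$ is long, then $\arc{c}$ and $\arc{b}$ cannot cross (two long diagonals never cross, by Proposition~\ref{prop:m-rigidconditions}(3)); but $\arc{c}$ is incident to $a_1 + m = b_1 + (m-j)$ with $1 \le m-j \le m-1$, so $\arc{c}$ and $\arc{b}$ are $(m-j)$-neighbours, contradicting Proposition~\ref{prop:m-rigidconditions}(1). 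Hence no such $\arc{c}$ exists. I would then run the mirror-image argument at $a_1$: the only short diagonal there other than $\arc{a}$ is $\{a_1-m, a_1\}$, excluded since $a_1 - m$ is isolated, and any long diagonal at $a_1 = b_1 - j$ with $1 \le j \le m-1$ is a $j$-neighbour of $\arc{b}$, again forbidden.

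It follows that $\{a_1, a_1+m\}$, joined only by the edge $\arc{a}$, is a connected component of $\sT^\prime$; as $b_1$ is non-isolated and $b_1 \notin \{a_1, a_1+m\}$, the graph $\sT^\prime$ has at least two components and is therefore disconnected. The one genuine subtlety—what the short/long case split is engineered to dispatch—is to verify that \emph{every} conceivable arc at an endpoint of the short diagonal is excluded: a short such arc is forced onto one of the isolated ``buffer'' vertices supplied by Lemma~\ref{lem:isos-shortdiagonals}, while a long one is trapped into being a forbidden $k$-neighbour of the crossing diagonal $\arc{b}$ for some $k$ between $1$ and $m-1$. I do not expect any serious obstacle beyond keeping these index ranges straight and confirming $\arc{c} \neq \arc{b}$; in particular, the presence of several crossings causes no difficulty, since a single crossing already produces the isolated edge $\arc{a}$.
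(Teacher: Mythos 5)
Your proof is correct and follows essentially the same route as the paper's: both arguments show that every conceivable arc at an endpoint of the short diagonal $\arc{a}$ is either an adjacent short diagonal, a long--long crossing with $\arc{b}$, or a $k$-neighbour of $\arc{b}$ for some $1 \leq k \leq m-1$, all forbidden by Proposition~\ref{prop:m-rigidconditions}, so that $\arc{a}$ is an isolated edge of $\sT^\prime$. The only cosmetic difference is that you exclude the adjacent short diagonals via the isolated vertices of Lemma~\ref{lem:isos-shortdiagonals} rather than citing Proposition~\ref{prop:m-rigidconditions}(2) directly, and you make the index bookkeeping explicit where the paper leaves it implicit.
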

\begin{proof}
By Proposition \ref{prop:m-rigidconditions}, a crossing must be between a short arc $\arc{a}$ and a long arc $\arc{b}$. Since any other arc incident with one of the endpoints of $\arc{a}$ is either $\Sigma^m \arc{a}$, $\Sigma^{-m} \arc{a}$, a long arc crossing $\arc{b}$ or a $k$-neighbour of $\arc{b}$, for some $1 \leq k \leq m-1$, we must have that the valency of the endpoints of $\arc{a}$ is one. Therefore, the graph is disconnected. 
\end{proof}

A collection of noncrossing diagonals in a marked disc $\mcP$ induces a dissection of $\mcP$. We call each region in this dissection a {\it tile}. In particular, each region of $\mcP_{n,m}$ in the graph $\sT$ corresponding to a connected maximal $m$-rigid object $T$ is a tile. We will now examine the possible tiles appearing in these graphs. We can divide the tiles in two types: with or without open boundary.

\begin{definitions}
Let $\sG$ be a noncrossing unoriented graph in $\mcP_{n,m}$, $\mcT$ a tile of $\sG$ and $i_1, i_2, \ldots, i_k$ be $k$ consecutive vertices of $\mcP_{n,m}$ that lie in $\mcT$, with $k \geq m+1$.
\begin{compactenum}
\item If $k = m+1$, we say that $\mcT$ has an \textit{open boundary} $(i_1,i_{m+1})$ if $\mcT$ is bounded by the short $(m+1)$-diagonal $\{i_1,i_{m+1}\}$, and $i_2, \ldots, i_m$ are isolated vertices.  
\item If $k >m+1$, we say that $\mcT$ has an \textit{open boundary} $(i_1, i_k)$ if $i_2, \ldots, i_{k-1}$ are isolated, and $i_1, i_k$ are incident with (bounding) arcs of $\mcT$.
\item The \textit{length of the open boundary} $(i_1, i_k)$, with $k \geq m+1$, is defined to be $k-1$.
\item The \textit{length of the tile $\mcT$} is given by the number of $(m+1)$-diagonals which bound $\mcT$, and it is denoted by $\ell (\mcT)$.
\end{compactenum}
\end{definitions}

Given a tile $\mcT$ of a connected maximal $m$-rigid object with an open boundary $(i_1,i_k)$, it is clear that the vertices $i_2, \ldots, i_{k-1}$ are isolated. Moreover, any given tile has at most one open boundary, by the connectedness assumption. 

\begin{lemma}\label{lemma:cycles}
Let $T$ be a connected maximal $m$-rigid object in $\sB_m (A_n)$, and $\sT$ the corresponding graph in $\mcP_{n,m}$. 
\begin{compactenum}
\item No cycle in $\sT$ contains a short diagonal.
\item Any simple cycle in $\sT$ has length $m+3$.
\end{compactenum}
\end{lemma}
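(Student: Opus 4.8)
The plan is to handle the two parts separately, using throughout that $T$ is connected, so by Lemma~\ref{lem:crossdisc} the graph $\sT$ has no crossings and is therefore an outerplanar graph whose edges are pairwise non-crossing chords of $\mcP_{n,m}$. For part~(1), I would show that a short diagonal can share no vertex with another edge of $\sT$. If some cycle passed through a short diagonal $\arc{s} = \{a_1, a_1+m\}$, then each endpoint of $\arc{s}$ would have to meet a second diagonal; say $\arc{d}\in\sT$ is incident with $a_1$ and $\arc{d}\neq\arc{s}$. Taking $v=a_1$ (incident with $\arc{d}$), the diagonal $\arc{s}$ is incident with $v+m = a_1+m$, so $\arc{d}$ and $\arc{s}$ are $m$-neighbours, contradicting condition~(1) of Proposition~\ref{prop:m-rigidconditions}. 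Equivalently, both endpoints of a short diagonal have valency one, so no short diagonal can lie on any cycle.

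For part~(2), fix a simple (chordless) cycle $v_0, v_1, \ldots, v_{\ell-1}$. Since $\sT$ has no crossings, its vertices occur in clockwise order around $\mcP_{n,m}$, and by part~(1) every edge $\{v_i,v_{i+1}\}$ is a long diagonal, so $[v_i,v_{i+1}] = t_i(m+1)$ with $t_i\geq 2$. Travelling once around the polygon gives $\sum_{i}[v_i,v_{i+1}] = \bigl((m+1)(n+1)-2\bigr)+\ell$, so $\ell - 2 = (m+1)\bigl(\sum_i t_i - (n+1)\bigr)$; hence $\ell\equiv 2 \pmod{m+1}$, and as $\ell\geq 3$ the smallest admissible value is $\ell = m+3$. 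It then remains to exclude $\ell\geq 2m+4$, which is where maximality enters.

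The key point is that a chordless cycle bounds a single tile with empty interior: the only vertices of $\mcP_{n,m}$ lying in this region are the $v_i$ themselves (all others are separated off by one of the long edges), so any chord of the region would join two of the $v_i$ and there are none. Assuming $\ell\geq 2m+4$, I would try to enlarge $\sT$ by $d = \{v_0, v_{m+2}\}$. A count gives $[v_0,v_{m+2}] = (m+1)\bigl(\sum_{i=0}^{m+1}t_i - 1\bigr)$, so $d$ is a long $(m+1)$-diagonal, and it crosses nothing since it lies inside the empty tile. The crux is to verify that $d$ has no $k$-neighbours in $\sT$, so that $\sT\cup\{d\}$ still satisfies the three conditions of Proposition~\ref{prop:m-rigidconditions}, contradicting maximality. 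Here the cycle edges act as shields: because $\{v_{\ell-1},v_0\}$ and $\{v_0,v_1\}$ are long and meet at $v_0$, the $2m$ vertices $v_0\pm 1,\dots,v_0\pm m$ all lie strictly inside gaps bounded by cycle edges, and any diagonal of $\sT$ meeting one of them would be a $j$-neighbour of a cycle edge for some $1\leq j\leq m$, contradicting rigidity. The same argument at $v_{m+2}$ uses the edges $\{v_{m+1},v_{m+2}\}$ and $\{v_{m+2},v_{m+3}\}$, which are three distinct cycle vertices precisely because $\ell\geq 2m+4$. Thus $d$ has no $k$-neighbours; since it is long and crosses nothing, conditions~(2) and~(3) are preserved, so $\sT\cup\{d\}$ is $m$-rigid, contradicting maximality. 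Hence $\ell<2m+4$, and together with $\ell\equiv 2\pmod{m+1}$ and $\ell\geq 3$ this forces $\ell = m+3$.

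I expect the shielding step to be the main obstacle: one must be certain that every vertex within distance $m$ of an endpoint of $d$ genuinely falls in a gap guarded by a \emph{long} cycle edge, which is exactly where the bound $t_i\geq 2$ and the standing assumption $\ell\geq 2m+4$ (ensuring $v_{m+1}, v_{m+2}, v_{m+3}$ are distinct) are both needed. A secondary point to flag in the write-up is the meaning of ``simple cycle'': a cycle carrying a chord—such as the boundary of two adjacent tiles—has length $2m+4$, so the statement genuinely concerns chordless cycles, equivalently the boundaries of single tiles, and the maximality argument above relies precisely on this empty-interior property.
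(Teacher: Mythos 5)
Your part (2) is essentially the paper's own proof: the same congruence $\ell \equiv 2 \pmod{m+1}$ (derived by travelling once around the polygon rather than by the paper's telescoping sum — an equivalent computation), the same candidate diagonal spanning $m+2$ consecutive cycle edges, and the same maximality contradiction. Your explicit reading of ``simple'' as chordless, together with the empty-tile observation justifying that the new diagonal crosses nothing, is a fair sharpening of a point the paper leaves implicit (``$\{v_1,v_{m+3}\}$ does not lie in $\sT$ \ldots does not cross any diagonal in $\sT$'' is only clear for chordless cycles, which is also all that is needed for Proposition~\ref{prop:possibletiles}).

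The genuine gap is in part (1), and it leaks into the ``shielding'' step of part (2). You treat two diagonals \emph{sharing an endpoint} as $m$-neighbours: with $\arc{d} \ni a_1$ and $\arc{s} = \{a_1, a_1+m\}$ you take $v = a_1$ on $\arc{d}$ and note that $\arc{s}$ is incident with $v+m$, concluding that both endpoints of a short diagonal have valency one. That conclusion is false and contradicts the paper itself: in tiles of type $(\mcT_{m+2})$ and $(\mcT_{m+3})$ a short outer-diagonal shares a vertex with a long diagonal of the same tile, condition (1) of Theorem~\ref{thm:maximalmrigid} (via Lemma~\ref{lemma:isolatedverticescondition}) \emph{forces} such configurations, and Figure~\ref{fig:tilesm=1} exhibits them inside a maximal $m$-rigid object. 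So the $k$-neighbour relation, as it feeds Lemma~\ref{lemma:exthammockconditions}, must be read as concerning disjoint diagonals (the shared-vertex short--short case is what clause (4), $b = \Sigma^k a$, and the separate ``no adjacent short diagonals'' condition handle); if your reading were correct, every tiling the paper classifies would fail rigidity. The paper's actual argument for (1) instead takes the two cycle edges flanking the short diagonal at its two \emph{distinct} endpoints: they do not cross, they are vertex-disjoint (a triangle through a short diagonal is impossible, since summing $[\cdot,\cdot]$ around the polygon would give $(m+1)\mid 1$), and their endpoints $a_1$ and $a_1+m$ are at distance exactly $m$, so \emph{they} are the $m$-neighbouring pair. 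The same misreading undermines your shielding claim that any diagonal of $\sT$ meeting $v_0 \pm 1, \ldots, v_0 \pm m$ is a $j$-neighbour of a cycle edge: a short diagonal $\{v_0, v_0+m\}$ anchored at $v_0$ meets $v_0+m$ yet shares $v_0$ with both flanking edges (and with your added diagonal $d$), so it is a neighbour of none of them; one must check separately, via Lemma~\ref{lemma:exthammockconditions}, that such an anchored short arc does not obstruct $d$ (it does not), while arcs anchored at $v_1$ or $v_{m+1}$ reaching into the relevant gaps are ruled out by the divisibility $[i,j] \equiv 0 \bmod (m+1)$. With these repairs your argument closes, but as written both parts rest on an application of the neighbour definition that would render the paper's classification self-contradictory.
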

\begin{proof}
(1) If a cycle in $\sT$ contained a short diagonal, then there would be $m$-neighbouring $(m+1)$-diagonals, which is a contradiction. 

(2) Let $\mcC$ be a cycle in $\sT$ of length $k$, and let $v_1, v_2, \ldots, v_k$ be the vertices of $\mcC$ such that $C(v_1, v_2, \ldots, v_k)$. Since $\{v_k,v_1\}$ is an $(m+1)$-diagonal, we must have $[v_1,v_k] = x (m+1)$, for some $x \geq 1$. But $[v_1,v_k] = \sum\limits_{i=1}^{k-1} [v_i,v_{i+1}] - (k-2) = \sum\limits_{i=1}^{k-1} x_i (m+1) - (k-2)$, for some $x_i \geq 1$. Therefore, $k-2$ must be of the form $y(m+1)$, and hence $k = y(m+1)+2$, for some $y \geq 1$. 

Suppose now $\mcC$ is simple, and assume $y \geq 2$. Consider $\{v_1, v_{m+3}\}$, which is a long $(m+1)$-diagonal, and does not lie in $\sT$. Since $\mcC$ is simple, and $T$ is an $m$-rigid object for which there are no crossings, we have that $\{v_1, v_{m+3}\}$ does not cross any diagonal in $\sT$. Given $k = 1, \ldots, m$, any $k$-neighbour of $\{v_1,v_{m+3}\}$ in $\sT$ would either be a $k$-neighbour of $\{v_1,v_2\}, \{v_{m+2},v_{m+3}\}, \{v_{m+3},v_{m+4}\}$ or $\{v_{k},v_1\}$ or it would cross one of these $(m+1)$-diagonals which lie in $\sT$. Therefore, $T \oplus \{ \{v_1, v_{m+3}\} \}$ is $m$-rigid, contradicting the maximality of $\sT$. Hence, $y = 1$, and so any simple cycle must have length $m+3$.  
\end{proof}

We shall partition the tiles of a connected maximal $m$-rigid object into types. Each type of tile is completely determined by two numbers: the length of the tile and the length of its open boundary. By convention, if $\mcT$ is a tile with no open boundary, then the length of its open boundary is zero. We can then denote a type $(\mcT)$ of tiles as a pair $(\ell, b)$, where $\ell$ is the length of any tile of type $(\mcT)$ and $b$ is the length of its open boundary.

\begin{remark}\label{rem:diagonalsintiles}
If $\mcT$ has an open boundary, let $v_1, \ldots, v_{k}$ be the set of vertices of $\mcP_{n,m}$ such that $C(v_1, \ldots, v_{k})$, and $\{v_i,v_{i+1}\}$ is an $(m+1)$-diagonal bounding $\mcT$. Then, using the same argument as in Lemma \ref{lemma:cycles} (1), we can see that the diagonals $\{v_i,v_{i+1}\}$, for $2 \leq i \leq k-2$ must be long. 
\end{remark}

We call the $(m+1)$-diagonals $\{v_1,v_2\}$ and $\{v_{k-1},v_{k}\}$ the \emph{outer-diagonals}. 

\begin{proposition}\label{prop:possibletiles}
Let $T$ be a connected maximal $m$-rigid object in $\sB_m (A_n)$. The possible tiles of the corresponding graph $\sT$ lie in one of the following classes: 
\begin{compactenum}
\item[$(\mcT_k)$] $(k, m+k-1)$, where $1 \leq k \leq m+1$. 
\item[$(\mcT_{m+2})$] $(m+2, 2m+1)$, and at least one of the outer-diagonals is short.
\item[$(\mcT'_1)$] $(1, 2m+1)$,
\item[$(\mcT_{m+3})$] $(m+3, m+1)$, and both outer-diagonals are short.
\item[$(\mcT'_{m+3})$] $(m+3, 0)$. 
\end{compactenum}
\end{proposition}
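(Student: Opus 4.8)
The plan is to analyse a single tile $\mcT$ of $\sT$, writing $\ell=\ell(\mcT)$ for its length and $b$ for the length of its open boundary (with $b=0$ if there is none), and to determine the admissible pairs $(\ell,b)$ by combining a counting relation with the maximality of $T$. First I would dispose of the closed case: if $\mcT$ has no open boundary, its bounding $(m+1)$-diagonals form a simple cycle in $\sT$, so Lemma~\ref{lemma:cycles}(2) forces $\ell=m+3$, which is exactly type $(\mcT'_{m+3})$. Henceforth I assume $\mcT$ has an open boundary, and label its diagonal-endpoints $v_1,\dots,v_k$ (with $C(v_1,\dots,v_k)$ and $k=\ell+1$), so that the bounding diagonals are the $\{v_i,v_{i+1}\}$, the outer-diagonals are $\{v_1,v_2\}$ and $\{v_{k-1},v_k\}$, the interior ones are long by Remark~\ref{rem:diagonalsintiles}, and the open boundary runs from $v_k$ to $v_1$.

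Next I would record the numerical constraints. Setting $[v_i,v_{i+1}]=x_i(m+1)$ and telescoping exactly as in the proof of Lemma~\ref{lemma:cycles}(2) gives $[v_1,v_k]=\sum_i x_i(m+1)-(k-2)$; combining this with $[v_k,v_1]=b+1$ and $[v_1,v_k]+[v_k,v_1]=(m+1)(n+1)$ yields the congruence $b\equiv \ell-2 \pmod{m+1}$. A lower bound $b\ge m$ then comes from rigidity: if two tile-endpoints were at distance $<m$ along the open boundary, Lemma~\ref{lemma:exthammockconditions} would make their incident outer-diagonals $k'$-neighbours for some $1\le k'\le m-1$, contradicting Proposition~\ref{prop:m-rigidconditions}(1) (this is also what guarantees the boundary is a genuine open boundary in the required sense).

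Then I would exploit maximality to bound $\ell$ and extract the outer-diagonal conditions. Mimicking Lemma~\ref{lemma:cycles}(2), for $k\ge m+3$ I would test the chords $\{v_i,v_{i+m+2}\}$, which are again $(m+1)$-diagonals lying inside $\mcT$; by Lemma~\ref{lemma:exthammockconditions} such a chord can be adjoined to $T$ without creating an extension unless it is a neighbour of a bounding diagonal, and since the interior diagonals are long, the only possible obstruction is a short outer-diagonal. This simultaneously forces $\ell\le m+3$ and produces the dichotomy: for $\ell=m+2$ the closing chord $\{v_1,v_k\}$ must be blocked, so at least one outer-diagonal is short (type $(\mcT_{m+2})$); for $\ell=m+3$ both $\{v_1,v_{m+3}\}$ and $\{v_2,v_{m+4}\}$ must be blocked, so both outer-diagonals are short (type $(\mcT_{m+3})$); and $\ell\ge m+4$ leaves an adjoinable chord, a contradiction.

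Finally, for $\ell\le m+1$ I would pin down $b$ by the same philosophy, now inserting into the open boundary either a short diagonal or a chord incident to an interior endpoint $v_i$, and using Lemma~\ref{lem:isos-shortdiagonals} to locate the forced isolated vertices; together with $b\equiv\ell-2$ and $b\ge m$ this leaves only $b=m+\ell-1$, except that for $\ell=1$ the cap on the long side of a single long diagonal also survives, giving the wider value $b=2m+1$ (type $(\mcT'_1)$) alongside $b=m$ (type $(\mcT_1)$). Assembling the cases produces the stated list. The step I expect to be the main obstacle is precisely this last determination of $b$ together with the short/long dichotomy for the outer-diagonals: it requires a careful, case-by-case application of the Ext-hammock of Lemma~\ref{lemma:exthammockconditions} (distinguishing chords of the open boundary, chords incident to an interior endpoint, and the closing chord) to certify that no further diagonal can be adjoined, and attention to which side of a short outer-diagonal the tile lies on, as governed by Lemma~\ref{lem:isos-shortdiagonals}.
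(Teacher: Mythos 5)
Your overall strategy coincides with the paper's own proof almost step for step: closed tiles are dispatched through Lemma~\ref{lemma:cycles}(2); the telescoping count that you package as the congruence $b\equiv \ell-2 \pmod{m+1}$ is exactly the computation the paper performs inline in each case; the bound $\ell\leq m+3$ and the short outer-diagonal conditions for $\ell=m+2,m+3$ come from the very same adjoinable chords ($\{v_2,v_{m+4}\}$ and the closing chords $\{v_1,v_{m+3}\}$, $\{v_2,v_{m+4}\}$), tested against maximality. So the route is the paper's; the issue is that, as written, your determination of $b$ is incomplete in precisely the two cases where the statement pins it down nontrivially. First, your lower bound is off by one: you forbid tile-endpoints ``at distance $<m$'', producing $k'$-neighbours with $1\leq k'\leq m-1$, but Proposition~\ref{prop:m-rigidconditions}(1) forbids $k$-neighbours for \emph{all} $1\leq k\leq m$, and you need the boundary case: for $\ell=m+2$ the congruence gives $b\equiv m\pmod{m+1}$, so $b=m$ is admissible under your stated constraints and is excluded only because the two outer-diagonals would then be $m$-neighbours (distance exactly $m$). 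Second, you deploy the insertion argument capping $b$ only ``for $\ell\leq m+1$''; without a cap valid for all $\ell$, the values $b=3m+2$ (for $\ell=m+2$) and $b=2m+2$ (for $\ell=m+3$) survive, so the types $(\mcT_{m+2})=(m+2,2m+1)$ and $(\mcT_{m+3})=(m+3,m+1)$ are not actually established by your text.

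The paper closes both holes at once, before any case division on $\ell$, with its Claim 1: if $b-1\geq 2m+1$, then $\{v_{k-1},i_{m+1}\}$ --- one endpoint at the end of an outer-diagonal, the other at the $(m+1)$-st isolated vertex of the open boundary --- is a long $(m+1)$-diagonal that can be adjoined to $T$ without creating crossings or $k$-neighbours, contradicting maximality; this yields the uniform bound $m\leq b\leq 2m+1$, which combined with your congruence and the corrected ($\leq m$) neighbour exclusion pins $b$ in every case. Note also that of the two insertions you propose, the short-diagonal insertion breaches maximality only when $b\geq 3m+2$ (one needs clearance of at least $m+1$ at each end of the inserted short arc), so it cannot substitute for the chord in the range $2m+2\leq b\leq 3m+1$; the chord with a tile-vertex endpoint is essential. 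With these two repairs your congruence-first organisation reproduces the paper's argument in full.
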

\begin{proof}
We know that each tile $\mcT$ in $\sT$ has either zero or one open boundary, given the connectedness assumption. Suppose $\mcT$ has no open boundary, so that $\mcT$ is a cycle. It follows from Lemma~\ref{lemma:cycles} that any cycle in $\sT$  bounds a region which is a union of tiles, each of which is bounded by an $(m+3)$-cycle. Hence, $\mcT$ must be a tile of type $(\mcT'_{m+3})$. 

Suppose now $\mcT$ has an open boundary with length $b$. The number of isolated vertices in the open boundary is then $b-1$. Let $v_1, \ldots, v_{k}$ be as in Remark \ref{rem:diagonalsintiles}, and order the isolated vertices $i_1, \ldots, i_{b-1}$ in the open boundary such that $C(v_k, i_1, \ldots, i_{b-1}, v_1)$. Note, in particular, that $\mcT$ is of type $(k-1,b)$. 

{\it Claim 1.} We must have $m \leq b \leq 2m+1$. 

Since there are no $i$-neighbours in $\sT$, for any $1 \leq i \leq m$, and a short arc is of the form $\{a,a+m\}$, we cannot have $b-1 \leq m-2$. On the other hand, suppose $b-1 \geq 2m+1$. Then, $\{v_{k-1}, i_{m+1}\}$ is a long $(m+1)$-diagonal which does not lie in $\sT$ and does not introduce any crossings or $i$-neighbours, for $1 \leq i \leq m$, since $b-1 \geq 2m+1$ and $\{v_{k-2},v_{k-1}\}, \{v_{k-1},v_k\} \in \sT$. Hence, $T \oplus \{v_{k-1}, i_{m+1}\}$ is $m$-rigid, contradicting the maximality of $T$. Therefore, $b-1 \leq 2m$ and the claim is proved. 

{\it Claim 2.} We must have $1 \leq k-1 \leq m+3$. 

Suppose, for a contradiction, that $k-1 \geq m+4$. Then $\{v_2, v_{m+4}\}$ is a long $(m+1)$-diagonal which does not lie in $\sT$ and such that $T \oplus \{v_2, v_{m+4}\}$ is $m$-rigid, a contradiction which finishes the proof of this claim. 

Assume $k = 2$, i.e.~$\mcT$ is bounded only by one $(m+1)$-diagonal and one open boundary with $b-1$ isolated vertices. We know, by Claim 1, that $m-1 \leq b-1 \leq 2m$. Hence, since $\{v_1,v_2\}$ is an $(m+1)$-diagonal, we must have either $b = m$, in which case $\mcT$ is of type $(\mcT_1)$, or $b = 2m+1$, in which case $\mcT$ is of type $(\mcT'_1)$.

Now assume $2 \leq k-1 \leq m+2$. We want to check that $b = m+k-2$. Since $\{v_{k-1},v_k\}$ is an $(m+1)$-diagonal, we have $[v_k,v_{k-1}] = x (m+1)$, for some $x \geq 1$. On the other hand, $[v_k, v_{k-1}] = 1 + (b-1) + \sum\limits_{i=1}^{k-2} [v_i,v_{i+1}] - (k-3)$, and so $b = \ell (m+1) + (k-3)$, for some $\ell$. We must have $\ell \geq 1$, otherwise either $b-1 < 0$ or  $\{v_1,v_2\}$ and $\{v_{k-1},v_{k}\}$ would be $m$-neigbours, a contradiction. But if, $\ell \geq 2$, then $b \geq 2m + 2$, contradicting Claim 1. Therefore, $\ell = 1$ and $b = m +k-2$, as required. 

Finally, assume $k-1 = m+3$. Then $b = \ell (m+1) + (k-3) = (\ell+1) (m+1)$, for some $\ell$. If $\ell \geq 1$, then $b \geq 2m +2$, and if $\ell < 0$, then $b \leq 0$, a contradiction. Therefore, $\ell = 0$, and $b= m+1$.    

Suppose $k-1 = m+2$. We need to check that either $\{v_1,v_2\}$ or $\{v_{m+2},v_{m+3}\}$ is short. Suppose neither of them is. Then $\{v_1,v_{m+3}\}$ is a long $(m+1)$-diagonal which does not lie in $\sT$ and such that $T \oplus \{v_1,v_{m+3}\}$ is $m$-rigid, contradicting the maximality of $T$. 

Finally let $k-1 = m+3$. If $\{v_1,v_2\}$ were a long diagonal, then $\{v_1,v_{m+3}\}$ is a long $(m+1)$-diagonal such that it does not lie in $\sT$ and $T \oplus \{v_1,v_{m+3}\}$ is $m$-rigid, contradicting maximality of $T$. Similarly, if $\{v_{m+3},v_{m+4}\}$ were a long diagonal, we would have $T \oplus \{v_2, v_{m+4}\}$ $m$-rigid, a contradiction. Hence, both diagonals $\{v_1, v_2\}$ and $\{v_{m+3},v_{m+4}\}$ are short. 
\end{proof}

\begin{definition}
Given a disc $\mcP$ with marked points on the boundary, a {\it tiling of $\mcP$} is defined to be a collection of tiles glued to each other in such a way that they cover the polygon $\mcP$. 
\end{definition}

\begin{example}
Figure \ref{fig:tilesm=1} illustrates a tiling of $\mcP_{10,1}$. This tiling has examples of all possible tile types that arise when $m = 1$. The light shaded tiles are of type $\mcT_1$ and the dark shaded tiles are of type $\mcT_1^\prime$. 
\begin{figure}[!ht]
\begin{center}
\begin{tikzpicture}[thick,scale=0.6, every node/.style={scale=0.6}]

\draw (0,0) circle (3cm);


\filldraw[fill=black!10!white] (18:3cm) .. controls (27.5:2.5cm) .. (36:3cm);

\filldraw[fill=black!10!white] (144:3cm) .. controls (153.5:2.5cm) .. (162:3cm);

\filldraw[fill=black!10!white] (216:3cm) .. controls (225.5:2.5cm) .. (234:3cm);

\filldraw[fill=black!10!white] (270:3cm) .. controls (279.5:2.5cm) .. (288:3cm);


\fill[fill=black!30!white] (36:3cm) -- (54:2.5cm) -- (72:2.5cm) -- (90:3cm) -- (72:3cm) -- (54:3cm) -- (36:3cm);
\draw (36:3cm) .. controls (54:2.5cm) and (72:2.5cm) .. (90:3cm);

\fill[fill=black!30!white] (162:3cm) -- (180:2.5cm) -- (198:2.5cm) -- (216:3cm) -- (198:3cm) -- (180:3cm) -- (162:3cm);
\draw (162:3cm) .. controls (180:2.5cm) and (198:2.5cm) .. (216:3cm);

\fill[fill=black!30!white] (288:3cm) -- (306:2.5cm) -- (324:2.5cm) -- (342:3cm) -- (324:3cm) -- (306:3cm) -- (288:3cm);
\draw (288:3cm) .. controls (306:2.5cm) and (324:2.5cm) .. (342:3cm);

\draw (36:3cm) .. controls (72:1cm) and (126:1cm) .. (162:3cm);

\draw (216:3cm) .. controls (270:1cm) and (306:1cm) .. (342:3cm);

\draw (342:3cm) .. controls (9.5:1.5cm) .. (36:3cm);

\node (a) at (0,0) {};
\node at (a) {$\mcT^\prime_{m+3}$};

\node (b) at (-0.3,1.7) {};
\node at (b) {$\mcT_{m+2}$};

\node (c) at (-0.2,-2) {};
\node at (c) {$\mcT_{m+3}$};

\node (d) at (2.3,0.4) {};
\node at (d) {$\mcT_2$};

\foreach \angle in {0,18,36,54,72,90,108,126,144,162,180,198,216,234,252,270,288,306,324,342}
{\filldraw[fill=black] (\angle:3cm) circle (0.07cm);}
\end{tikzpicture}
\end{center}
\caption{Possible tiles when $m = 1$.}
\label{fig:tilesm=1}
\end{figure}
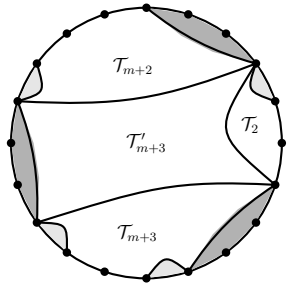
\end{example}

It follows from the connectedness assumption that the noncrossing graph associated to a connected maximal $m$-rigid object corresponds to a tiling of $\mcP_{n,m}$ whose tiles are described in Proposition \ref{prop:possibletiles}. However, not every such tiling corresponds to a maximal $m$-rigid object, as it might not be $m$-rigid or it might not be maximal.  

\begin{lemma}\label{lemma:isolatedverticescondition}
Let $T$ be a connected maximal $m$-rigid object in $\sB_m (A_n)$. If there is a tile with open boundary $(i_0,i_{2m+1})$ of length $2m+1$ then the short diagonals $\{i_0-m, i_0\}$ and $\{i_{2m+1}, i_{2m+1}+m\}$ must lie in $\sT$.
\end{lemma}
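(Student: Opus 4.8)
The plan is to argue by contradiction from the maximality of $T$, after two reductions. First, since $T$ is connected, Lemma~\ref{lem:crossdisc} tells us that $\sT$ has no crossings, so $\sT$ is an honest dissection of $\mcP_{n,m}$ whose tiles are exactly those classified in Proposition~\ref{prop:possibletiles}; I will use this freely. Second, the reflection of $\mcP_{n,m}$ that fixes the open boundary and interchanges its endpoints $i_0 \leftrightarrow i_{2m+1}$ preserves $m$-rigidity, connectedness and the tile structure, and sends $\{i_0-m,i_0\}$ to $\{i_{2m+1},i_{2m+1}+m\}$. So it suffices to prove $\{i_{2m+1},i_{2m+1}+m\} \in \sT$. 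Note that by definition of the open boundary the vertices $i_1,\dots,i_{2m}$ are isolated, and $i_{2m+1}$ is incident with the outer-diagonal of $\sT$ at that end of $\mcT$.

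Suppose for contradiction that $\{i_{2m+1},i_{2m+1}+m\}\notin\sT$. The idea is to show that $T\oplus\{i_{2m+1},i_{2m+1}+m\}$ is still $m$-rigid, contradicting maximality, by verifying the three conditions of Proposition~\ref{prop:m-rigidconditions} for the enlarged set. Writing $s$ for the short diagonal $\{i_{2m+1},i_{2m+1}+m\}$, its interior vertices $i_{2m+1}+1,\dots,i_{2m+1}+m-1$ are isolated (once the step below is in place), so $s$ crosses nothing, giving condition~(3); the only diagonals of $\sT$ meeting $s$ do so at $i_{2m+1}$, namely the outer-diagonal of $\mcT$ there and the outer-diagonal $d$ of the clockwise-adjacent tile $\mcU$, and both are long (the former because, by our assumption, it is not $s$ itself), so no adjacent short diagonals arise and condition~(2) holds. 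For condition~(1) I would check that every vertex within distance $m$ of an endpoint of $s$, other than the shared vertex $i_{2m+1}$, is isolated, so that $s$ has no $k$-neighbour in $\sT$; here one must read the $k$-neighbour relation in its non-degenerate form, so that a diagonal sharing only the vertex $i_{2m+1}$ with $s$ (which, via Lemma~\ref{lemma:exthammockconditions}, has no nonzero $\Ext^k$ with $s$) is not mistaken for an obstruction.

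The step that makes condition~(1) work, and which I expect to be the main obstacle, is controlling the clockwise side of $i_{2m+1}$, i.e.\ ruling out a genuine $k$-neighbour of $s$ incident with one of $i_{2m+1}+m+1,\dots,i_{2m+1}+2m$. This is exactly where the neighbouring tile $\mcU$ enters: one identifies, via Proposition~\ref{prop:possibletiles}, the tile types that can present a \emph{long} outer-diagonal $d$ at the end of their open boundary, and shows that in each such case the open boundary of $\mcU$ beginning at $i_{2m+1}$ is long enough to force $i_{2m+1}+1,\dots,i_{2m+1}+2m$ to be isolated, so that $s$ is unobstructed and can be added. (Lemma~\ref{lem:isos-shortdiagonals} is the natural tool for reading off these isolated runs.) Thus $d$ cannot be long, which forces $d=\{i_{2m+1},i_{2m+1}+m\}$ and hence $\{i_{2m+1},i_{2m+1}+m\}\in\sT$; the symmetric diagonal $\{i_0-m,i_0\}$ then
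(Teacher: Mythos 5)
Your reduction to one endpoint and the overall contradiction framework are fine, but the step you yourself flagged as the main obstacle is a genuine gap, and it cannot be closed along the lines you sketch. You claim that whenever the bounding diagonal $d$ of the clockwise-adjacent tile $\mcU$ at $i_{2m+1}$ is long, the classification of Proposition~\ref{prop:possibletiles} forces the open boundary of $\mcU$ to have length at least $2m+1$, isolating $i_{2m+1}+1,\dots,i_{2m+1}+2m$. This is false: the types $(\mcT_k)=(k,m+k-1)$ with $2\leq k\leq m+1$ carry no shortness constraint on their outer-diagonals, and their open boundaries have length $m+k-1\leq 2m$. Concretely, for $m=1$ consider the $m$-rigid configuration $\{1,4\},\{4,7\},\{6,7\}$ in $\mcP_{5,1}$ (a $10$-gon): the tile with open boundary $(1,4)$ has length $2m+1=3$, while the clockwise-adjacent tile $\mcU$ is of type $(\mcT_2)$ with \emph{long} outer-diagonal $d=\{4,7\}$ and open boundary $(4,6)$ of length only $2$. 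In any such configuration the diagonal of $\sT$ incident with $w=i_{2m+1}+m+1$ (here $\{6,7\}$) is a $1$-neighbour of $s=\{i_{2m+1},i_{2m+1}+m\}$ (here $\{4,5\}$), so $T\oplus s$ is \emph{not} $m$-rigid and your intended contradiction via adding $s$ never materialises. The configuration does contradict maximality, but through a different arc — which your strategy, built entirely around the addability of $s$, has no way to produce.

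The paper avoids the clockwise side of $i_{2m+1}$ altogether by adding a short diagonal \emph{inside} the given open boundary: assuming $\{i_0-m,i_0\}\notin\sT$, it adds $\{i_0,i_m\}$ (and, symmetrically, $\{i_{m+1},i_{2m+1}\}$ at the other end, which in my example is the addable arc $\{3,4\}$). There every vertex relevant to the crossing and $k$-neighbour checks lies among $i_1,\dots,i_{2m}$, which are isolated by hypothesis, while the vertices $i_0-k$, $1\leq k\leq m$, carry no diagonals of $\sT$ because any such diagonal would be a $k$-neighbour of the outer-diagonal at $i_0$, crossings being excluded by Lemma~\ref{lem:crossdisc}; the hypothesis $\{i_0-m,i_0\}\notin\sT$ is used exactly once, to rule out the adjacent-short-diagonal obstruction. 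Note finally that any repair of your route — first excluding adjacency of the types $\mcT_2,\dots,\mcT_{m+1}$ by exhibiting some \emph{other} addable arc — would in substance have to rediscover the paper's interior arc, so the detour through $\mcU$ buys nothing.
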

\begin{proof}
Denote by $i_1, i_2, \ldots, i_{2m}$ the vertices of $\mcP_{n,m}$ such that $C(i_0, i_1, i_2, \ldots, i_{2m}, i_{2m+1})$. Recall that $i_1, \ldots, i_{2m}$ must be isolated vertices. Suppose that $\{i_0-m,i_0\}$ does not lie in $\sT$. Then $\{i_0,i_m\}$ can be added to $\sT$ without adding crossings, $k$-neighbours for $1 \leq k \leq m$, nor adjacent short diagonals. In other words, $T \oplus \{i_0,i_m\}$ is $m$-rigid, contradicting the maximality of $T$. Similarly, $\{i_{2m+1},i_{2m+1}+m\}$ must also lie in $\sT$.
\end{proof}

We are now ready to state the main result of this section. 

\begin{theorem}\label{thm:maximalmrigid}
There is a one-to-one correspondence between connected maximal $m$-rigid objects in $\sB_m (A_n)$ and tilings of $\mcP_{n,m}$ with tiles as in Proposition \ref{prop:possibletiles} satisfying the following conditions:
\begin{compactenum}
\item If $(i_0, i_{2m+1})$ is an open boundary of length $2m+1$, then $\{i_0-m,i_0\}$ and $\{i_{2m+1},i_{2m+1} + m\}$ are short diagonals belonging to the tiling,
\item There are no adjacent short diagonals. 
\item There is no sequence of consecutive vertices $v_1, \ldots, v_k, v_{k+1}, \ldots, v_\ell$ such that  $C(v_1, \ldots, v_k, v_{k+1}, \ldots, v_\ell)$, $v_1, \ldots, v_{k-1}, v_{k+1}, \ldots, v_\ell$ are isolated, $k-1, \ell-k \geq m+1$ and $\ell \geq 3m+1$.
\end{compactenum}
\end{theorem}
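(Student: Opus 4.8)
The correspondence sends a connected maximal $m$-rigid object $T$ to its associated graph $\sT$ (read as a collection of $(m+1)$-diagonals), and conversely sends a tiling to the direct sum of the indecomposables corresponding to its diagonals. Since both assignments merely record the underlying set of diagonals, they are mutually inverse, so it suffices to check that each is well defined. The plan is therefore to prove two implications: (a) if $T$ is connected maximal $m$-rigid then $\sT$ is a tiling of the stated form satisfying (1)--(3); and (b) if a tiling has tiles as in Proposition~\ref{prop:possibletiles} and satisfies (1)--(3), then the corresponding object is connected, $m$-rigid, and maximal.

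For (a), most of the work is already in place. By Lemma~\ref{lem:crossdisc} the graph $\sT$ has no crossings, hence dissects $\mcP_{n,m}$ into tiles, and by Proposition~\ref{prop:possibletiles} these tiles are of the listed types, so $\sT$ is a tiling of the required shape. Condition (1) is exactly Lemma~\ref{lemma:isolatedverticescondition}, and condition (2) is part of Proposition~\ref{prop:m-rigidconditions}. It remains to deduce condition (3) from maximality, which I would argue contrapositively: assuming a sequence $v_1,\dots,v_\ell$ as in (3) exists, I would produce an $(m+1)$-diagonal $\arc d\notin\sT$ such that $T\oplus\arc d$ is still $m$-rigid, contradicting maximality.

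For (b), connectedness is immediate, since a tiling covers the disc and adjacent tiles share a bounding diagonal or a vertex, so the non-isolated vertices span a connected graph. For $m$-rigidity I would verify the three conditions of Proposition~\ref{prop:m-rigidconditions}: a tiling is noncrossing, so the crossing condition there is vacuous; condition (2) of the theorem gives the absence of adjacent short diagonals; and the absence of $k$-neighbours ($1\le k\le m$) has to be extracted from the admissible tile shapes together with conditions (1)--(2), using the distances $[\cdot,\cdot]$ controlled inside and between neighbouring tiles and the characterisation in Lemma~\ref{lemma:exthammockconditions}. Maximality is then the reverse of the argument in (a): supposing some $(m+1)$-diagonal $\arc d$ could be added, I would show that its existence forces a violation of one of (1)--(3).

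The crux, and the step I expect to be the main obstacle, is the equivalence between maximality and condition (3). The key structural observation is that a new noncrossing $(m+1)$-diagonal must lie in the closure of a single tile, so its endpoints are corners or open-boundary vertices of one tile. Using the $k$-neighbour description of Lemma~\ref{lemma:exthammockconditions}, one checks that no diagonal can be inserted inside any single admissible tile; here it is essential that a diagonal sharing a vertex $v_0$ with an existing diagonal and having its other endpoint at $v_0+m$ is already an $m$-neighbour of it, so the naive short-diagonal insertions are blocked. The only remaining candidates are diagonals that straddle a non-isolated vertex $v_k$ by sharing an endpoint with one of the bounding diagonals at $v_k$ and reaching into the isolated regions on either side; such a diagonal avoids all crossings and $k$-neighbours precisely when those isolated regions are long enough, which is exactly the configuration forbidden by (3), while condition (1) disposes of the boundary case of open boundaries of length $2m+1$. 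Getting the bookkeeping right (divisibility of the relevant $[\cdot,\cdot]$ by $m+1$, the crossing conditions, and the precise thresholds $k-1,\ell-k\ge m+1$ and $\ell\ge 3m+1$) is where the real effort lies.
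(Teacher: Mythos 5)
Your overall skeleton agrees with the paper: forward direction via Proposition~\ref{prop:m-rigidconditions}, Proposition~\ref{prop:possibletiles} and Lemma~\ref{lemma:isolatedverticescondition}, with (3) proved contrapositively by exhibiting an addable diagonal; backward direction by checking rigidity and then maximality. But there is a genuine gap in your maximality analysis: your ``key structural observation'' that a new $(m+1)$-diagonal preserving $m$-rigidity must be noncrossing, and hence must lie in the closure of a single tile, is false. Proposition~\ref{prop:m-rigidconditions}(3) explicitly permits crossings between a short and a long diagonal in an $m$-rigid configuration, so maximality (which is maximality among \emph{all} $m$-rigid objects, not just noncrossing ones) must also be tested against crossing additions. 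Indeed, the diagonal witnessing non-maximality when condition (3) fails is a short diagonal $\arc{a}=\{a_1,a_2\}$ with $C(a_1,v_k,a_2)$ that \emph{crosses every long diagonal incident with} $v_k$; it shares no endpoint with any diagonal of $\sT$, and, contrary to your description, it cannot ``avoid all crossings'': any diagonal with $v_k$ strictly beneath it necessarily crosses the whole fan at $v_k$. Your version of the crucial case --- a candidate that straddles $v_k$ while ``sharing an endpoint with one of the bounding diagonals at $v_k$'' and remaining noncrossing --- describes an impossible configuration; as you yourself observe, a short diagonal with an endpoint on an existing diagonal is already an $m$-neighbour of it, so such candidates are all blocked and could never produce the obstruction that condition (3) encodes.

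Concretely, this gap bites in both directions. In (b), after ruling out insertions in the interior of a single tile (which your $m$-neighbour observation and condition (1) do handle, as in the paper), you must still rule out crossing short diagonals: the paper shows such an $\arc{a}$ can only cross the set of long arcs at a single vertex $v_k$, then invokes Lemma~\ref{lem:isos-shortdiagonals} to force $m$ isolated vertices beyond $a_1$ and beyond $a_2$, and the no-$k$-neighbour condition to force the $m-1$ vertices under $\arc{a}$ other than $v_k$ to be isolated --- this is precisely where the thresholds $k-1,\ell-k\geq m+1$ and $\ell\geq 3m+1$ of condition (3) come from; with your noncrossing assumption these thresholds cannot be derived at all. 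In (a), the diagonal you need for the contrapositive of (3) is likewise this crossing short diagonal. (One must also dispose of the case of a long diagonal crossing a lone short diagonal, which only occurs for $n=2$ where no long diagonals exist; the paper does this explicitly.) With the crossing case restored, the remainder of your outline proceeds along the same lines as the paper's proof.
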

\begin{proof}
If $T$ is a connected maximal $m$-rigid object, then it follows from Proposition \ref{prop:m-rigidconditions} $(2)$, Proposition \ref{prop:possibletiles}, and Lemma \ref{lemma:isolatedverticescondition}, that the corresponding graph is a tiling with tiles as in Proposition \ref{prop:possibletiles} satisfying conditions $(1)$ and $(2)$. If there is a sequence of consecutive vertices satisfying $(3)$, then it would be possible to add a short diagonal $\arc{a}$ which crosses every diagonal incident with $v_k$, and preserve $m$-rigidity, contradicting the maximality of $T$.

Now consider a tiling $\sT$ of $\mcP_{n,m}$ with tiles as in Proposition \ref{prop:possibletiles} satisfying conditions $(1)$, $(2)$ and $(3)$. Then clearly, there are no crossings, no $k$-neighbouring diagonals, for $1 \leq k \leq m$, and no adjacent short diagonals. Therefore, the direct sum of the indecomposable objects corresponding to the edges of the tiling is an $m$-rigid object $T$. 

Suppose there is an indecomposable object $a$, not isomorphic to any summand of $T$ such that $T \oplus a$ is $m$-rigid. 

\Case{$\arc{a}$ is a long diagonal} By Proposition \ref{prop:m-rigidconditions} $(3)$, $\arc{a}$ cannot cross any long diagonal in the tiling. On the other hand, due to the connectedness of the tiling $\sT$ and the neighbouring condition, $\arc{a}$ can only cross a short diagonal $\arc{s}$ if $\arc{s}$ is the only diagonal in $\sT$. By Proposition \ref{prop:possibletiles}, we must have $n=2$. However, there are no long diagonals in this case. Therefore, $\arc{a}$ can only be added to the interior of a tile. However, it is easy to check that the only diagonals that can be added in the interior of the tiles in Proposition \ref{prop:possibletiles}, whilst preserving $m$-rigidity, are short diagonals in tiles of type $(\mcT_1^\prime)$, turning them into tiles of type $(\mcT_2)$, or short diagonals in tiles of type $(\mcT_{m+2})$, turning them into tiles of type $(\mcT_{m+3})$. In particular, long diagonals cannot be added in the interior of tiles of the types described in Proposition \ref{prop:possibletiles}.

\Case{$\arc{a}$ is a short diagonal} If $\arc{a}$ is added to the interior of a tile of $\sT$, then in order to avoid $k$-neighbours, for $1 \leq k \leq m$, the only possibility is that $\arc{a}$ is added to a tile of type $(\mcT_{m+2})$ or $(\mcT^\prime_1)$. However, this would result in adjacent short diagonals, due to condition $(1)$ and the fact that these tiles have $2m$ isolated vertices. Hence, $\arc{a}$ must cross diagonals in $\sT$. In order to preserve the $m$-rigidity, $\arc{a}$ can only cross one set of long arcs incident with a common vertex $v_k$ of $\mcP_{n,m}$. Write $\arc{a}\coloneqq \{a_1, a_2\}$, such that $C(a_1,v_k,a_2)$. Due to Lemma \ref{lem:isos-shortdiagonals}, we must have at least $m$ isolated vertices preceding $a_1$ and $m$ isolated vertices following $a_2$. Moreover, since there are no $k$-neighbours, for $1 \leq k \leq m$, all $m-1$ vertices under the short arc $\arc{a}$ but $v_k$ are isolated in $\sT$. Moreover $a_1$ and $a_2$ cannot be incident with any other diagonal. Therefore, we can only add short arcs in the situation described in condition $(3)$. Since, by assumption this situation does not occur, the tiling gives rise to a maximal $m$-rigid object.  
\end{proof}

\begin{example}
In Figure \ref{fig:counter-example(3)}, we have a tiling of $\mcP_{8,2}$ which represents a $2$-rigid object but is not maximal. Indeed condition $(3)$ of Theorem \ref{thm:maximalmrigid} does not hold in this example, and one can add the dashed arc whilst preserving rigidity. 
\begin{figure}[!ht]
\begin{center}
\begin{tikzpicture}[thick,scale=0.6, every node/.style={scale=0.6}]

\draw (0,0) circle (3cm);

\draw (288:3cm) .. controls (302.4:2.6cm) .. (316.8:3cm);

\draw (28.8:3cm) .. controls (43.2:2.6cm) .. (57.6:3cm);

\draw (144:3cm) .. controls (158.4:2.6cm) .. (172.8:3cm);

\draw (28.8:3cm) .. controls (43.2:2cm) and (86.4:2cm) .. (100.8:3cm);

\draw (28.8:3cm) .. controls (57.6:1.2cm) and (115.2:1.2cm) .. (144:3cm);

\draw (28.8:3cm) .. controls (86.4:0.1cm) and (172.8:0.1cm) .. (230.4:3cm);

\draw (316.8:3cm) .. controls (331.2:2cm) and (14.4:2cm) .. (28.8:3cm);

\draw[dashed] (244.8:3cm) .. controls (230.4:2.6cm) .. (216:3cm);

\foreach \angle in {0,14.4,28.8,43.2,57.6,72,86.4,100.8,115.2,129.6,144,158.4,172.8,187.2,201.6,216,230.4,244.8,259.2,273.6,288,302.4,316.8,331.2,345.6}
{\filldraw[fill=black] (\angle:3cm) circle (0.07cm);}
\end{tikzpicture}
\end{center}
\caption{Tiling of $\mcP_{8,2}$ which does not satisfy condition $(3)$ in Theorem \ref{thm:maximalmrigid}.}
\label{fig:counter-example(3)}
\end{figure}
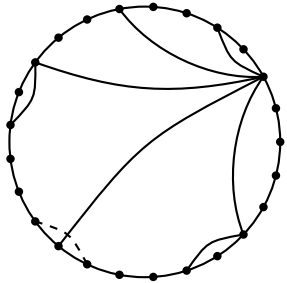
\end{example}

\begin{remark}
When $m=1$, condition (3) in Theorem \ref{thm:maximalmrigid} follows from condition (1). 
\end{remark}

\section{Endomorphism algebras as tiling algebras}\label{sec:endoalgebras}

Our aim in this section is to study the endomorphism algebras of the connected maximal $m$-rigid objects characterised in the previous section. It turns out that these algebras are a subclass of a much larger class of algebras, which we shall call tiling algebras.

\begin{definition}
Let $\mcP$ be a disc with at least two marked points on the boundary. Given a tiling $\sT$ of $\mcP$, we associate a quiver $Q_{\sT}$ with relations $\sR_{\sT}$ to $\sT$ in the following manner:  

\textit{Vertices of $Q_{\sT}$}: The vertices correspond to all the interior diagonals of $\sT$.

\textit{Arrows of $Q_{\sT}$}: Two vertices of $Q_{\sT}$ are related by an edge in $Q_{\sT}$ if the corresponding diagonals of $\sT$ share a vertex and belong to the same tile. 

Orientation of edges: Let $\alpha, \beta$ be two diagonals of $\sT$ sharing a vertex $x$ of $\mcP$ and belonging to the same tile. The edge in $Q_{\sT}$ joining $\alpha$ and $\beta$ is oriented $\alpha \rightarrow \beta$ if the rotation with minimal angle around $x$ that sends $\alpha$ to $\beta$ is clockwise. 

\textit{Relations $\sR_{\sT}$ in $Q_{\sT}$}: The composition of two successive arrows coming from the same tile is zero. We denote by $\sI_{\sT}$ the ideal generated by the relations $\sR_{\sT}$. 

The algebra $A_{\sT} = (Q_{\sT}, \sI_{\sT})$ is called the \textit{tiling algebra associated to $\sT$}.
\end{definition}

\begin{example}
In Figure \ref{fig:tilingalgebra}, we have the tiling algebra corresponding to the tiling of $\mcP_{10,1}$ in Figure \ref{fig:tilesm=1}.
 
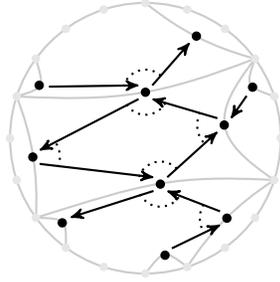
\begin{figure}[!ht]
\begin{center}
\begin{tikzpicture}[thick,scale=0.6, every node/.style={scale=0.6}, >=stealth']

\draw[black!20!white] (0,0) circle (3cm);


\draw[black!20!white] (18:3cm) .. controls (27.5:2.5cm) .. (36:3cm);

\draw[black!20!white] (144:3cm) .. controls (153.5:2.5cm) .. (162:3cm);

\draw[black!20!white] (216:3cm) .. controls (225.5:2.5cm) .. (234:3cm);

\draw[black!20!white] (270:3cm) .. controls (279.5:2.5cm) .. (288:3cm);


\draw[black!20!white] (36:3cm) .. controls (54:2.5cm) and (72:2.5cm) .. (90:3cm);

\draw[black!20!white] (162:3cm) .. controls (180:2.5cm) and (198:2.5cm) .. (216:3cm);

\draw[black!20!white] (288:3cm) .. controls (306:2.5cm) and (324:2.5cm) .. (342:3cm);

\draw[black!20!white] (36:3cm) .. controls (72:1cm) and (126:1cm) .. (162:3cm);

\draw[black!20!white] (216:3cm) .. controls (270:1cm) and (306:1cm) .. (342:3cm);

\draw[black!20!white] (342:3cm) .. controls (0:1.5cm) and (18:1.5cm) .. (36:3cm); 


\filldraw (25.5:2.63cm) circle (0.08cm); 

\filldraw (153.5:2.63cm) circle (0.08cm); 

\filldraw (225.5:2.63cm) circle (0.08cm); 

\filldraw (279.5:2.63cm) circle (0.08cm); 

\filldraw (63.5:2.53cm) circle (0.08cm); 

\filldraw (189.5:2.53cm) circle (0.08cm); 

\filldraw (315.5:2.53cm) circle (0.08cm); 

\filldraw (90:1.02cm) circle (0.08cm); 

\filldraw (288:1.07cm) circle (0.08cm); 

\filldraw (9.5:1.77cm) circle (0.08cm); 

\node (a) at (24.5:2.63cm) {};

\node (b) at (9.5:1.77cm) {};

\node (c) at (288:1.07cm) {};

\node (d) at (90:1.02cm) {};

\node (e) at (189.5:2.53cm) {};

\node (f) at (315.5:2.53cm) {};

\node (g) at (279.5:2.63cm) {};

\node (h) at (63.5:2.53cm) {};

\node (i) at (150.5:2.63cm) {};

\node (j) at (226.5:2.63cm) {};

\draw[->, name path=arrow10] (a.south west) -- (b.north east);

\draw[->, name path=arrow1] (b.north west) -- (d.south east);

\draw[->, name path=arrow2] (d.south west) -- (e.north east);

\draw[->, name path=arrow3] (e.south east) -- (c.north west);

\draw[->, name path=arrow4] (c.north east) -- (b.south west);

\draw[->, name path=arrow8] (f.north west) -- (c.south east);

\draw[->, name path=arrow9] (c.south west) -- (j.north east);

\draw[->, name path=arrow7] (g.north east) -- (f.south west);

\draw[->, name path=arrow6] (i.south east) -- (d.north west);

\draw[->, name path=arrow5] (d.north east) -- (h.south west);

\path[name path=Cc] (288:1.07cm) ellipse (0.7cm and 0.5cm);

\path[name intersections={of=arrow3 and Cc}];

\coordinate (3) at (intersection-1);

\path[name intersections={of=arrow4 and Cc}];
\coordinate (4) at (intersection-1);

\coordinate (0) at (288:1.07cm);

\tkzFindAngle(4,0,3) 
\tkzMarkAngle[dotted,size=0.5](4,0,3)

\path[name path=Cc] (288:1.07cm) ellipse (0.7cm and 0.5cm);

\path[name intersections={of=arrow9 and Cc}];

\coordinate (9) at (intersection-1);

\path[name intersections={of=arrow8 and Cc}];
\coordinate (8) at (intersection-1);

\coordinate (0) at (288:1.07cm);

\tkzFindAngle(9,0,8) 
\tkzMarkAngle[dotted,size=0.5](9,0,8)

\path[name path=Cc] (90:1.02cm) ellipse (0.7cm and 0.5cm);

\path[name intersections={of=arrow2 and Cc}];

\coordinate (2) at (intersection-1);

\path[name intersections={of=arrow1 and Cc}];
\coordinate (1) at (intersection-1);

\coordinate (0) at (90:1.02cm);

\tkzFindAngle(2,0,1) 
\tkzMarkAngle[dotted,size=0.5](2,0,1)

\path[name path=Cc] (90:1.02cm) ellipse (0.7cm and 0.5cm);

\path[name intersections={of=arrow5 and Cc}];

\coordinate (5) at (intersection-1);

\path[name intersections={of=arrow6 and Cc}];
\coordinate (6) at (intersection-1);

\coordinate (0) at (90:1.02cm);

\tkzFindAngle(5,0,6) 
\tkzMarkAngle[dotted,size=0.5](5,0,6)

\path[name path=Cc] (189.5:2.53cm) ellipse (0.5cm and 0.7cm);

\path[name intersections={of=arrow2 and Cc}];

\coordinate (2) at (intersection-1);

\path[name intersections={of=arrow3 and Cc}];
\coordinate (3) at (intersection-1);

\coordinate (0) at (189.5:2.53cm);

\tkzFindAngle(3,0,2) 
\tkzMarkAngle[dotted,size=0.6](3,0,2)

\path[name path=Cc] (9.5:1.77cm) ellipse (0.5cm and 0.7cm);

\path[name intersections={of=arrow4 and Cc}];

\coordinate (4) at (intersection-1);

\path[name intersections={of=arrow1 and Cc}];
\coordinate (1) at (intersection-1);

\coordinate (0) at (9.5:1.77cm);

\tkzFindAngle(1,0,4) 
\tkzMarkAngle[dotted,size=0.6](1,0,4)

\path[name path=Cc] (315.5:2.53cm) ellipse (0.5cm and 0.7cm);

\path[name intersections={of=arrow7 and Cc}];

\coordinate (7) at (intersection-1);

\path[name intersections={of=arrow8 and Cc}];
\coordinate (8) at (intersection-1);

\coordinate (0) at (315.5:2.53cm);

\tkzFindAngle(8,0,7) 
\tkzMarkAngle[dotted,size=0.6](8,0,7)

\foreach \angle in {0,18,36,54,72,90,108,126,144,162,180,198,216,234,252,270,288,306,324,342}
{\filldraw[fill=black!10!white,draw=black!10!white] (\angle:3cm) circle (0.07cm);}
\end{tikzpicture}
\end{center}
\caption{The tiling algebra of a tiling in $\mcP_{10,1}$.}
\label{fig:tilingalgebra}
\end{figure}
\end{example}

\subsection{Tiling algebras vs gentle algebras}
Before considering the endomorphism algebras of the maximal $m$-rigid objects studied in Section \ref{sec:classification}, we shall establish a relationship between tiling algebras and gentle algebras. 

From now on, paths in quivers will be read from left to right. Given a path $p$ in a quiver, we denote by $s(p)$ (resp. $t(p)$) the source (resp. target) of $p$. Given a vertex $x$ in a quiver, $v(x)$ denotes its valency.  
 
\begin{definition}
An algebra $A$ is {\it gentle} if it is Morita equivalent to an algebra $kQ/I$ satisfying the following conditions:
\begin{compactenum}
\item Each vertex of $Q$ is the starting point of at most two arrows and the endpoint of at most two arrows. 
\item For each arrow $\alpha$ in $Q$, there is at most one arrow $\beta$ in $Q$ such that $\alpha \beta \not\in I$, and there is at most one arrow $\gamma$ such that $\gamma \alpha \not\in I$.
\item For each arrow $\alpha$ in $Q$, there is at most one arrow $\delta $ in $Q$ such that $\alpha \delta \in I$, and there is at most one arrow $\mu$ such that $\mu \alpha \in I$.
\item $I$ is generated by paths of length 2. 
\end{compactenum}
\end{definition}

Throughout the remainder of the article, we assume that the quiver of a gentle algebra has no loops or 2-cycles.

Note that if a tiling has a tile with more than one open boundary, then the corresponding tiling algebra $A$ is disconnected. However, in this case $A$ is just a direct product of connected tiling algebras. Therefore, we restrict our attention to connected tiling algebras, and from now on every algebra will be tacitly considered to be connected, unless stated otherwise. 

\begin{definition}
Let $\kk Q/I$ be a path algebra. An oriented cycle in $Q$ is said to be {\it relation-full} if every pair of consecutive arrows in the cycle is a zero relation in $I$. 
\end{definition}

\begin{proposition}\label{prop:tilinggentle}
The tiling algebras are precisely the gentle algebras for which every cycle is oriented and relation-full. 
\end{proposition}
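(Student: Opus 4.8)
The plan is to prove both inclusions, relying throughout on the following local description of $A_{\sT}$. Each arrow of $Q_{\sT}$ is recorded by a single \emph{tile-corner}: a vertex $x$ of $\mcP$ together with two diagonals $\alpha,\beta$ that are the two edges of a common tile meeting at $x$, the arrow being $\alpha\to\beta$ when the minimal rotation about $x$ taking $\alpha$ to $\beta$ is clockwise. Dually, fixing a vertex $x$ of $\mcP$ and listing the incident edges (diagonals and boundary arcs) in clockwise angular order $e_0,e_1,\dots,e_k$, the arrows at $x$ form the ``fan'' $e_i\to e_{i+1}$ over those consecutive pairs that are both diagonals. Since $x$ lies on the boundary, the extreme entries $e_0,e_k$ are flanked by boundary arcs, so each fan is a \emph{path}, never a cycle; the length-two relations record the ways of passing straight through a diagonal inside one of its two adjacent tiles.

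First I would establish the forward inclusion. A diagonal $d$ has two endpoints, and at each endpoint it has at most one clockwise successor and one predecessor among the diagonals, giving at most two arrows out of and two arrows into the vertex $d$; this is gentleness~(1). Tracing the two tiles $T_L,T_R$ adjacent to $d$ shows that, of the two arrows out of $d$, exactly one lies in the same tile as a given incoming arrow $\alpha\to d$ and exactly one lies in the other tile; the first composite is a relation and the second is not. The symmetric statement for incoming arrows yields gentleness~(2) and~(3), while~(4) is immediate since $\sI_{\sT}$ is generated by the length-two tile relations. For the cyclic condition, I would use that each arrow belongs to a unique tile and form the dual graph $\mathcal D$ whose nodes are the tiles and whose edges are the interior diagonals (each shared by exactly two tiles); because $\sT$ is a non-crossing dissection of a disc, $\mathcal D$ is a tree. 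Given any cycle $d_1-\cdots-d_s-d_1$ in the underlying graph of $Q_{\sT}$ with distinct vertices, each consecutive pair sharing the vertex $d_{i+1}$ either stays in one tile or ``switches'' across the tree-edge $d_{i+1}$; distinctness of the $d_i$ forces each tree-edge to be crossed at most once, so the induced closed walk in $\mathcal D$ has all edge-multiplicities at most one, hence zero. Thus the cycle lies in a single (necessarily closed) tile $T$, where it must run all the way around $\partial T$, and there the corner arrows compose into a single oriented cycle every consecutive pair of which is a tile relation---so the cycle is oriented and relation-full.

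For the converse I would run the \emph{tiling algorithm}, reversing the dictionary above. Let $A=\kk Q/I$ be gentle with every cycle oriented and relation-full. Gentleness guarantees that every arrow lies in a unique \emph{permitted thread} (a maximal path none of whose consecutive pairs is a relation) and a unique \emph{forbidden thread} (a maximal sequence all of whose consecutive pairs are relations). I would build a marked disc by creating one polygon (tile) for each forbidden thread---a relation-full oriented cycle yields a closed tile whose edges are its vertices in cyclic order, and a non-cyclic maximal relation-path yields a tile with one open boundary whose diagonal edges are its vertices in order---and then gluing these polygons along their shared edges according to the permitted threads, each permitted thread prescribing how the diagonals fanning out of a single boundary vertex of $\mcP$ are arranged in clockwise order. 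One then checks that the resulting dissection $\sT$ is a genuine tiling and that its associated quiver with relations $(Q_{\sT},\sI_{\sT})$ is isomorphic to $(Q,I)$, the vertices, arrows, and relations matching by construction.

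The routine verifications---the fan/valency bookkeeping and the matching of relations---are bookkeeping, so I expect the real obstacle to be in the converse: showing that the polygons glued along permitted threads actually assemble into a \emph{disc} (a simply connected oriented surface with boundary) rather than a surface of higher genus or with the wrong crossing pattern. This is exactly where the hypothesis that \emph{every} cycle be oriented and relation-full is indispensable: a gentle algebra possessing a non-oriented cycle, or an oriented cycle that is not relation-full, would force either a crossing or a handle in any attempted gluing, and so could not arise from a tiling; conversely, ruling these out is what lets one certify that the dual graph of the glued object is a tree and hence that the surface is a disc. I would therefore spend the bulk of the argument on this topological consistency step, with the forward direction and the final identification $A\cong A_{\sT}$ being comparatively direct.
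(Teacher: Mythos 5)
Your proposal is correct and follows essentially the same route as the paper: the forward inclusion by direct local inspection (which the paper simply declares clear), and the converse via the tiling algorithm, building one tile per forbidden thread and gluing along shared quiver vertices. The one genuine organisational difference is how the converse output is certified to be a tiling of a \emph{disc}. You glue all polygons at once and then verify discness post hoc via your dual-tree argument; the paper instead runs the algorithm iteratively inside a fixed disc---each new tile is drawn inside the open region $R^{\prime}$ bounded by an already-placed diagonal---so planarity and discness are automatic by construction, with the weight carried by Lemma~\ref{lem:2forbidden} (every quiver vertex is incident with exactly two forbidden threads of $\cF_{\sG}$), which makes the iterative step well defined; termination follows since $\cF_{\sG}$ is finite, and connectedness of $\sG$ guarantees the whole quiver is exhausted. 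Note that your dual-tree observation is just the contrapositive of your own forward-direction cycle analysis (a cycle in the dual graph would yield a cycle of $Q$ passing through several distinct forbidden threads, hence not relation-full), so the ``topological consistency'' step you flag as the main obstacle does go through and is essentially what the paper encodes in the clause ``assume $g^{\prime}$ has not been considered in a previous step''. Two bookkeeping points you should make explicit to match the paper: an oriented relation-full cycle of length $r$ gives rise to $r$ distinct closed forbidden threads (its rotations), so ``one polygon per forbidden thread'' requires choosing a single representative $f_c$ per cycle, exactly as the paper does in defining $\cF_{\sG}$; and trivial forbidden threads (at valency-one vertices, and at valency-two vertices carrying a relation) must also receive tiles---these are the regions bounded by a single diagonal in the paper's initial step---otherwise the count of two tiles per diagonal fails at such vertices.
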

\begin{proof}
It is clear that tiling algebras are gentle algebras for which every cycle is oriented and relation-full. 

In the remainder of this subsection, we will introduce an algorithm, called the {\it tiling algorithm}, which constructs a tiling $\sT$ from a gentle algebra $\sG$ whose cycles are oriented and relation-full. It will be clear from the construction of the algorithm that the tiling algebra $A_{\sT}$ is $\sG$. 
\end{proof} 

Before describing the tiling algorithm, we need to recall some definitions (see \cite{AG}, for example) and set up some notation.  

\begin{definition} 
Let $\sG = \kk Q / I$ be a gentle algebra. 
\begin{compactenum}
\item A {\it (non-trivial) permitted path} in $\sG$ is a path $\alpha_1 \alpha_2 \ldots \alpha_k$ of length $k \geq 1$ with no relations. 
\item A maximal permitted path is called a {\it (non-trivial) permitted thread}. 
\item Let $v$ be a vertex of $Q$ which satisfies one of the following conditions:
\begin{compactitem}
\item The valency of $v$ is one, or 
\item The vertex $v$ is the source of exactly one arrow $\beta$, the target of exactly one arrow $\alpha$, and $\alpha \beta \not\in I$.
\end{compactitem} 
Then, we associate a {\it trivial permitted thread} to $v$, and denote it by $p_v$. 
\item A {\it (non-trivial) forbidden path} of $\sG$ is a path $\alpha_1 \alpha_2 \ldots \alpha_k$ of length $k \geq 1$ such that $\alpha_i \neq \alpha_j$ unless $i = j$, and $\alpha_i \alpha_{i+1} \in I$, for each $1 \leq i \leq k-1$. 
\item A maximal forbidden path is called a {\it (non-trivial) forbidden thread}. 
\item Let $v$ be a vertex of $Q$ which satisfies one of the following conditions:
\begin{compactitem}
\item The valency of $v$ is one, or 
\item The vertex $v$ is the source of exactly one arrow $\beta$, the target of exactly one arrow $\alpha$, and $\alpha \beta \in I$.
\end{compactitem} 
Then, we associate a {\it trivial forbidden thread} to $v$, and denote it by $f_v$. 
\item A forbidden thread $f$ is said to be {\it open} if it is not a cycle, and {\it closed} otherwise.
\end{compactenum}
\end{definition} 

Note that trivial forbidden threads are considered to be open. If every cycle in $\sG$ is oriented and relation-full, then every cycle in $\sG$ of length $r$ gives rise to $r$ forbidden threads. For each cycle $c$ in $\sG$, we choose one of these forbidden threads, and denote it by $f_c$. 

Let $\cF_{\sG} = \{\text{open forbidden threads in} \sG\} \cup \{f_c \mid c \text{ cycle in } \sG \}$. 

\begin{lemma}\label{lem:2forbidden}
Let $\sG$ be a gentle algebra whose cycles are oriented and relation-full, and $v$ be a vertex in the quiver of $\sG$. Then, there are precisely two forbidden threads in $\cF_{\sG}$ incident with $v$.
\end{lemma}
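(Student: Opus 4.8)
The plan is to fix a vertex $v$ and analyse the forbidden threads of $\cF_{\sG}$ meeting it purely in terms of the arrows incident with $v$. Write $\mathrm{In}(v)$ and $\mathrm{Out}(v)$ for the sets of arrows ending at, respectively starting at, $v$; by condition $(1)$ each has at most two elements, and I set $p=|\mathrm{In}(v)|$, $q=|\mathrm{Out}(v)|$. Call an ordered pair $(\alpha,\beta)$ with $\alpha\in\mathrm{In}(v)$, $\beta\in\mathrm{Out}(v)$ and $\alpha\beta\in I$ a \emph{matched pair} at $v$; these are exactly the spots where a non-trivial forbidden thread runs through $v$ without stopping. First I would record, using conditions $(2)$ and $(3)$, that matching is a partial injection both ways between $\mathrm{In}(v)$ and $\mathrm{Out}(v)$, and that its size $M$ is essentially forced by the valencies: if $q=2$ then each arrow of $\mathrm{In}(v)$ has exactly one continuation in $I$ and one not in $I$, hence is matched, and symmetrically if $p=2$. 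Consequently $M=\min(p,q)$, with the single exception $p=q=1$, where $M\in\{0,1\}$ according to whether the unique composite lies in $I$.

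Next I want to identify the forbidden threads of $\cF_{\sG}$ incident with $v$ bijectively with the following local data: the matched pairs (threads passing through $v$), the unmatched arrows of $\mathrm{In}(v)$ (threads ending at $v$), the unmatched arrows of $\mathrm{Out}(v)$ (threads starting at $v$), and the trivial thread $f_v$ when it is defined. The hard part — the main obstacle — is to show this really is a bijection, that is, that no single thread meets $v$ in two of these ways. This is precisely where the hypothesis that every cycle of $\sG$ is oriented and relation-full enters. If a thread met $v$ twice (two pass-throughs, a pass-through together with an end or start, or an open thread both starting and ending at $v$), then the portion of the thread between the two visits would be an oriented cycle, hence relation-full by hypothesis; writing the thread as $\delta_1\cdots\delta_k$ with visits at $\delta_r\delta_{r+1}$ and $\delta_s\delta_{s+1}$ ($r<s$), the relation-fullness of $\delta_{r+1}\cdots\delta_s$ gives $\delta_s\delta_{r+1}\in I$, and the uniqueness in condition $(3)$ then forces $\delta_{s+1}=\delta_{r+1}$, contradicting the fact that a forbidden path has no repeated arrows (the start/end and the open-thread cases reduce to the same computation, contradicting maximality instead). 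In particular every closed forbidden thread is a simple cycle and meets $v$ at most once, so that keeping a single representative $f_c$ per cycle in $\cF_{\sG}$ produces no collision.

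It then remains to count the local data, which I would organise as $N(v)=(p-M)+(q-M)+M+t=p+q-M+t$, where $t\in\{0,1\}$ records whether $f_v$ is defined. Running through $(p,q)\in\{0,1,2\}^2$ for $v$ of positive valency, and reading $t$ off the definition of a trivial forbidden thread (so $t=1$ exactly when $v$ has valency one, or when $p=q=1$ with the composite in $I$), yields $N(v)=2$ uniformly: for example $(2,2)$ gives $2+2-2+0$, the cases $(2,1)$ and $(1,2)$ give $2+1-1+0$, a valency-one vertex gives $1+0-0+1$, and the two subcases of $(1,1)$ give $1+1-1+1$ and $1+1-0+0$. Because the previous paragraph identifies $N(v)$ with the number of distinct forbidden threads of $\cF_{\sG}$ incident with $v$, this establishes that there are exactly two. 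I expect the only delicate points to be the interlocking of the trivial-thread conventions with the low-valency cases and the cycle argument ruling out double incidence; everything else is bookkeeping.
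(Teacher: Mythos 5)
Your proof is correct, and it takes a genuinely different route from the paper's. The paper argues by a direct case analysis on the valency of $v$ (one through four), in each case explicitly exhibiting the two forbidden threads of $\cF_{\sG}$ at $v$ and checking ad hoc that they are distinct; the relation-full-cycle hypothesis is invoked only once, in the valency-three case (``if they would coincide, then we would have a cycle which is not relation-full''), with valency four handled ``in the same manner''. You instead set up a uniform count $N(v)=p+q-M+t$, identifying incidences of threads at $v$ with matched pairs, unmatched in- and out-arrows, and the trivial thread $f_v$, and you isolate as a separate structural step the injectivity of this correspondence --- that no single thread of $\cF_{\sG}$ meets $v$ twice. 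Your computation there ($\delta_s\delta_{r+1}\in I$ by relation-fullness, whence $\delta_{s+1}=\delta_{r+1}$ by the uniqueness clause of gentleness, contradicting distinctness of arrows in a forbidden path) is exactly the mechanism the paper uses implicitly, but you apply it uniformly to all double-incidence configurations and obtain, as a by-product, the stronger facts that every closed forbidden thread is a simple cycle (so keeping one representative $f_c$ per cycle creates no collision) and that each thread meets each vertex at most once; the paper leaves these implicit, although they matter later when the tiling algorithm passes from a tile to ``the other'' forbidden thread at a diagonal. Two minor points. First, in the pass-through-plus-endpoint configuration the contradiction is not literally to maximality (extending the thread would repeat an arrow, which the definition of forbidden path already forbids); it is again to the uniqueness clauses: $\delta_k\delta_{r+1}\in I$ and $\delta_r\delta_{r+1}\in I$ force $\delta_k=\delta_r$, which is impossible --- so your ``same computation'' claim is right but the stated mechanism is slightly off. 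Second, your closed subpath $\delta_{r+1}\cdots\delta_s$ need not be a simple cycle, so to invoke the hypothesis cleanly you should choose the two visits realising the minimal gap among all repeated vertices of the thread, which does yield a simple cycle; the paper's own proof glosses the identical point, so this is a shared, easily repaired informality rather than a gap in your argument.
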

\begin{proof}
We have four cases, depending on the valency of $v$.

\Case{$v$ has valency 1} Let $\alpha$ be the arrow incident with $v$. Note that each arrow of the quiver of $\sG$ is used in precisely one forbidden thread in $\cF_{\sG}$. Hence, there is one and only one non-trivial forbidden thread in $\cF_{\sG}$ which uses $\alpha$. There is also one trivial forbidden thread associated to $v$. Clearly, these forbidden threads are distinct. 

\Case{$v$ has valency 2} If $v$ is a sink or a source, then there is no trivial forbidden thread at $v$, and there is a non-trivial forbidden thread for each arrow incident with $v$. Since a forbidden thread is an oriented path, these forbidden threads are distinct. 

Otherwise, let $\alpha, \beta$ be arrows in the quiver of $\sG$ such that $t(\alpha) = v = s(\beta)$. If $\alpha \beta$ is a relation, then there is a trivial forbidden thread at $v$ and a non-trivial forbidden thread using the arrows $\alpha$ and $\beta$. Finally, if there is no relation, then there are distinct forbidden threads using $\alpha$ and $\beta$, and no trivial forbidden thread at $v$.

\Case{$v$ has valency 3} Let $\alpha, \beta, \gamma$ be the arrows incident with $v$.  The vertex $v$ is either the source or the target of two of these arrows. Assume, without loss of generality, that $s(\alpha) = s(\beta) = t(\gamma) = v$. Moreover, assume $\gamma \alpha$ is a relation, and $\gamma \beta$ is not.  

We then have one non-trivial forbidden thread using $\gamma$ and $\alpha$ and one non-trivial forbidden thread using $\beta$. If they would coincide, then we would have a cycle which is not relation-full, contradicting the assumption.

\Case{$v$ has valency 4} Let $\alpha, \beta, \gamma, \delta$ be the arrows incident with $v$, and assume $t(\alpha) = t(\gamma) = s(\beta) = s(\delta) = v$. We must have precisely two relations, say $\alpha \beta$ and $\gamma \delta$. Arguing in the same manner as in the previous case, we have two distinct forbidden threads incident with $v$: one using $\alpha$ and $\beta$ and the other one using $\gamma$ and $\delta$.  
\end{proof}

We are now ready to describe the tiling algorithm, which is an iterative procedure. In each step, we consider a different forbidden thread in the set $\cF_{\sG}$ and construct the associated tile of the final tiling. 

{\it Initial step:} Choose a forbidden thread $f$ in $\cF_{\sG}$, and let $k$ be its length. 
If $f$ is open, draw $k+2$ marked points in a disc, label them by $1, 2, \ldots, k+2$, in the anticlockwise direction and draw diagonals linking $i$ and $i+1$, for $1 \leq i \leq k+1$. 

If $f$ is closed, draw $k$ marked points in a disc, label them by $1, 2, \ldots, k$, in the anticlockwise direction and draw diagonals linking $i$ and $i+1$, for $1 \leq i \leq k$ (where $k+1 = 1$). 

Finally, label the diagonal linking $i$ and $i+1$ by $d_i$.
 
From this initial step results a division of the disc into $x$ regions, $R_1, \ldots, R_{x}$, where $x = k+2$, if $f$ is open and $x = k+1$, if $f$ is closed. The region $R_1$ is bounded by all the diagonals $d_i$, and the only diagonal bounding $R_j$ is $d_{j-1}$, for each $2 \leq j \leq x$.

The region $R_1$ is one of the tiles of the tiling we are constructing, and we say that it is the tile, which we denote by $\mcR_f$, associated to $f$. Note that each diagonal drawn corresponds to a vertex of the path $f$.  

{\it Iterative step:} Let $\mcR_g$ be a tile constructed in a previous step, associated to a forbidden thread $g$, and bounded by a diagonal $d$. By Lemma \ref{lem:2forbidden}, the vertex of $\sG$ corresponding to $d$ is incident with precisely two forbidden threads. Let $g^\prime$ denote the other forbidden thread, and assume $g^\prime$ has not been considered in a previous step.

If $g^\prime$ is trivial, then we take the other region $R^\prime$ bounded by $d$ to be the tile $\mcR_{g^\prime}$ associated to $g^\prime$. 

Now suppose $g^\prime$ is not trivial and let $\ell$ be its length. If $g^\prime$ is open, add $\ell$ marked points in the open boundary of $R^\prime$. Otherwise, add $\ell-2$ marked points in the open boundary of $R^\prime$. 

Then add diagonals in the region $R^\prime$ linking these marked points and the endpoints of the diagonal $d$, in such a way that the quiver arising from the resulting tile $\mcR_{g^\prime}$ gives the forbidden thread $g^\prime$. Note that the diagonals must be added such that the full subquiver on the vertices of $g$ and $g^\prime$ agrees with the subquiver arising from $\mcR_g$ and $\mcR_{g^\prime}$. \\ 

Since $\cF_{\sG}$ is a finite set, this algorithm terminates. The output of this algorithm is a tiling of a marked disc. By construction, and since $\sG$ is connected, the tiling algebra corresponding to this tiling is $\sG$. This then finishes the proof of Proposition \ref{prop:tilinggentle}.  

\begin{remark}
In \cite[Subsection 3.1]{S14}, a procedure for constructing the Brauer graph associated to a gentle algebra is given and applied in the context of surface algebras, which were defined in \cite{DRS}. This construction is essentially a `dual version' of our tiling algorithm, in the sense that it uses permitted threads instead of forbidden threads. 

We expect that, using similar arguments to those used in \cite{S14} in the case of triangulations of a marked Riemann surface, one can prove that the Brauer graph algebra associated to a tiling of a marked disc (seen as a Brauer graph) is isomorphic to the trivial extension of the tiling algebra.
\end{remark} 

A \textit{fan} of a tiling of a marked disc $\mcP$ is the set of all diagonals incident with a marked point of $\mcP$. The following lemma, whose proof follows immediately from the construction of the tiling algebra, will be useful later.
 
\begin{lemma}\label{lem:fan-tiles}
Let $A_\sT$ be the tiling algebra of a tiling $\sT$ of a marked disc $\mcP$.
\begin{compactenum}
\item If there are at least two diagonals in $\sT$, then permitted threads in $A_{\sT}$ are in bijection with the non-isolated marked points of $\mcP$, or equivalently, with fans in $\sT$.
\item If $\sT$ has exactly one diagonal, then both non-isolated marked points of $\mcP$ correspond to the unique (trivial) permitted thread in $A_{\sT}$. 
\item A permitted thread in $A_{\sT}$ is trivial if and only if the corresponding marked point has valency one. 
\end{compactenum}
\end{lemma}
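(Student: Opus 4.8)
The plan is to read off the permitted threads directly from the fan structure around each marked point, using only the definition of $Q_\sT$ and $\sI_\sT$. The first step is the local description of arrows at a marked point: if $d_1, d_2, \ldots, d_r$ are the diagonals incident with a marked point $x$, listed in clockwise order, then the orientation convention in the definition of the tiling algebra produces exactly the arrows $d_1 \to d_2 \to \cdots \to d_r$, where $d_i \to d_{i+1}$ comes from the tile wedged between $d_i$ and $d_{i+1}$ at $x$. Two consecutive fan arrows at $x$ therefore come from the two \emph{distinct} tiles flanking the middle diagonal, so their composition does not lie in $\sI_\sT$; hence each fan with $r \geq 2$ is a permitted path. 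I would record this alongside the complementary fact, which is just the generating relation of $\sI_\sT$: within a single tile, two consecutive boundary arrows compose to zero.

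The heart of the argument is to show these fan paths are maximal and account for all permitted threads. The key input is an orientation-reversal observation for a diagonal $b = \{X,Y\}$: a tile lying on a fixed side of $b$ appears \emph{immediately before} $b$ (clockwise) at one endpoint and \emph{immediately after} $b$ at the other. Granting this, suppose $\alpha\colon a \to b$ is an arrow in a permitted path coming from a tile $\mcT$ at $x$; since $\mcT$ is immediately before $b$ at $x$, it is immediately after $b$ at the far endpoint $Y$, so the continuation of $\alpha$ along $Y$ uses $\mcT$ again and is a relation, while the continuation along $x$ (the next fan arrow) uses a different tile and is permitted. Thus the unique non-relation continuation stays at $x$, and by induction every permitted path consists of consecutive fan arrows at a single fixed marked point. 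The same input gives maximality: the unique arrow into $d_1$ (respectively out of $d_r$) at its far endpoint lies in the same tile as the adjacent fan arrow and so creates a relation, while there is no further arrow on the $x$-side, so the full fan cannot be extended. This identifies the non-trivial permitted threads bijectively with the marked points of valency at least two, the thread determining its common vertex.

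It then remains to match the trivial permitted threads $p_v$ with the valency-one marked points. I would verify the defining cases for a diagonal $d$: if one endpoint of $d$ has valency one, then either $d$ carries a single fan arrow (so $v(d)=1$) or it sits in the interior of the fan at its other endpoint with the two flanking arrows composing outside $\sI_\sT$, so $p_d$ exists in either case. Conversely, the orientation-reversal fact forbids the ``source-one, target-one, non-relation'' configuration from having its two arrows at different endpoints of $d$, since that would force both into a common tile and hence into $\sI_\sT$; so such a $d$ always has a valency-one endpoint. This gives a bijection between trivial threads and valency-one marked points, proving part (1) together with part (3). Part (2) is the degenerate single-diagonal case, handled by inspection: the quiver is one vertex with no arrows, its unique permitted thread is the trivial one at that diagonal, and both endpoints are valency-one non-isolated marked points corresponding to it.

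The main obstacle I anticipate is making the orientation bookkeeping rigorous and uniform: precisely tracking, for each diagonal, which tile is clockwise-before and clockwise-after at each of its two endpoints, and confirming that these are interchanged between the endpoints. Once this local lemma is stated carefully — including the boundary cases where a flanking region of the first or last diagonal in a fan is a boundary segment rather than a tile — both maximality and the converse reduce to short formal checks.
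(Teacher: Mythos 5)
Your argument is correct and is exactly the direct verification from the construction that the paper intends --- the paper states only that the lemma ``follows immediately from the construction of the tiling algebra'' and gives no proof, so your fan-by-fan analysis, with the orientation-reversal observation (a tile on a fixed side of a diagonal is clockwise-before it at one endpoint and clockwise-after it at the other) as the key local fact, supplies precisely the omitted bookkeeping for maximality of fans, for the uniqueness of the non-relation continuation, and for matching trivial threads with valency-one marked points. The only point worth flagging is that your dichotomy in parts (1) and (3) --- that a diagonal appearing in a trivial permitted thread has exactly one valency-one endpoint when $\sT$ has at least two diagonals --- implicitly uses the paper's standing tacit assumption that tiling algebras are connected, which is what rules out a diagonal isolated in the quiver with two valency-one endpoints.
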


\subsection{Connected endomorphism algebras in $\sB_m (A_n)$}\label{subsec:endalg}

We will now turn our attention to the endomorphism algebras of connected maximal $m$-rigid objects in $\sB_m (A_n)$. Our aim is to give a complete description of these algebras in terms of quivers with relations.  

\begin{proposition}\label{prop:endtiling}
Let $T$ be a connected maximal $m$-rigid object of $\sB_m (A_n)$ and $\sT$ the corresponding tiling. The endomorphism algebra $\End_{\sB_m (A_n)} (T)$ is isomorphic to the tiling algebra $A_\sT$. 
\end{proposition}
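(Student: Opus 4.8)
The plan is to exhibit an explicit isomorphism $\End_{\sB_m(A_n)}(T) \cong A_{\sT}$ by matching up the two algebras on generators (idempotents and arrows) and then verifying that the relations coincide. Since both algebras are basic and finite-dimensional, it suffices to establish a bijection between the indecomposable summands of $T$ and the vertices of $Q_{\sT}$, a bijection between irreducible morphisms (i.e.\ arrows of the Gabriel quiver of $\End(T)$) and arrows of $Q_{\sT}$, and then to check that the defining relations agree. The vertex bijection is immediate: by definition the vertices of $Q_{\sT}$ are the interior diagonals of $\sT$, which are precisely the $(m+1)$-diagonals corresponding to the indecomposable summands of $T$.

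First I would identify the arrows. On the tiling-algebra side, an arrow $\alpha \to \beta$ occurs exactly when the diagonals $\alpha,\beta$ share a marked point $x$, lie in a common tile, and $\beta$ is obtained from $\alpha$ by the minimal clockwise rotation about $x$. On the category side, I must show there is a corresponding irreducible morphism $\alpha \to \beta$ in $\add T$, and that these exhaust all irreducible morphisms. The natural tool here is the combinatorial model for $\sB_m(A_n)$ from Section~\ref{sec:background}: recall that in $\Gamma(n,m)$ there is an arrow $D \to D'$ precisely when $D'$ is obtained from $D$ by rotating clockwise $m+1$ steps about a common vertex. The key point is that within a single tile, consecutive diagonals about a shared vertex are separated by exactly $m+1$ boundary steps (this follows from the tile descriptions in Proposition~\ref{prop:possibletiles}, where the diagonals bounding a tile cut off sub-polygons whose vertex-counts are multiples of $m+1$). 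Thus an arrow of $Q_{\sT}$ gives, after composing the appropriate mesh maps in $\Gamma(n,m)$, an irreducible map in $\sB_m(A_n)$ whose source and target are both summands of $T$; conversely, any irreducible map between summands factors through such a minimal clockwise rotation and so must stay within a tile. The $m$-rigidity of $T$ (Proposition~\ref{prop:m-rigidconditions}, no $k$-neighbours and controlled crossings) is what guarantees no unexpected morphisms appear between summands that are \emph{not} joined by a tile edge.

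Next I would match the relations. On the tiling side, $\sI_{\sT}$ is generated by the zero-relations ``two successive arrows from the same tile compose to zero.'' On the category side, I need to show that composing two consecutive irreducible maps around a common vertex within one tile is zero in $\End_{\sB_m(A_n)}(T)$, while the other potential compositions (those turning a corner from one tile into an adjacent tile) are nonzero and give the remaining multiplication. This is essentially a $\Hom$-space computation in $\sB_m(A_n)$: using Lemma~\ref{lemma:exthammockconditions} to control where nonzero $\Ext$'s lie, and the analogous $\Hom$-hammock description (via the $\tau\Sigma^{m+1}$ orbit structure), one sees that composing past two rotation-steps within a tile lands a diagonal ``too far'' for the composite to be nonzero, whereas the commutativity/mesh relations in $\Gamma(n,m)$ force the tile-to-tile compositions to survive. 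I would verify that the algebra generated by the irreducible maps modulo these vanishing relations already has the correct dimension, matching $\dim_\kk A_{\sT}$, which then forces the surjection $A_{\sT} \twoheadrightarrow \End(T)$ to be an isomorphism.

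The main obstacle I anticipate is the relation-matching step, specifically verifying that the composite of two successive arrows within a tile genuinely vanishes in the orbit category and that no relations beyond these are needed. This requires a careful analysis of $\Hom$-spaces (not merely $\Ext^k$-spaces) in $\sB_m(A_n)$, which the excerpt has only developed implicitly; one must translate the mesh relations of the AR-quiver $\Gamma(n,m)$ into statements about compositions of basis maps, and confirm that the orbit construction by $\tau\Sigma^{m+1}$ does not introduce extra identifications among morphisms (for instance, via a fundamental-domain argument ensuring the relevant $\Hom$-spaces are one-dimensional). A clean way to organise this is to proceed tile-by-tile, since each tile-type in Proposition~\ref{prop:possibletiles} contributes a controlled local configuration of diagonals, and to invoke Lemma~\ref{lem:fan-tiles} to see that the permitted threads (maximal nonzero paths) of $A_{\sT}$ correspond exactly to the fans of diagonals about a common marked point, which is precisely where nonzero compositions of irreducible maps are supported in $\End_{\sB_m(A_n)}(T)$.
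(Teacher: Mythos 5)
Your proposal is correct and takes essentially the same approach as the paper: the paper's own proof is simply a pointer to \cite[Theorem 5.2]{CS11}, whose argument runs exactly as you outline (vertices $\leftrightarrow$ summands, arrows $\leftrightarrow$ irreducible maps in $\add T$ between fan-adjacent diagonals, tile relations verified by Hom-space computations), with the Hom-hammock input you correctly identify as the main missing ingredient being supplied there by \cite[Lemma 8.4]{CS11}. One minor imprecision that your hedge ``composing the appropriate mesh maps'' already absorbs: fan-adjacent diagonals in a tile are separated by a \emph{multiple} of $m+1$ boundary steps, not exactly $m+1$, so an arrow of $Q_{\sT}$ corresponds to a composite of mesh maps in $\Gamma(n,m)$ that is irreducible in $\add T$ precisely because the intermediate diagonals in the rotation are not summands of $T$.
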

\begin{proof}
The proof is similar to that of \cite[Theorem 5.2]{CS11}, taking into account \cite[Lemma 8.4]{CS11}, for the computation of Hom-spaces in $\sB_m (A_n)$. 
\end{proof}

\begin{remark}
We can apply the arguments in the proof of Proposition \ref{prop:endtiling} to any maximal $m$-rigid object $T$ to see that the corresponding endomorphism algebra is connected if and only if $T$ is connected.
\end{remark}

As a consequence of Propositions \ref{prop:tilinggentle} and  \ref{prop:endtiling}, the endomorphism algebras under consideration are gentle, with all cycles oriented and relation-full. The following result gives a characterisation of the endomorphism algebras of connected maximal $m$-rigid objects in $\sB_m (A_n)$ in terms of gentle algebras.

\begin{theorem}\label{thm:endalg}
Let $\sG = \kk Q/I$ be a gentle algebra whose cycles are oriented and relation-full. Then $\sG$ is the endomorphism algebra of a connected maximal $m$-rigid object in $\sB_m (A_n)$, for some $m, n \geq 1$, if and only if it satisfies the following conditions: 
\begin{compactenum}[(i)]
\item There is no permitted thread whose both source and target have valency one.
\item If $f$ is a forbidden path of length $m+1$ which cannot be completed to a closed forbidden thread, then $s(f)$ or $t(f)$ has valency one.
\item The length of every simple cycle is $m+3$.
\item Given $x \in Q_0$ of valency two such that $\alpha \beta \in I$, where $t(\alpha) = x = s(\beta)$, there are permitted threads $p_1, p_2$ with $t(p_1) = x = s(p_2)$ and $v(s(p_1)) = v(t(p_2)) = 1$.
\item If there is a forbidden thread $f$ of length $m+1$ with $v(t(f)) = 1$ (resp. $v(s(f)) = 1$), then there is a permitted thread $p$ such that $s(p) = s(f)$ and $v(t(p)) = 1$ (resp. $t(p) = t(f)$ and $v(s(p)) = 1$). 
\item There is no permitted thread $p$ with: $t(p) = t(f_1)$, $s(p) = s(f_2)$, for some open forbidden threads $f_1, f_2$ of lengths $\ell_1, \ell_2$, respectively, where $\ell_i \geq 2$ and $\ell_1 + \ell_2 \geq m+2$.  
\end{compactenum} 
\end{theorem}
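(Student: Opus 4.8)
The plan is to deduce the theorem from the geometric classification of Theorem~\ref{thm:maximalmrigid} by translating it, term by term, through the tiling algorithm. Concretely: since $\sG$ is gentle with every cycle oriented and relation-full, Proposition~\ref{prop:tilinggentle} identifies it with the tiling algebra $A_{\sT_{\sG}}$ of the tiling $\sT_{\sG}$ output by the algorithm, and Proposition~\ref{prop:endtiling} shows that $\sG$ is the endomorphism algebra of a connected maximal $m$-rigid object in $\sB_m(A_n)$ precisely when $\sT_{\sG}$ can be realised as a tiling of some $\mcP_{n,m}$ whose tiles are those listed in Proposition~\ref{prop:possibletiles} and which satisfies conditions (1)--(3) of Theorem~\ref{thm:maximalmrigid}. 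So the entire content is to show that, for a fixed $m$ (forced by condition (iii) whenever a cycle is present) and the $n$ recovered from the number of marked points, conditions (i)--(vi) on $\sG$ are equivalent to this geometric description of $\sT_{\sG}$; I would verify both implications at once, condition by condition.

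First I would set up the dictionary that drives every translation. Under the algorithm the vertices of $Q$ are the diagonals of $\sT_{\sG}$; the tiles are in bijection with the forbidden threads of $\cF_{\sG}$ (Lemma~\ref{lem:2forbidden}), a closed thread giving a tile with no open boundary and an open thread of length $\ell$ giving a tile of length $\ell+1$; and by Lemma~\ref{lem:fan-tiles} the permitted threads are the fans, i.e.\ the non-isolated marked points, with source and target the two extreme diagonals of the fan. The key geometric--algebraic identification is that a diagonal is short exactly when, as a vertex of $Q$, it has valency one: by Lemma~\ref{lemma:cycles}(1) and Remark~\ref{rem:diagonalsintiles} a short diagonal bounds a $\mcT_1$-region on one side and is an outer-diagonal of its other tile, hence carries a single arrow, while a long diagonal is either interior to a tile or bounds two genuine tiles and so has valency at least two. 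Finally, the relation $b = m + \ell - 1$ between the open-boundary length $b$ and the tile length $\ell$ established in the proof of Proposition~\ref{prop:possibletiles}, together with the way a short outer-diagonal shortens an open boundary, shows that once $m$ is fixed the open-boundary lengths---hence the placement of the isolated vertices---are determined by the forbidden-thread data; in particular no consistency conditions beyond (i)--(vi) are required.

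With this dictionary, most conditions translate directly. Condition (iii) says the closed forbidden threads, i.e.\ the simple cycles, all have length $m+3$, which is exactly the requirement that the cycle-tiles be of type $\mcT'_{m+3}$. Applying condition (ii) to the two length-$(m+1)$ subpaths of an open forbidden thread forces a maximal open thread of length $m+1$ to have a short outer-diagonal (type $\mcT_{m+2}$), a maximal open thread of length $m+2$ to have both outer-diagonals short (type $\mcT_{m+3}$, using that interior diagonals are long), and rules out any open thread of length $\geq m+3$ (whose middle length-$(m+1)$ subpath would have two long endpoints); together with (iii) this pins down precisely the tile list of Proposition~\ref{prop:possibletiles}. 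Conditions (iv) and (v) are the two manifestations of condition (1) of Theorem~\ref{thm:maximalmrigid}: a $\mcT'_1$-tile is a trivial forbidden thread at a valency-two vertex $x$ carrying a relation, and (iv) forces the two fans at $x$ to terminate in short diagonals, which is exactly the demand that the short diagonals $\{i_0-m,i_0\}$ and $\{i_{2m+1},i_{2m+1}+m\}$ bound its length-$(2m+1)$ open boundary; condition (v) imposes the same for a $\mcT_{m+2}$-tile, one of whose outer-diagonals is already short. Condition (i), read through the fan at a marked point of valency at least two, is the statement that no two short (valency-one) diagonals meet at a common marked point, i.e.\ condition (2) of Theorem~\ref{thm:maximalmrigid}.

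The delicate point, which I expect to be the main obstacle, is the equivalence of condition (vi) with condition (3) of Theorem~\ref{thm:maximalmrigid}. Here I would interpret a forbidden sequence $v_1, \dots, v_\ell$ as a single non-isolated vertex $v_k$ whose fan is a permitted thread $p$ flanked by two open tiles with forbidden threads $f_1, f_2$, so that $t(p) = t(f_1)$ and $s(p) = s(f_2)$; converting the geometric requirements ``at least $m+1$ isolated vertices on each side of $v_k$'' and ``$\ell \geq 3m+1$'' through the forced formula $b = m + \ell - 1$ yields exactly the numerical conditions $\ell_i \geq 2$ and $\ell_1 + \ell_2 \geq m+2$, while the possibility of inserting a short diagonal at $v_k$ that crosses its fan and preserves rigidity is precisely the failure of maximality detected by Theorem~\ref{thm:maximalmrigid}(3). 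The careful bookkeeping of isolated vertices here (and in matching the exceptional open-boundary lengths caused by short outer-diagonals) is where the real computation lies. Once all six translations are in hand, the forward direction simply reads conditions (i)--(vi) off the geometry supplied by Theorem~\ref{thm:maximalmrigid}, and the reverse direction feeds the reconstructed tiling $\sT_{\sG}$, with its forced isolated vertices, back through Theorem~\ref{thm:maximalmrigid} and Propositions~\ref{prop:endtiling} and~\ref{prop:tilinggentle} to produce a connected maximal $m$-rigid object $T$ with $\End_{\sB_m(A_n)}(T) \cong \sG$, completing the proof.
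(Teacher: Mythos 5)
Your proposal is correct and follows essentially the same route as the paper's own proof: both identify $\sG$ with the tiling algebra produced by the tiling algorithm (Proposition~\ref{prop:tilinggentle}), invoke Proposition~\ref{prop:endtiling}, and translate the tile list of Proposition~\ref{prop:possibletiles} together with conditions $(1)$--$(3)$ of Theorem~\ref{thm:maximalmrigid} into $(i)$--$(vi)$ via the same dictionary (valency-one vertices correspond to short diagonals, forbidden threads to tiles, permitted threads to fans by Lemma~\ref{lem:fan-tiles}), including the same use of $(ii)$ to bound open forbidden threads by length $m+2$ and the same fan-based bookkeeping equating $(vi)$ with condition $(3)$. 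Your reconstruction of the isolated-vertex counts from the tile lengths (with the exceptional cases for $\mcT'_1$ and $\mcT_{m+3}$) matches the paper's converse step, so no gap remains.
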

\begin{proof}
Suppose $\sG$ is the endomorphism algebra of a connected maximal $m$-rigid object $T$ in $\sB_m (A_n)$, for some $n$. Then $\sG$ is the tiling algebra of a tiling with tiles as in Proposition \ref{prop:possibletiles} satisfying conditions $(1), (2)$ and $(3)$ in Theorem \ref{thm:maximalmrigid}. 

Since vertices of $Q$ of valency one correspond to short diagonals in the tiling, the algebra $\sG$ satisfies $(i)$ due to condition $(2)$. Condition $(iii)$ is satisfied, because cycles in $Q$ correspond to closed tiles in the tiling, and by Proposition \ref{prop:possibletiles}, closed tiles have length $m+3$. 

Now let $f$ be a forbidden path in $\sG$ of length $m+1$ which cannot be completed to a closed forbidden thread. Then $f$ is either a forbidden thread corresponding to a tile of type $(\mcT_{m+2})$ or there is an arrow $\alpha$ in $Q_1$ such that $f \alpha$ or $\alpha f$ is a forbidden thread corresponding to a tile of type $(\mcT_{m+3})$. Either way, the source or the target correspond to a short diagonal, meaning it has valency one. Hence $(ii)$ is satisfied. 

If a vertex has valency two with a relation, then it corresponds to the diagonal bounding a tile of type $(\mcT^\prime_1)$. Then condition $(1)$ implies $(iv)$.

Let $f$ be a forbidden thread of length $m+1$ such that $t(f)$ or $s(f)$ has valency one. Then $f$ corresponds to a tile of type $(\mcT_{m+2})$. The required permitted thread exists due to condition $(1)$. Hence, $(v)$ is also satisfied. 

Finally, suppose $(vi)$ does not hold. By Lemma \ref{lem:fan-tiles}, we have a fan at vertex $v$, say, and two tiles $\mcT_1, \mcT_2$, corresponding to $f_1$ and $f_2$, respectively, which satisfy the following: 
\begin{compactitem}
\item $\mcT_1, \mcT_2$ must be of type $(\mcT_{k_i})$, with $3 \leq k_i \leq m+2$, since the $f_i$ are open, $v(s(f_1)), v(t(f_2)) \geq 2$ and the length $\ell_i$ of $f_i$ is such that $k_i -1 = \ell_i \geq 2$, for $i = 1, 2$, 
\item $\mcT_1$ (resp. $\mcT_2$) have $x := m+k_1 -2$ (resp. $x^\prime := m+k_2 -2$) isolated vertices. In particular, $\mcT_1$ and $\mcT_2$ have at least $m+1$ isolated vertices in their open boundary.
\item $x + x^\prime +1 \geq 3m +1$, since $\ell_1 +\ell_2 \geq m+2$. 
\end{compactitem}

Hence, (3) does not hold, a contradiction. Therefore, $\sG$ satisfies conditions $(i), \ldots, (vi)$. 

Conversely, let $\sG$ be a gentle algebra such that cycles are oriented and relation-full and conditions $(i), \ldots, (vi)$ are satisfied for some $m \geq 1$. By Proposition~\ref{prop:tilinggentle}, $\sG$ is a tiling algebra. Let $\sT$ be a tiling of a marked disc corresponding to $\sG$ via the tiling algorithm. Recall that open forbidden threads correspond to open tiles. 

{\it Claim 1:} If $f$ is an open forbidden thread of length $m+2$, then $v(s(f)) = v(t(f)) = 1$

Indeed, given an open forbidden thread $f = \alpha_1 \cdots \alpha_{m+2}$ of length $m+2$, we have that $v(s(\alpha_1)) = 1$ or $v(t(\alpha_{m+1})) = v(s(\alpha_{m+2})) = 1$, by condition $(ii)$. But $v(t(\alpha_{m+1})) \geq 2$, and so $v(s(f)) = v(s(\alpha_1)) = 1$. 

Similarly, by condition $(ii)$ we have $v(s(\alpha_2)) = 1$ or $v(t(\alpha_{m+2})) = 1$. However, $v(s(\alpha_2)) \geq 2$, and so $v(t(f)) = v(t(\alpha_{m+2})) = 1$, which proves the claim. 

{\it Claim 2:} The maximum length of an open forbidden thread in $\sG$ is $m+2$. 

Indeed, suppose $f = \alpha_1 \cdots \alpha_\ell$ is an open forbidden thread of length $\ell \geq m+3$. By condition $(ii)$, we have that the valency of $s(\alpha_2)$ or $t(\alpha_{m+2})$ must be one, a contradiction. 

By claim 2, each open tile of $\sT$ has at most length $m+3$. Let $\mcT$ be an open tile with length $k$, where $1 \leq k \leq m+3$. 

If $2 \leq k \leq m+2$ (resp. $k = m+3$), add $m+k-2$ (resp. $m$) isolated vertices in the open boundary of $\mcT$. For $k = 1$, add $m-1$ (resp. $2m$) isolated vertices in the open boundary of $\mcT$ if the vertex in $Q$ corresponding to the interior arc bounding $\mcT$ has valency one (resp. two). 

Due to claim 1 and conditions $(ii)$ and $(iii)$, we can easily deduce that the tiling $\sT$ is formed by tiles of types $(\mcT_k)$, with $1 \leq k \leq m+3$, $(\mcT^\prime_1)$ or $(\mcT_{m+3}^\prime)$. 

By condition $(i)$, there are no adjacent short diagonals in $\sT$. Hence (2) is satisfied. 

The only cases where there are $2m$ consecutive isolated vertices are in tiles of type $(\mcT^\prime_1)$ or $(\mcT_{m+2})$. In tiles of type $(\mcT^\prime_1)$, the bounding interior arc corresponds to a vertex of valency $2$, and the arrows incident with this vertex arise from the other tile incident with the interior arc, and so there is a relation. By $(iv)$, we have the short arcs required in condition (1). In tiles of type $(\mcT_{m+2})$, one of the short arcs is already a bounding interior arc of the tile, and the existence of the other short arc is guaranteed by condition $(v)$. Hence $\sT$ satisfies (1). 

Finally, suppose $\sT$ does not satisfy (3). As we have seen above, this would imply that $\sG$ does not satisfy $(vi)$, a contradiction.    
\end{proof}

\section{Applications}

In the final section of this article, we demonstrate some ways in which the nice combinatorial presentation of the endomorphism algebras of connected maximal $m$-rigid objects in $\sB_m (A_n)$ can be used to get a better understanding of these algebras. In the first subsection, we will relate them to (higher) cluster-tilted algebras of type $A$, and in the remaining two subsections, we will study two homological properties, namely Gorenstein dimensions and AG-invariance. We in fact study these latter two properties in the more general framework of tiling algebras, before then applying the results to our endomorphism algebras.

\subsection{Relationship with $(m+1)$-cluster-tilted algebras}\label{sec:clustertilted}

Using the geometric model of $(m+1)$-cluster categories of type $A_n$ from \cite{BM}, we can view $(m+1)$-cluster-tilted algebras of type $A_n$ as tiling algebras of a disc with $(n+1)(m+1)+2$ marked points, whose tiles are $(m+3)$-gons (see \cite{Murphy}). In order to compare our endomorphism algebras with these algebras, we need to recall the notion of a cut of a quiver. 

\begin{definition}\cite{Fernandez}
Let $Q$ be a quiver and $C$ the set of all full subquivers of $Q$ given by simple cycles. Any subset of the set of arrows lying in $C$ is called a {\it cut} of $Q$.

Let $A = \kk Q / I$ be a quotient of the path algebra of $Q$ by an admissible ideal $I$. An algebra is said to be obtained from $A$ by a cut if it is isomorphic to a quotient $\kk Q /\langle I \cup C \rangle$, where $C$ is a cut of $Q$.  
\end{definition} 

\begin{proposition}
Let $T$ be a connected maximal $m$-rigid object in $\sB_m (A_n)$ given by the tiling $\sT$ of $\mcP_{n,m}$, and let $A_{\sT}$ be the corresponding endomorphism algebra. Denote by $n_k$ the number of tiles in $\sT$ of type $(\mcT_k)$, for each $1 \leq k \leq m+3$, and by $n^\prime_1$ the number of tiles in $\sT$ of type $(\mcT_1^\prime)$. 
\begin{compactenum}
\item If $n_{m+3} = 0$, then $A_{\sT}$ is an $(m+1)$-cluster-tilted algebra of type $A_{n^\prime}$, with $n^\prime = n+\frac{x-4}{m+1}$, where $x \coloneqq (1-m) n_1^\prime - \sum\limits_{k=1}^{m+2} (2k-4) n_k$. 
\item If $n_{m+3} \neq 0$, then there is an $(m+1)$-cluster-tilted algebra of type $A_{n^{\prime \prime}}$, with $n^{\prime \prime} = n-n_{m+3}+\frac{x-4}{m+1}$, and $x$ as above, from which $A_{\sT}$ can be obtained via a cut.
\end{compactenum}
\end{proposition}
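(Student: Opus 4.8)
The plan is to realise both cases by deforming the tiling $\sT$ of $\mcP_{n,m}$ into one all of whose tiles are $(m+3)$-gons, since by \cite{BM, Murphy} the $(m+1)$-cluster-tilted algebras of type $A_{N}$ are exactly the tiling algebras of $(m+3)$-angulations of a disc with $(N+1)(m+1)+2$ marked points, i.e.\ the tiling algebras whose every tile is an $(m+3)$-gon. Among the tiles of Proposition \ref{prop:possibletiles}, only the closed tiles $(\mcT'_{m+3})$ and the tiles $(\mcT_2)$ are already $(m+3)$-gons; every other tile differs from an $(m+3)$-gon only in the number of boundary edges on its (unique) open boundary, while carrying the same interior diagonals. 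The key observation is that adding or deleting isolated marked points on an open boundary changes neither the set of diagonals nor their incidences inside tiles, and hence leaves the tiling algebra unchanged.

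For part $(1)$, where $n_{m+3}=0$, I would turn each tile into an $(m+3)$-gon by adjusting its open boundary. A tile of type $(\mcT_k)$, $1\leq k\leq m+2$, has $k$ diagonals and $m+k-1$ boundary edges, so I add $4-2k$ isolated marked points (deleting $2k-4$ of them when $k\geq 3$, which is possible since such a tile carries $m+k-2\geq 2k-4$ isolated vertices), leaving $m+3-k$ boundary edges; likewise a tile $(\mcT'_1)$ loses $m-1$ of its $2m$ isolated vertices. The result $\sT''$ is an $(m+3)$-angulation of a disc with $V'=V+\sum_{\mcT}\Delta(\mcT)$ marked points, where $V=(m+1)(n+1)-2$ and $\Delta(\mcT)$ is the signed number of points added to $\mcT$; summing gives $\sum_{\mcT}\Delta(\mcT)=(1-m)n_1'-\sum_{k=1}^{m+2}(2k-4)n_k=x$. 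Since the diagonals are untouched, $A_\sT=A_{\sT''}$ is $(m+1)$-cluster-tilted of type $A_{n'}$, and solving $V'=(n'+1)(m+1)+2$ yields $n'=(V'-m-3)/(m+1)=n+\tfrac{x-4}{m+1}$.

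For part $(2)$, where $n_{m+3}\neq 0$, the obstruction is that a tile of type $(\mcT_{m+3})$ has $m+3$ diagonals $\{v_1,v_2\},\dots,\{v_{m+3},v_{m+4}\}$ forming a \emph{path} and an open boundary of length $m+1$ with only $m$ isolated vertices, so it cannot be made into an $(m+3)$-gon by deleting marked points alone. Instead I would \emph{close it up}: contract its open boundary arc, deleting the $m$ isolated vertices and identifying $v_{m+4}$ with $v_1$, so that $\{v_{m+3},v_{m+4}\}$ and $\{v_1,v_2\}$ now meet and the $m+3$ diagonals form a closed $(m+3)$-gon. This removes $m+1$ marked points per such tile and, on the quiver, adjoins a single arrow completing the length-$(m+2)$ open forbidden thread into an oriented relation-full cycle of length $m+3$. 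Transforming the remaining tiles as in part $(1)$ gives an $(m+3)$-angulation $\sT''$ with $V''=V+x-(m+1)n_{m+3}$ marked points, so $A_{\sT''}$ is $(m+1)$-cluster-tilted of type $A_{n''}$ with $n''=(V''-m-3)/(m+1)=n-n_{m+3}+\tfrac{x-4}{m+1}$. Finally I would cut the $n_{m+3}$ adjoined arrows (one from each new simple cycle): removing an arrow from a relation-full $(m+3)$-cycle restores exactly the relations of the original length-$(m+2)$ forbidden thread, so $A_\sT$ is obtained from $A_{\sT''}$ by this cut.

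The step I expect to be the main obstacle is the geometric legitimacy of part $(2)$: checking that contracting the open boundary arc of each $(\mcT_{m+3})$ tile produces a genuine $(m+3)$-angulation of a disc — in particular that identifying $v_1$ with $v_{m+4}$ merges the two fans consistently and introduces no crossings — and confirming through Proposition \ref{prop:tilinggentle} that the resulting $A_{\sT''}$, now having all cycles of length $m+3$ and all open forbidden threads of length at most $m+1$, really is a cluster-tilted algebra. By contrast, the marked-point bookkeeping is routine once the constructions are fixed, being controlled throughout by the boundary relation $\sum_{\mcT}b(\mcT)=V$.
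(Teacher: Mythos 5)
Your proposal is correct and takes essentially the same route as the paper's proof: convert every tile into an $(m+3)$-gon by adjusting the number of isolated marked points on its open boundary (with identical bookkeeping giving $x$, the new number of marked points, and hence $n^\prime$, $n^{\prime\prime}$), and handle the tiles of type $(\mcT_{m+3})$ -- which have only $m$ isolated vertices and so cannot be shrunk to $(m+3)$-gons -- via the cut construction. The only cosmetic difference is directional: you contract each $(\mcT_{m+3})$ tile into a closed $(m+3)$-gon and then cut the adjoined arrow, whereas the paper phrases the inverse operation (a closed $(m+3)$-gon yields a $(\mcT_{m+3})$ tile by a cut followed by adding $m$ isolated vertices); your additional check that the identified endpoints have valency one (forced because both outer-diagonals of such a tile are short) is exactly the detail the paper leaves as ``easily seen''.
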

\begin{proof}
By Theorem \ref{thm:maximalmrigid}, $\sT$ is made of tiles of type $(\mcT_k)$, with $1 \leq k \leq m+3$, $(\mcT_1^\prime)$ or $(\mcT^\prime_{m+3})$. By changing the number of isolated vertices, we can make each tile (except those of type $(\mcT_{m+3})$) into an $(m+3)$-gon. Indeed, each tile of type $(\mcT_k)$, with $1 \leq k \leq m+2$ is a $(2k+m-1)$-gon with $m+k-2$ isolated vertices. Hence, if we remove $2k-4$ of these isolated vertices (which means adding two vertices for $k=1$), the tile becomes an $(m+3)$-gon. Tiles of type $(\mcT^\prime_{m+3})$ are already $(m+3)$-gons, so no changes need to be made to these. Finally, tiles of type $(\mcT_1^\prime)$  are $(2m+2)$-gons, and so by removing $m-1$ of their $2m$ isolated vertices, we obtain an $(m+3)$-gon. 

If $\sT$ has no tiles of type $(\mcT_{m+3})$, then by altering the number of isolated vertices as explained above, we get an $(m+3)$-angulation of an $N$-gon, where $N \coloneqq (n+1) (m+1)-2 - (m-1)n_1^\prime - \sum\limits_{k=1}^{m+2} (2k-4) n_k$. Note that $N$ must be 2 modulo $m+1$ (cf. \cite[Lemma 2.7]{Murphy}). Indeed, it can be written in the form $(m+1) (n^\prime +1) +2$, where $n^\prime$ is as in the statement of the theorem. 

Tiles of type $(\mcT_{m+3})$ are $(2m+4)$-gons, and so we would have to remove $m+1$ isolated vertices from such a tile in order to convert it into an $(m+3)$-gon. However, this is impossible since such a tile only has $m$ isolated vertices. But it is easily seen that this type of tile can be obtained from a closed $(m+3)$-gon by performing a cut, followed by adding $m$ isolated vertices in the open boundary.  
\end{proof}

\subsection{Gorenstein dimension}\label{sec:Gorenstein}

A gentle algebra $\sG$ has finite injective dimension as both a left and a right $\sG$-module (cf. \cite{GR}). This dimension is called the {\it Gorenstein dimension} of $\sG$, and we denote it by $Gdim (\sG)$. This concept took its inspiration from commutative ring theory, and has been a subject of interest in cluster-tilting theory.

By applying a result of \cite{GR}, we are now able to calculate the Gorenstein dimensions of tiling algebras and, in particular, of the endomorphism algebras we consider. We start by stating the result of \cite{GR} that we will apply, noting that a {\it gentle arrow} $\alpha$ in $\sG$ is an arrow for which there is no $\alpha_0$ such that $\alpha_0 \alpha$ is a relation. 

\begin{theorem}\cite{GR}\label{thm:GR}
Let $\sG$ be a gentle algebra, and let $n (\sG)$ be the maximum length of a forbidden path starting with a gentle arrow, or zero if there are no gentle arrows. 
\begin{compactenum}
\item $n(\sG)$ is smaller or equal to the number of arrows in $\sG$.
\item If $n(\sG) > 0$, then $\sG$ has Gorenstein dimension $n(G)$.
\item If $n(\sG) = 0$, then $\sG$ has Gorenstein dimension at most one. 
\end{compactenum}
\end{theorem}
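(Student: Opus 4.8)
Since this result is attributed to \cite{GR}, what follows is a sketch of the strategy one would use to establish it rather than a self-contained argument. The plan is to compute the injective dimension of $\sG$ as a module over itself; as $\sG$ is Iwanaga--Gorenstein (its one-sided injective dimensions are finite and agree), this number is the Gorenstein dimension. First I would reduce to the indecomposable summands: the injective dimension of $\sG$ as a right module over itself is the maximum, over the vertices $v$ of $Q$, of the injective dimensions of the indecomposable projectives $P_v$. Because $\sG$ is gentle, hence special biserial, each $P_v$ and each indecomposable injective is a string module admitting an explicit combinatorial description in terms of the permitted and forbidden paths through $v$, and it is this description that makes the computation tractable.

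Next I would build, for each $v$, the minimal injective coresolution $0 \to P_v \to I^0 \to I^1 \to \cdots$ by iterated computation of cosyzygies. The key claim is that each step advances one arrow along a forbidden path: passing from $I^{i}$ to $I^{i+1}$ consumes exactly one relation $\alpha_i \alpha_{i+1} \in I$, so that the length of the coresolution of $P_v$ equals the length of the longest forbidden path readable from the top of $P_v$. The role of the \emph{gentle arrow} condition is precisely to detect when this coresolution is nontrivial: an arrow $\alpha$ for which there is no $\alpha_0$ with $\alpha_0 \alpha \in I$ is exactly one that can initiate a fresh forbidden path not forced by an earlier relation, and it is such paths that contribute to the injective dimension. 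Taking the maximum over all $v$ then identifies the injective dimension of $\sG$ with $n(\sG)$, the maximal length of a forbidden path starting with a gentle arrow.

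Finally I would deduce the three assertions. For (1), I would invoke the fact that a forbidden path never repeats an arrow (as in the definition of forbidden thread), so its length is bounded by the total number of arrows, whence $n(\sG)$ is at most that number. For (2), the cosyzygy analysis above yields injective dimension exactly $n(\sG)$ whenever $n(\sG) > 0$. For (3), the equality $n(\sG) = 0$ forces the absence of gentle arrows, so every arrow is preceded by a relation; in this self-injective-like situation one checks directly that each $P_v$ embeds in an injective with cokernel of injective dimension zero, giving injective dimension at most one.

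The main obstacle will be the cosyzygy computation: verifying, with the full string-module combinatorics for gentle algebras, that the minimal injective coresolution of $P_v$ is governed step-by-step by forbidden paths and terminates exactly when the forbidden path can no longer be extended. Establishing minimality, and the precise matching between cosyzygies and relations, is the technical heart of the argument, and is the content carried out in \cite{GR}.
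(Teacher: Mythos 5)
The paper offers no proof of this statement: it is quoted directly from \cite{GR} as background, so there is no internal argument to compare against. Your sketch faithfully reproduces the strategy Geiss and Reiten actually use: reduce the injective dimension of $\sG$ to the maximum of the injective dimensions of the indecomposable projectives, compute minimal injective coresolutions using the string-module combinatorics of gentle algebras, and observe that cosyzygies extend along forbidden paths, with gentle arrows (arrows not occurring as the second factor of a relation) detecting exactly when a coresolution genuinely lengthens; part (1) then follows because a forbidden path repeats no arrow, and part (3) because $n(\sG)=0$ forces every arrow to be preceded by a relation. One small simplification to be aware of: the cosyzygy of a string module can decompose into several string summands, so the clean picture of ``consuming exactly one relation per step'' really amounts to tracking how each end of a string evolves, which is precisely the bookkeeping carried out in \cite{GR}; since you explicitly defer this technical heart to the reference, your proposal is at the appropriate level of detail for a cited theorem.
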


It was shown in \cite[Theorem 2.7]{ABCP} that cluster-tilted algebras of type $A$ have Gorenstein dimension one. The following gives a further result in this direction, and can be considered as a partial generalisation.

\begin{lemma}\label{lem:Gorenstein}
Let $\sT$ be a tiling of a marked disc with an open tile of length at least two. Let $k$ be the maximum length of an open tile of $\sT$, and let $A_{\sT}$ be the tiling algebra associated to $\sT$. Then, $Gdim (A_{\sT}) = k-1$. 
\end{lemma}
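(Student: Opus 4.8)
The plan is to reduce the whole computation to the Geiss--Reiten statement recorded in Theorem~\ref{thm:GR}. Since $A_\sT$ is gentle with every cycle oriented and relation-full (Proposition~\ref{prop:tilinggentle}), it suffices to compute the invariant $n(A_\sT)$, the maximal length of a forbidden path starting with a gentle arrow, and to show that it equals $k-1$. The conclusion $Gdim(A_\sT) = k-1$ will then follow from part (2) of Theorem~\ref{thm:GR}, provided we also check that $n(A_\sT) \geq 1 > 0$, which will hold because $\sT$ has an open tile of length at least two.

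First I would pin down which arrows of $A_\sT$ are gentle, using that each arrow of $Q$ lies in exactly one forbidden thread of $\cF_{\sG}$ (as in the proof of Lemma~\ref{lem:2forbidden}). For a \emph{closed} forbidden thread, relation-fullness means that every arrow $\alpha_i$ is preceded by an arrow $\alpha_{i-1}$ with $\alpha_{i-1}\alpha_i \in I$, so no arrow of a closed thread is gentle. For an \emph{open} forbidden thread $\alpha_1 \cdots \alpha_\ell$, each of $\alpha_2, \dots, \alpha_\ell$ has a relation-forming predecessor inside the thread, whereas $\alpha_1$ does not: maximality of the thread together with condition (3) of the definition of a gentle algebra (at most one $\mu$ with $\mu\alpha_1 \in I$) forces the absence of any $\alpha_0$ with $\alpha_0\alpha_1 \in I$. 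Hence the gentle arrows of $A_\sT$ are precisely the initial arrows of open forbidden threads. Consequently any forbidden path starting with a gentle arrow is an initial subpath of the (unique) open forbidden thread containing that arrow, so the longest such path is a full open forbidden thread, and $n(A_\sT)$ equals the maximal length of an open forbidden thread.

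Next I would translate this into tiling language via the tiling algorithm. In the initial/iterative step, an open forbidden thread of length $\ell$ is realised by $\ell+1$ diagonals, all of which bound the associated tile $\mcR_f$; that is, it produces an open tile of length $\ell+1$, and conversely every open tile of length $\ell+1$ arises from an open forbidden thread of length $\ell$. Thus the maximal length of an open forbidden thread is exactly one less than the maximal length of an open tile, giving $n(A_\sT) = k-1$. Since $k \geq 2$ by hypothesis, the first arrow of a longest open thread is a gentle arrow starting a forbidden path of length $k-1 \geq 1$, so $n(A_\sT) > 0$, and Theorem~\ref{thm:GR}(2) yields $Gdim(A_\sT) = k-1$.

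The main obstacle I anticipate is the careful bookkeeping underlying the two identifications above: first, that gentle arrows correspond exactly to the initial arrows of open forbidden threads and that a forbidden path starting at a gentle arrow can never close up inside an oriented relation-full cycle; and second, the off-by-one relation between the length of an open forbidden thread and the length of the corresponding open tile. Getting these indices right, and verifying that closed threads contribute nothing to $n(A_\sT)$, is where the argument must be made precisely; the rest is a direct appeal to Theorem~\ref{thm:GR}.
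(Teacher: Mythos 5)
Your proof is correct and takes essentially the same route as the paper: both arguments reduce to computing $n(A_{\sT})$ for Theorem~\ref{thm:GR}, identify the gentle arrows as the initial arrows of open forbidden threads (equivalently, the threads arising from open tiles with at least two bounding diagonals), and use the off-by-one correspondence between the length of an open forbidden thread and the length of its tile to get $n(A_{\sT}) = k-1 \geq 1$. The only difference is one of detail: the paper asserts these identifications in a sentence, whereas you verify them carefully (in particular, that relation-full cycles contribute no gentle arrows and that a forbidden path from a gentle arrow cannot close up), which is a welcome tightening rather than a different method.
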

\begin{proof}
Because there is an open tile with at least two interior bounding arcs, $\sT$ has a gentle arrow. In fact, the forbidden threads of $\sT$ starting with a gentle arrow are precisely those arising from such tiles. Since the length of each such forbidden thread is given by subtracting one from the length of the corresponding tile, we have $n (A_{\sT}) = k-1 \geq 1$, and the result follows immediately by Theorem~\ref{thm:GR}.
\end{proof}

We note that it is immediate from Theorem~\ref{thm:GR} that those tiling algebras not covered by Lemma~\ref{lem:Gorenstein} have Gorenstein dimension at most one.

\begin{corollary}
Let $T$ be a connected maximal rigid object in $\sB_m (A_n)$, $A_T$ its endomorphism algebra and $\sT$ the corresponding tiling.

If $n = 2$, then $Gdim (A_T) = 0$. If $n \geq 3$, then $Gdim (A_T) = \mathsf{max} \{k \mid k \geq 1 \text{ and } (\mcT_{k+1}) \text{ is a tile of } \sT\}$. In particular, $1 \leq Gdim (A_T) \leq m+2$. 
\end{corollary}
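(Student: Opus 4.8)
The plan is to read the statement off Lemma~\ref{lem:Gorenstein} together with the tile classification of Proposition~\ref{prop:possibletiles}, after disposing of the degenerate value $n=2$ by hand. Since $A_T\cong A_{\sT}$ by Proposition~\ref{prop:endtiling}, computing $Gdim(A_T)$ amounts to locating the longest \emph{open} tile of $\sT$, because it is exactly the open tiles of length $\geq 2$ that produce the gentle arrows feeding into Theorem~\ref{thm:GR}.

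For $n=2$ I would argue directly. As observed in the proof of Theorem~\ref{thm:maximalmrigid}, $\mcP_{2,m}$ carries no long $(m+1)$-diagonals, so every diagonal of $\sT$ is short; by Proposition~\ref{prop:m-rigidconditions} two short diagonals can neither cross nor be adjacent, hence two distinct ones share no vertex, and the connectedness of $T$ forces $\sT$ to have a single diagonal. Then $Q_{\sT}$ is one vertex with no arrows, $A_T\cong\kk$, and $Gdim(A_T)=0$.

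Assume now $n\geq 3$. From Proposition~\ref{prop:possibletiles} the open tiles are exactly those of types $(\mcT_k)$ with $1\leq k\leq m+3$ and $(\mcT'_1)$, where $(\mcT_k)$ has length $k$ and $(\mcT'_1)$ has length $1$; the closed tiles $(\mcT'_{m+3})$ do not enter Lemma~\ref{lem:Gorenstein}. Writing $L$ for the maximum length of an open tile and granting that $L\geq 2$, this maximum is necessarily realised by some $(\mcT_j)$ with $j\geq 2$ (as $(\mcT'_1)$ only contributes $1$), so $L=\max\{\,j\geq 2\mid (\mcT_j)\text{ is a tile of }\sT\,\}$. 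Lemma~\ref{lem:Gorenstein} then gives $Gdim(A_T)=L-1$, which after the substitution $k=j-1$ is precisely $\max\{\,k\geq 1\mid (\mcT_{k+1})\text{ is a tile of }\sT\,\}$; and since $2\leq L\leq m+3$ one reads off $1\leq Gdim(A_T)\leq m+2$.

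The hard part, and the step I expect to decide the whole statement, is the hypothesis $L\geq 2$: for $n\geq 3$ one must show that $\sT$ has an open tile with at least two bounding diagonals. The natural approach is contradiction via the dual tree of the dissection (nodes are tiles, edges are interior diagonals). If no open tile had length $\geq 2$, the leaves of this tree would be the length-one tiles $(\mcT_1),(\mcT'_1)$ and every internal node a closed $(\mcT'_{m+3})$-tile; for such a tiling all gentle arrows disappear, so Theorem~\ref{thm:GR} yields only $Gdim\leq 1$ (indeed the resulting algebra is self-injective, with $Gdim=0$). The decisive point is therefore to exclude, for $n\geq 3$, every \emph{all-length-one} tiling, using conditions~(1)–(3) of Theorem~\ref{thm:maximalmrigid}: condition~(1) pins short diagonals $\{w-m,w\}$ and $\{w',w'+m\}$ to the endpoints of each $(\mcT'_1)$-leaf, condition~(2) forbids adjacent short diagonals, and condition~(3) rules out long stretches of isolated vertices. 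I would try to show these constraints cannot be met simultaneously by a closed tile ringed by length-one leaves once $n\geq 3$; this combinatorial bookkeeping around a closed tile is where the real work lies, and it is the only place the hypothesis $n\geq 3$ is genuinely used. I flag it as the crux precisely because an apparent all-length-one configuration (a central $(\mcT'_{m+3})$-tile surrounded by alternating short and long length-one leaves) seems to satisfy (1)–(3) while giving $Gdim=0$, so either this exclusion must be carried through carefully or the lower bound must be read with the convention $\max\varnothing=0$.
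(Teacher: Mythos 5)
Your route is the paper's route: dispose of $n=2$ by hand (your argument there is essentially verbatim the paper's -- one short diagonal, $A_T$ of type $A_1$, self-injective, $Gdim=0$), then combine Proposition~\ref{prop:endtiling}, the tile list of Proposition~\ref{prop:possibletiles} and Lemma~\ref{lem:Gorenstein}, with the translation $Gdim(A_T)=L-1$ where $L$ is the maximal length of an open tile. But there is a genuine gap, and you have located it yourself: Lemma~\ref{lem:Gorenstein} only applies once one knows $L\geq 2$, i.e.\ that for $n\geq 3$ no connected maximal $m$-rigid object yields a tiling made solely of tiles of types $(\mcT_1)$, $(\mcT'_1)$ and $(\mcT'_{m+3})$. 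The paper asserts exactly this as a ``Note that'' in its proof, so the burden of proving it falls on you, and your proposal stops short -- worse, it ends by suggesting the exclusion might actually fail, offering as a possible counterexample a central $(\mcT'_{m+3})$-tile ``surrounded by alternating short and long length-one leaves'' and hedging with a $\max\varnothing=0$ convention. That configuration does not exist: by Lemma~\ref{lemma:cycles}(1) every diagonal bounding a closed tile is long, so a short diagonal cannot be glued to the central tile; and a short diagonal placed at an endpoint of one of the central tile's long diagonals lies in the same region as that long diagonal and shares a vertex with it, so the tile between them is bounded by at least two diagonals -- a tile of length $\geq 2$, destroying the all-length-one hypothesis (and in fact producing exactly the $(\mcT_2)$-tile that makes $Gdim\geq 1$).

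The exclusion itself can be closed in a few lines along the dual-tree idea you set up. First, for $n\geq 3$ an all-length-one tiling contains no short diagonal at all: a $(\mcT_1)$-tile is bounded by a short diagonal, which by Lemma~\ref{lemma:cycles}(1) cannot bound a closed tile, so in the all-length-one scenario it must be shared with another length-one tile; then the tiling consists of that single diagonal, and counting vertices ($m-1$ isolated on one side, $m-1$ or $2m$ on the other) gives $(m+1)(n+1)-2 \in \{2m,\, 3m+1\}$, i.e.\ $n\leq 2$. Second, since a maximal $m$-rigid object has at least one summand, the dual tree has at least two nodes, hence a leaf, which is a length-one tile and therefore of type $(\mcT'_1)$; but condition~(1) of Theorem~\ref{thm:maximalmrigid} applied to its open boundary of length $2m+1$ demands two short diagonals in the tiling -- contradicting the first point. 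With this inserted, your proof is complete and agrees with the paper's; without it, the central claim of the corollary (in particular the lower bound $Gdim(A_T)\geq 1$ for $n\geq 3$) is unproven.
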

\begin{proof}
Let $n = 2$. Then all summands of $T$ correspond to short arcs, and since there cannot be adjacent short arcs nor crossings, $T$ has only one summand, and so $A_T$ is of type $A_1$. In this case, $A_T$ is self-injective, and so its Gorenstein dimension is zero.

Now suppose $n \geq 3$. Note that no connected maximal $m$-rigid objects correspond to tilings consisting simply of tiles of types $(\mcT_1), (\mcT_1^\prime)$ and $(\mcT_{m+3}^\prime)$. The result then follows from Lemma \ref{lem:Gorenstein} and from the characterisation of the possible tiles. 
\end{proof}

\subsection{The AG-invariant of tiling algebras}

The AG-invariant, introduced in \cite{AG}, is a derived invariant for gentle algebras, and is computed combinatorially via the quiver and relations of the algebra. It consists of a set of ordered pairs generated by the algorithm which follows this preliminary definition. 

\begin{definition}
Let $H$ be a permitted thread. We define the {\it forbidden thread $F$ ending at $t(H)$ from the opposite direction to $H$} as follows:

\Case{1} $t(H)$ is the target of two arrows, $\alpha$ and $\beta$.

\noindent If $\alpha$ is the final arrow of $H$, then $F$ is the forbidden thread whose final arrow is $\beta$.

\Case{2} $t(H)$ is the target of at most one arrow $\alpha$.

\noindent If $H$ does not contain $\alpha$, then $F$ is the forbidden thread whose final arrow is $\alpha$. Otherwise, $F = f_{t(H)}$ (the trivial forbidden thread on $t(H)$). \vspace*{6pt}

Given a forbidden thread $F$, we define the {\it permitted thread $H$ starting at $s(F)$ in the opposite direction to $F$} in a similar manner. 
\end{definition}

We now state the AG-invariant algorithm:
\begin{compactenum}
\item Start with a permitted thread $H_0$. 
\item To each permitted thread $H_i$, let $F_i$ be the forbidden thread ending at $t(H_i)$ from the opposite direction to $H_i$. 
\item To each forbidden thread $F_i$, let $H_{i+1}$ be the permitted thread starting at $s(F_i)$ in the opposite direction to $F_i$. 
\item Continue in this manner until the first permitted thread appears again, i.e. $H_{a} = H_0$. This gives rise to a pair $(a,b)$, where $b$ is the number of arrows used in the forbidden threads $F_i$.
\item Repeat this procedure until every permitted thread has appeared.
\item For each oriented and relation-full cycle, associate a pair $(0,c)$, where $c$ is the length of the cycle.  
\end{compactenum}
We note that the AG-invariant of a gentle algebra is independent of the choices of (initial) permitted threads made in the algorithm.

\begin{remark}\label{rmk:AG}\cite[Remark 8]{AG}
Let $\sG$ be a gentle algebra with AG-invariant $AG (\sG)\coloneqq \{(a_1,b_1), \ldots, (a_k,b_k)\}$. Then $\sum\limits_{j=1}^k a_j$ is the number of permitted threads, and coincides with the number of open forbidden threads. Moreover, $\sum\limits_{j=1}^k b_j$ is the number of arrows of the quiver of $G$. 
\end{remark} 

Recall that permitted threads of a tiling algebra correspond to fans in the tiling (see Lemma \ref{lem:fan-tiles}). Given a fan at a marked point $v$, we shall denote the corresponding permitted thread by $p_v$. Recall also that forbidden threads come from tiles. 

We will now describe the AG-invariant of a tiling algebra in terms of the tiling. Similar ideas were used in \cite{DR,DRS}, where $(m+2)$-angulations and surface algebras (special cases of tiling algebras when the surface is a disc) were considered. But we include the proof for the convenience of the reader.  

\begin{lemma}\label{lem:oneorbit}
Let $A_{\sT}$ be the tiling algebra of a tiling $\sT$. Then, all the permitted threads of $A_{\sT}$ appear in a single orbit arising from the AG-invariant algorithm. In particular, there is precisely one ordered pair $(a,b)$ for which $a>0$. 
\end{lemma}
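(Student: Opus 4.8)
The plan is to reinterpret the AG-invariant algorithm geometrically on the tiling and show that one full cycle of the procedure (permitted thread $\to$ forbidden thread $\to$ next permitted thread) merely advances from a marked point to the next non-isolated marked point around the boundary circle of $\mcP$. Since the boundary of a disc is a single circle, this forces all permitted threads to lie in one orbit.

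By Lemma~\ref{lem:fan-tiles} the permitted threads of $A_\sT$ are in bijection with the non-isolated marked points of $\mcP$; write $p_v$ for the permitted thread (fan) at such a point $v$, with diagonals $d_1 \to d_2 \to \cdots \to d_r$ listed in clockwise rotation around $v$ (with respect to the fixed orientation convention). The algorithm induces a permutation $\pi$ on the set of permitted threads, and its orbits correspond exactly to the pairs $(a,b)$ with $a>0$, while closed cycles account for the pairs $(0,c)$. It therefore suffices to prove that $\pi$ is a single cycle.

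The core is the analysis of one local step. The target $t(p_v)=d_r$ is the most clockwise diagonal at $v$, so the sector lying immediately clockwise past $d_r$ at $v$ is bounded by $d_r$ and a boundary segment of $\mcP$; hence the tile $\mcT$ on that side has $v$ on its open boundary and $d_r$ as an outer diagonal. In particular $\mcT$ is \emph{open} (its sector at $v$ meets $\partial\mcP$), so the forbidden thread $F$ ending at $d_r$ from the direction opposite to $p_v$ is the open forbidden thread of $\mcT$, and $s(F)$ is the other outer diagonal of $\mcT$; this also explains why the orbit uses only open forbidden threads, matching Remark~\ref{rmk:AG}. The next permitted thread $H'=\pi(p_v)$ starts at $s(F)$ in the opposite direction to $F$, which is precisely the fan $p_{v'}$ at the other endpoint $v'$ of the open boundary of $\mcT$ (indeed $s(F)$ is the most counterclockwise diagonal at $v'$, so $s(p_{v'})=s(F)$). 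Since the open boundary of $\mcT$ runs clockwise along $\partial\mcP$ from $v$ to $v'$, covering only isolated marked points in between, the point $v'$ is the next non-isolated marked point clockwise from $v$.

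Thus $\pi$ realises the cyclic ``next non-isolated marked point'' map on the boundary circle of $\mcP$. As $\partial\mcP$ is connected and there is at least one non-isolated marked point, this map is a single cycle; hence every permitted thread occurs in one orbit, and there is exactly one pair $(a,b)$ with $a>0$. I expect the main obstacle to be exactly this local step: one must verify, running through the case analysis in the definition of the forbidden (resp. permitted) thread ``from the opposite direction'' — target with one or two incoming arrows, trivial threads at valency-one vertices, and the degenerate tiling with a single diagonal covered by Lemma~\ref{lem:fan-tiles}(2) — that the algorithm always selects the tile clockwise of $d_r$ and lands on the far end of its open boundary, the openness of that tile being guaranteed precisely because $d_r$ is the last diagonal of the fan.
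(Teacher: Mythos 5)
Your proof is essentially correct, but it takes a genuinely different route from the paper. The paper argues by induction on the number of tiles: it picks a forbidden thread $f_1$ with tile $\mcR$, cuts the disc along the diagonals of $\mcR$ into sub-tilings $\overline{\sR}_i$, applies the induction hypothesis to each, and splices the resulting orbits into a single orbit for $\sT$. You instead give a direct, non-inductive argument: the algorithm step $H_i \mapsto F_i \mapsto H_{i+1}$, read on the tiling via Lemma~\ref{lem:fan-tiles}, is the map sending a non-isolated marked point to the adjacent non-isolated marked point along $\partial\mcP$, and a consistent rotation of a circle is a single cycle. This is closer in spirit to the boundary-walk computations of \cite{DR} for $(m+2)$-angulations, and it buys a cleaner conceptual picture (it simultaneously explains why only open forbidden threads occur in the orbit, consistent with Remark~\ref{rmk:AG}), whereas the paper's induction requires the somewhat delicate bookkeeping of how the threads $H^i_j$, $F^i_j$ of the sub-tilings assemble, including the open/closed dichotomy for $\mcR$. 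Your local step does check out under the paper's conventions, including the degenerate cases (trivial permitted/forbidden threads at valency-one vertices, a tile bounded by a single diagonal, the one-diagonal tiling of Lemma~\ref{lem:fan-tiles}(2)); the key point, which you correctly isolate, is that the forbidden thread of the open tile $\mcT$ clockwise past $d_r$ at $v$ ends \emph{at} $d_r$ with final arrow at the far endpoint $w$ of $d_r$ (distinct from the last arrow of $p_v$, which sits at $v$), so the ``opposite direction'' rule in the definition always selects exactly this tile.

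One concrete slip, harmless to the conclusion: the direction of travel is reversed. With the paper's orientation (arrows given by clockwise rotation), the open boundary of the tile $\mcT$ clockwise past $d_r$ runs clockwise from $v'$ \emph{to} $v$, not from $v$ to $v'$; e.g.\ in a pentagon with diagonals $\{1,3\},\{1,4\}$ the orbit visits the marked points $1 \to 4 \to 3 \to 1$. So your permutation $\pi$ is ``next non-isolated marked point \emph{counterclockwise}''; since a consistent rotation in either direction is a single cycle, the lemma follows exactly as you say, but you should correct the orientation in the local analysis for the step to match the definition of the tiling algebra.
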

\begin{proof}
We shall prove this lemma by induction on the number of tiles of $\sT$. If $\sT$ has two tiles, then $A_{\sT}$ is of Dynkin type $A_1$, and the lemma is obvious.

Assume that the lemma holds for any tiling algebra $A_{\sT^\prime}$ whose tiling $\sT^\prime$ has $r$ tiles.

Now, suppose $\sT$ has $r+1$ tiles, and let $\alpha_1$ be an arrow of $A_\sT$. We know that there is exactly one permitted thread $p_1$ and exactly one forbidden thread $f_1 \in \cF_{A_\sT}$ passing through $\alpha_1$. Suppose, without loss of generality, that $s(\alpha_1) = s(f_1)$ and write $f_1 = \alpha_1 \alpha_2\ldots \alpha_{k}$. Let $\mcR$ be the corresponding tile. Let $\{v_i,v_{i+1}\}$ be the diagonal corresponding to $s(\alpha_i)$, for each $i = 1, \ldots, k$ (noting of course that $v_{k+1} = v_1$ if $\mcR$ is a closed tile). Denote the region bounded by the diagonal $\{v_i, v_{i+1}\}$, but that does not contain $\mcR$, by $\sR_i$. Note that we can see each region $\sR_i$, together with the diagonal $\{v_i,v_{i+1}\}$, as a tiling of a marked disc with fewer marked points. We can then modify this tiling by adding a new boundary arc, joining $v_i$ and $v_{i+1}$, such that the original diagonal $\{v_i, v_{i+1}\}$ becomes an interior arc. Denote this modified tiling by $\overline{\sR}_i$, and the corresponding tiling algebra by $A_{\overline{\sR}_i}$

We can then write $p_1 = p \alpha_1 p^\prime$, where $p$ is the permitted thread of $A_{\overline{\sR}_1}$ corresponding to the fan at $v_{2}$ in $\overline{\sR}_1$, and $p^\prime$ is the permitted thread of $A_{\overline{\sR}_{2}}$ corresponding to the fan of $v_{2}$ in $\overline{\sR}_{2}$.  

By the induction hypothesis, we get a single orbit with all the permitted threads of $A_{\overline{\sR}_{2}}$, which can be written as:
$H^2_0 = p^\prime, \ldots, H^2_{\ell_2} = p_{v_{3}} \text{(in $A_{\overline{\sR}_2}$)}, H^2_{\ell_2+1} = H^2_0$, and where $F^2_{\ell_2} = f_{s(p^\prime)}$. (We observe that the trivial forbidden thread $f_{s(p^\prime)}$ exists in $A_{\overline{\sR}_2}$, but not in $A_\sT$.)

Now, in $A_\sT$, the orbit starting at $H_0 = p_1$ is such that $F_0 = F_0^2$ and $H_j= H^2_j, F_j = F^2_j$, for $1 \leq j \leq \ell_2-1$ (and hence contains all of the permitted threads of $A_{\overline{\sR}_2}$ that arise in $\sT$). In addition, we have $H_{\ell_2} = H^2_{\ell_2} \alpha_2 H^3_0$, where $H^3_0$ is the permitted thread in $A_{\overline{\sR}_3}$ corresponding to the fan at $v_3$. With successive applications of the induction hypothesis for $A_{\overline{\sR}_3}, \ldots , A_{\overline{\sR}_{k-1}}$, it follows that we can continue this orbit up to and including $H_{\ell_2 + \ldots + \ell_{k-1}} = H^{k-1}_{\ell_{k-1}} \alpha_{k-1} H^k_0$ (by which point all of the permitted threads from $A_{\overline{\sR}_2}, \ldots , A_{\overline{\sR}_{k-1}}$ which appear in $\sT$ have already arisen), where $H^k_0$ is the permitted thread in $A_{\overline{\sR}_k}$ corresponding to the fan at $v_k$.
 
By a further application of the induction hypothesis, we have that all of the permitted threads $A_{\overline{\sR}_{k}}$ occur in a single orbit, which can be written as $H^k_0, \ldots, H^k_{\ell_k} = p_{v_{k+1}} \text{(in $A_{\overline{\sR}_k}$)}, H^k_{\ell_k+1} = H^k_0$, and where $F^k_{\ell_k} = f_{s(H^k_0)}$.

Now, if $\mcR$ is open, then the fan at $v_{k+1}$ in $\overline{\sR}_k$ is the same as the fan at $v_{k+1}$ in $\sT$, and so $H^k_{l_k}$ is a permitted thread of $A_T$. In fact, $H_{\ell_2 + \ldots + \ell_k} = H^k_{\ell_k}$. We then have $F_{\ell_2 + \ldots + \ell_k} = f_1$, and that $H_{\ell_2 + \ldots + \ell_k +1}$ is the permitted thread of $v_1$ in $\sT$, which coincides with the permitted thread of $v_1$ in $\overline{\sR}_1$. With a final application of the induction hypothesis, we then get all the permitted threads of $A_\sT$ lying in $\overline{\sR}_1$, finishing with the permitted thread of $v_2$ in $\sT$, which is again our initial permitted thread $H_0$. This completes our orbit. The orbit we have obtained contains all of the permitted threads in $\sT$, as required, as it contains all of the permitted threads in $\sT$ that appear in $A_{\overline{\sR}_1}, \ldots , A_{\overline{\sR}_{k}}$, as well as each permitted thread in $\sT$ containing $\alpha_i$ for $i = 1, \ldots, k$.

If $\mcR$ is closed, then $H_{\ell_2 + \ldots + \ell_k} = H^k_{\ell_k} \alpha_k p_{v_1}$, where $p_{v_1}$ is the permitted thread corresponding to $v_1$ in $\overline{\sR}_1$. Again, with a final application of the induction hypothesis, we then get all the permitted threads of $A_\sT$ lying in $\overline{\sR}_1$, finishing with the permitted thread of $v_2$ in $\sT$. We have thereby again obtained a single orbit which contains all of the permitted threads of $\sT$.
\end{proof}

The proofs of the following corollaries follow immediately from Lemma \ref{lem:oneorbit} and Remark \ref{rmk:AG}.

\begin{corollary}
Let $A_\sT$ be the tiling algebra of a tiling $\sT$. The AG-invariant of $A_\sT$ is as follows:
\begin{compactenum}
\item There is an ordered pair $(0,c)$ in AG($A_\sT$) if and only if there is a closed tile in $\sT$ with length $c$.
\item There is only one ordered pair $(a,b)$ in AG($A_\sT$) with $a \neq 0$. Moreover, $a$ is the number of open tiles in $\sT$, and
$$b = \sum\limits_{\mcT \text{ open tile in $\sT$}} (\ell (\mcT) - 1) = -a + \sum\limits_{\mcT \text{ open tile in $\sT$}} \ell (\mcT).$$
\end{compactenum}
\end{corollary}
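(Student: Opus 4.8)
The plan is to read off both assertions directly from the AG-invariant algorithm, using Lemma~\ref{lem:oneorbit}, Remark~\ref{rmk:AG}, and the dictionary between tiles and threads that is implicit in the tiling algorithm and made explicit in Lemma~\ref{lem:fan-tiles}. Throughout I will use that the output of the tiling algorithm exhibits the closed forbidden threads as oriented relation-full cycles (Proposition~\ref{prop:tilinggentle}) and, from the initial step of that algorithm, the numerical correspondence between a tile $\mcT$ and its forbidden thread: an \emph{open} tile of length $\ell(\mcT)$ arises from an open forbidden thread of length $\ell(\mcT)-1$, while a \emph{closed} tile of length $\ell(\mcT)$ arises from a closed forbidden thread (a cycle) of length $\ell(\mcT)$.

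For part (1), I would observe that pairs with first entry $0$ are produced \emph{only} by the last step of the AG-invariant algorithm, which assigns a pair $(0,c)$ to each oriented relation-full cycle, with $c$ the length of that cycle. In a tiling algebra these cycles are exactly the closed forbidden threads, hence are in bijection with the closed tiles, and the cycle length equals the number of bounding diagonals, i.e.\ $\ell(\mcT)$. Thus $(0,c)\in \mathrm{AG}(A_\sT)$ if and only if $\sT$ has a closed tile of length $c$, which is (1).

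For part (2), uniqueness of the pair $(a,b)$ with $a\neq 0$ is already furnished by Lemma~\ref{lem:oneorbit}; it remains to compute $a$ and $b$. By Remark~\ref{rmk:AG}, $\sum_j a_j$ equals the number of permitted threads, which in turn equals the number of open forbidden threads; since each pair $(0,c)$ contributes $0$ to this sum, $a$ itself equals the number of open forbidden threads, and by the tiling algorithm (together with Lemma~\ref{lem:fan-tiles}) this is the number of open tiles. For $b$, the cleanest route is to note that the single orbit contains $a$ forbidden threads $F_0,\ldots,F_{a-1}$, which therefore run exactly once over all the open forbidden threads; as each open forbidden thread corresponds to an open tile $\mcT$ and uses $\ell(\mcT)-1$ arrows, summing gives
\[
b \;=\; \sum_{\mcT \text{ open}} (\ell(\mcT)-1).
\]
The second equality in the statement then follows since $a$ equals the number of open tiles. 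Alternatively, one may compute $b$ as the total number of arrows $\sum_j b_j$ from Remark~\ref{rmk:AG} minus the arrows lying in the closed cycles, namely $\sum_{\mcT \text{ closed}} \ell(\mcT)$.

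The only point requiring care --- the ``main obstacle'', though it is bookkeeping rather than conceptual --- is to pin down the exact numerical dictionary between a tile of length $\ell$ and its forbidden thread (length $\ell-1$ if open, $\ell$ if closed) and to verify that the arrows of $Q_\sT$ are partitioned among the threads of $\cF_{\sG}$ with each arrow used exactly once. Granting this, the two sums match the arrow counts coming from the initial step of the tiling algorithm, and both parts of the corollary follow at once.
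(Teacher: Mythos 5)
Your proposal is correct and follows essentially the same route as the paper, which simply notes that the corollary is immediate from Lemma~\ref{lem:oneorbit} and Remark~\ref{rmk:AG}; your added bookkeeping (the tile--thread length dictionary from the tiling algorithm, and the fact that each arrow lies in exactly one thread of $\cF_{\sG}$, as observed in the proof of Lemma~\ref{lem:2forbidden}) just makes explicit what the paper leaves implicit.
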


\begin{corollary}
Let $A$ be the endomorphism algebra of a connected maximal $m$-rigid object in $\sB_m (A_n)$, and $\sT$ the corresponding tiling. Let $n_k, n^\prime_1, n^\prime_{m+3}$ be the number of tiles in $\sT$ of types $(\mcT_k), (\mcT_1^\prime), (\mcT^\prime_{m+3})$, respectively, where $1 \leq k \leq m+3$. 

Then the AG-invariant of $A$ is given by a sequence of $n^\prime_{m+3}$ ordered pairs of the form $(0,m+3)$, together with an ordered pair $(a,b)$, with $a= \sum\limits_{k=1}^{m+3} n_k + n^\prime_1$, and $b = \sum\limits_{k=1}^{m+3} (k-1)n_k$.
\end{corollary}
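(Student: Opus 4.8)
The plan is to specialise the preceding corollary — which reads off the AG-invariant of an arbitrary tiling algebra $A_{\sT}$ from the closed and open tiles of $\sT$, and which itself rests on Lemma~\ref{lem:oneorbit} and Remark~\ref{rmk:AG} — to the particular tiling $\sT$ attached to $A$. The only inputs needed are the list of tile types that can occur and, for each type, whether it is open or closed together with its length $\ell(\mcT)$. By Theorem~\ref{thm:maximalmrigid} (via Proposition~\ref{prop:possibletiles}), the tiling $\sT$ of a connected maximal $m$-rigid object is built from tiles of types $(\mcT_k)$ with $1 \le k \le m+3$, $(\mcT_1')$ and $(\mcT'_{m+3})$. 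So I would first record, directly from Proposition~\ref{prop:possibletiles}, that a tile of type $(\mcT_k)$ has length $\ell(\mcT) = k$, that a tile of type $(\mcT_1')$ has length $\ell(\mcT) = 1$, and that a tile of type $(\mcT'_{m+3})$ has length $m+3$.

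Next I would separate the closed tiles from the open ones by inspecting the open-boundary lengths listed in Proposition~\ref{prop:possibletiles}. Every tile of type $(\mcT_k)$ (for $1 \le k \le m+3$) and of type $(\mcT_1')$ has strictly positive open boundary, hence is open, whereas $(\mcT'_{m+3})$ is the unique type with no open boundary, hence the only closed type. Applying part~(1) of the preceding corollary, each of the $n'_{m+3}$ closed tiles has length $m+3$ and therefore contributes a pair $(0,m+3)$; since these are the only closed tiles, the AG-invariant contains exactly $n'_{m+3}$ pairs of the form $(0,m+3)$ and no other pair $(0,c)$.

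Finally I would apply part~(2) of the preceding corollary to the open tiles, which produces the single pair $(a,b)$ with $a \ne 0$. Here $a$ is the number of open tiles, namely $a = \sum_{k=1}^{m+3} n_k + n'_1$, and $b = \sum_{\mcT \text{ open}} (\ell(\mcT)-1)$. Substituting the lengths recorded above, each tile of type $(\mcT_k)$ contributes $k-1$ while each tile of type $(\mcT_1')$ contributes $\ell(\mcT)-1 = 0$, so the $n'_1$ term drops out and $b = \sum_{k=1}^{m+3}(k-1)n_k$, as claimed. The argument is essentially bookkeeping; the only points requiring care are the correct identification of the tile lengths and of $(\mcT'_{m+3})$ as the unique closed tile type — so that all $(0,c)$ pairs collapse to $(0,m+3)$ — and the observation that $(\mcT_1')$, despite carrying $2m$ isolated vertices, has length one and hence contributes nothing to $b$.
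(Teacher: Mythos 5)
Your proposal is correct and follows essentially the same route as the paper, which states that this corollary follows immediately from Lemma~\ref{lem:oneorbit} and Remark~\ref{rmk:AG} (i.e.\ by specialising the general tiling-algebra corollary using the tile classification of Proposition~\ref{prop:possibletiles}). Your explicit bookkeeping — identifying $(\mcT'_{m+3})$ as the unique closed type of length $m+3$, and noting that $(\mcT'_1)$ counts toward $a$ but contributes $\ell(\mcT)-1=0$ to $b$ — is exactly the verification the paper leaves implicit.
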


In \cite{DR}, the description of the AG-invariant of the algebras considered was via marked points and the lengths of the open boundaries. In our set-up, we cannot describe the AG-invariant in terms of lengths of open boundaries, because different tiles, which give rise to different numbers of arrows, can have open boundaries with the same length.

\subsection*{Acknowledgements}
The first author would like to thank Funda\c{c}\~ao para a Ci\^encia e Tecnologia, for their financial support through Grant SFRH/BPD/90538/2012. The second author gratefully acknowledges support by the Austrian Science Fund (FWF): Project No.\ P25141-N26, and NAWI Graz. The authors also respectively thank the University of Graz and the University of Lisbon for their kind hospitality during research visits.




\begin{thebibliography}{99}

\bibitem{ABCP}
Assem, I., Br\"ustle, Charbonneau-Jodoin, G., Plamondon, P-G., Gentle algebras arising from surface triangulations; {\it Algebra and Number Theory} {\bf 4} (2010), no. 2, 201--229.

\bibitem{AG}
Avella-Alaminos, D., Geiss, C., Combinatorial derived invariants for gentle algebras; {\it J. Pure Appl. Algebra} {\bf 212} (2008), no. 1, 228--243.

\bibitem{BM}
Baur, K., Marsh, R., A geometric construction of $m$-cluster categories; {\it Trans. Amer. Math. Soc.} {\bf 360} (2008), 5789--5803.


\bibitem{BMRRT}
Buan, A. B., Marsh, R., Reineke, M., Reiten, I., Todorov, G.; Tilting theory and cluster combinatorics, {\it Adv. Math.} {\bf 204} (2006), 572--618. 

\bibitem{CCS}
Caldero, P., Chapoton, F., Schiffler, R., Quivers with relations arising from clusters ($A_n$ case); {\it Trans. Amer. Math. Soc.} {\bf 358} (2006), 1347--1364. 


\bibitem{CS11}
Coelho Sim\~oes, R., Maximal rigid objects as noncrossing bipartite graphs; {\it Algebr. Represent. Theory} {\bf 16} (2013), 1243--1272. 

\bibitem{CS13}
Coelho Sim\~oes, R., Hom-configurations in triangulated categories generated by spherical objects; {\it J. Pure Appl. Algebra} {\bf 219} (2015), 3322--3336. 

\bibitem{CSP}
Coelho Sim\~oes, R., Pauksztello, D., Torsion pairs in a triangulated category generated by a spherical object; \emph{J. Algebra} {\bf 448} (2015), 3322--3336. 

\bibitem{DR}
David-Roesler, L., The AG-invariant for $(m+2)$-angulations; preprint \harxiv{1210.6087}.

\bibitem{DRS}
David-Roesler, L., Schiffler, R., Algebras from surfaces without punctures; {\it J. Algebra} {\bf 350} (2012), 218--244. 

\bibitem{Demonet}
Demonet, L., Algebras of partial triangulations; preprint \harxiv{1602.01592}.

\bibitem{Fernandez}
Fern\'andez, E., Extensiones triviales y \'algebras inclinadas iteradas, PhD thesis, Universidad Nacional del Sur, Argentina, 1999.

\bibitem{GR}
Geiss, C., Reiten, I., Gentle algebras are Gorenstein; {\it Representations of algebras and related topics}, 129--133, Fields Inst. Commun. \textbf{45}, Amer. Math. Soc., Providence, RI, 2005. 

\bibitem{Murphy}
Murphy, G., Derived equivalence classification of $m$-cluster tilted algebras of type $A_n$; {\it J. Algebra} {\bf 323} (2010), 920--965.

\bibitem{S14}
Schroll, S., Trivial extensions of gentle algebras and Brauer graph algebras; {\it J. Algebra} {\bf 444} (2015), 183--200. 

\bibitem{Thomas}
Thomas, H., Defining an $m$-cluster category, {\it J. Algebra} {\bf 318} (2007), no. 1, 37--46. 
\end{thebibliography}
\end{document}